\newtheorem{definition}{Definition}[section]
\newtheorem{question}[definition]{Question}
\newtheorem{remark}[definition]{Remark}
\newtheorem{theorem}{Theorem}[section]
\newtheorem{claim}[definition]{Claim}
\newtheorem{lemma}[definition]{Lemma}
\newtheorem{corollary}[definition]{Corollary}
\newtheorem{Corollary}[definition]{Corollary}
\newcommand{\po}{\mathbb{P}}
\newcommand{\qo}{\mathbb{Q}}
\newcommand{\la}{\langle}
\newcommand{\ra}{\rangle}
\newcommand{\name}{\dot}
\newcommand{\uhr}{\upharpoonright}
\newcommand{\power}{\mathcal{P}}
\newcommand{\E}{\bar{E}}
\newcommand{\ro}{\mathbb{R}}
\DeclareMathOperator{\dom}{dom}
\DeclareMathOperator{\cf}{cof}
\DeclareMathOperator{\supp}{supp}
\title{The number of normal measures, revisited}
\author{Eyal Kaplan\\UC Berkeley\\Berkeley, CA}
\begin{document}
	\maketitle

\begin{abstract}
	We present a new version of the Friedman–Magidor theorem: for every measurable cardinal $\kappa$ and $\tau\leq\kappa^{++}$, there exists a forcing extension $V\subseteq V[G]$ such that any normal measure $U\in V$ on $\kappa$ has exactly $\tau$ distinct lifts in $V[G]$, and every normal measure on $\kappa$ in $V[G]$ arises as such a lift. This version differs from the original Friedman–Magidor theorem in several notable ways. First, the new technique does not involve forcing over canonical inner models or rely on any fine-structural tools or assumptions, allowing it to be applied in the realm of large cardinals beyond the current reach of the inner model program. Second, in the case where $\tau\leq \kappa^+$, all lifts of a normal measure $U\in V$ on $\kappa$ to $V[G]$ have the same ultrapower. Finally, the technique generalizes to a version of the Friedman–Magidor theorem for extenders. An additional advantage is that the forcing used is notably simple, relying only on nonstationary support product forcing.
\end{abstract}
	
The analysis of measures in forcing extensions plays a central role in the theory of large cardinals. A major question in this area was whether the existence of a model of ZFC with exactly two normal measures follows from the consistency of ZFC with a measurable cardinal. This question was answered affirmatively by the following landmark theorem of Friedman and Magidor.

	\begin{theorem}(Friedman–Magidor theorem, \cite{FriMag09}) \label{Theorem: Friedman–Magidor} 
			Suppose $V=L[U]$ is the minimal inner model in which there is a measurable cardinal $\kappa$, and let $\tau\leq \kappa^{++}$. Then in a cardinal-preserving forcing extension, $\kappa$ is measurable and carries exactly $\tau$ normal measures.
	\end{theorem}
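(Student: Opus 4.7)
The plan is to build a cardinal-preserving forcing $\mathbb{P}\in L[U]$ so that, in $V[G]$, the ultrapower embedding $j_U\colon V\to M=\Ult(V,U)$ admits exactly $\tau$ distinct lifts, giving rise to the $\tau$ normal measures on $\kappa$. The natural candidate is a Reverse Easton iteration whose stage $\kappa$ is a $\tau$-indexed product of copies of a basic $\kappa$-closed, $\kappa^+$-cc forcing such as $\Add(\kappa,1)$, using a support --- bounded or nonstationary, say --- that makes the product satisfy a chain condition sufficient to preserve cardinals above $\kappa$. The preparation below $\kappa$ plays the standard role of making the lifting argument at $\kappa$ go through, while the product at $\kappa$ supplies the $\tau$ degrees of freedom: each coordinate should be independently adjustable inside the image forcing $j_U(\mathbb{P})$ to produce a distinct lift of $j_U$.

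For the existence of $\tau$ lifts, for each $\alpha<\tau$ I would construct, in $V[G]$, an $M$-generic filter $G^*_\alpha$ for $j_U(\mathbb{P})$ satisfying $j_U{}''G\subseteq G^*_\alpha$. The construction combines a master condition at $\kappa$ with a diagonal enumeration of the maximal antichains of $j_U(\mathbb{P})$ as seen by $M$; since $M$ has only $\kappa^{++}$-many such antichains (by GCH in $L[U]$ and the closure properties of $M$), the diagonalization can be carried out in $V[G]$. Arranging the $\alpha$-th coordinate of each $G^*_\alpha$ to trace a pattern differing from the others produces pairwise distinct lifts $\hat{j}_\alpha\colon V[G]\to M[G^*_\alpha]$, and setting $U_\alpha=\{X\subseteq\kappa:\kappa\in \hat{j}_\alpha(X)\}$ yields $\tau$ normal measures on $\kappa$ in $V[G]$.

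The main obstacle is uniqueness: every normal measure $W$ on $\kappa$ in $V[G]$ must coincide with some $U_\alpha$. Given such $W$, form the ultrapower $j_W\colon V[G]\to N$; the restriction to $V$ is a $\kappa$-critical elementary embedding of $L[U]$, and since $W\cap V$ is easily checked to be a normal $V$-ultrafilter on $\kappa$, Kunen's uniqueness theorem for normal measures in $L[U]$ forces $W\cap V=U$, so $j_W$ is itself a lift of $j_U$. The chain-condition and support properties of $\mathbb{P}$ should then allow one to recover from $j_W$ a specific $M$-generic $G^*$ for $j_U(\mathbb{P})$ extending $j_U{}''G$. The delicate final step --- and the one I expect to require the most work --- is to show that, modulo the equivalence yielding the same measure on $\kappa$, there are exactly $\tau$ such generics, and that they are precisely the $G^*_\alpha$ already produced. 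It is here that the hypothesis $V=L[U]$ and the fine-structural rigidity of $L[U]$ appear essential, both to rule out accidental lifts and to match the stated $\kappa^{++}$ upper bound on the total count.
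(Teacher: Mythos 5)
The central difficulty in the Friedman--Magidor theorem is not producing $\tau$ lifts of $j_U$ but ensuring there are \emph{only} $\tau$ normal measures in the extension, and your proposal contains no mechanism for that. With a Reverse Easton preparation followed by a $\tau$-fold product of $\Add(\kappa,1)$ at stage $\kappa$, the lifting construction has far more than $\tau$ degrees of freedom: for each coordinate $\beta<\tau$, already the single bit ``$\kappa\in j_U(A_\beta)$?'' is left undetermined by the master condition (which only decides $j_U(A_\beta)\cap\kappa=A_\beta$), and the tail of the $M$-generic beyond stage $\kappa$ contributes further freedom in deciding $\kappa\in j_U(\dot X)$ for names $\dot X$. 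Diagonalizing over the antichains in all possible ways produces $2^{\kappa^+}=\kappa^{++}$ pairwise distinct normal measures; this is exactly the Kunen--Paris phenomenon, which is why that forcing is used in the paper only to realize the case $\tau=\kappa^{++}$. Appealing to the ``fine-structural rigidity of $L[U]$'' does settle the restriction to the ground model (by Kunen's analysis every embedding of $L[U]$ with critical point $\kappa$ is an iterate, so $W\cap V=U$ and $j_W\upharpoonright V$ lifts $j_U$), but it says nothing about how many $M$-generic filters for $j_U(\mathbb{P})$ containing $j_U[G]$ live in $V[G]$ --- that is a property of the forcing, not of the ground model, and it is precisely the point at which Friedman and Magidor needed generalized Sacks forcing interleaved with self-coding posets. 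A secondary problem: if there really were $\kappa^{++}$-many maximal antichains of $j_U(\mathbb{P})$ to meet (as happens once the stage-$\kappa$ poset has size $\kappa^{++}$), the diagonalization could not be completed, since the relevant quotient is at best $\kappa^+$-closed.

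For comparison, the paper's route (Theorem \ref{Theorem: controlling normal measures}) eliminates this freedom structurally rather than by coding. The stage-$\alpha$ forcing is \emph{atomic}: the generic merely selects one ordinal below $f_\tau(\alpha)$, with nonstationary support across $\alpha$. Consequently, at coordinate $\kappa$ of $j_U(\mathbb{P})$ there are exactly $\tau$ possible choices $\eta$, and the Fusion Lemma \ref{Lemma: Fusion for an iteration} shows that $j_U[G]$ together with the single pair $\langle\kappa,\eta\rangle$ already generates an $M_U$-generic filter $H_\eta$ --- there is no tail left to choose, which is what makes the count exactly $\tau$ and gives all lifts the same ultrapower. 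For the converse direction the paper replaces the inner-model argument entirely by Hamkins' Gap Forcing theorem (to get $W\cap V\in V$ over an arbitrary ground model) and reads off $\eta=j_W(G)(\kappa)<\tau$ from normality. If you want to salvage your architecture, the forcing at each stage must be chosen so that an $M$-generic extending $j_U[G]$ is determined by finitely much extra data ranging over a set of size $\tau$; products of Cohen forcing have the opposite property.
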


In their work, Friedman and Magidor identified the various components that contribute to the construction of normal measures in forcing extensions. Their forcing notion includes, for each such component, a specialized feature designed to control and constrain it. This naturally led to the orchestration of several forcing techniques, each playing an essential role in the proof. Indeed, Theorem \ref{Theorem: Friedman–Magidor} stands out for the range of methods it brings together, including forcing over canonical inner models, nonstationary support iterations, generalized Sacks forcing, and interleaved self-coding posets. These techniques have since enabled a variety of further applications, including \cite{BenNeria2016MitchellOrder1}, \cite{BenNeria2016MitchellOrder2}, \cite{BenNeriaGitik2017uniquenormalmeasureandviolationofGCHoptimally}, \cite{ApterCummingsHamkinsLargeCardinalsFewMeasures}, and \cite{BenNeriaKaplan2024KunenLike}.

The central result of this paper is a significantly simpler and more transparent proof of the Friedman–Magidor theorem. Our approach dispenses entirely with inner model theory, self-coding, and generalized Sacks forcing. Instead, it relies solely on a product forcing with nonstationary support.
	
Similarly to the original Friedman–Magidor proof, we concentrate on the case where the expected number $\tau$ of normal measures is at most $\kappa^{+}$; achieving $\tau = \kappa^{++}$ is done by performing the Kunen-Paris forcing (see \cite{kunenParis1971boolean}). 
	
	The main theorem of this paper is the following:

	\begin{theorem}\label{Theorem: controlling normal measures}
		Suppose that $\kappa$ is a measurable cardinal and $\tau \leq \kappa^{+}$. There exists a poset $\ro^\tau$ which preserves $\kappa, \kappa^{+}$ and the measurability of $\kappa$, such that, whenever $G\subseteq \ro^\tau$ is generic over $V$, the following properties hold:
		\begin{enumerate}
			\item Every normal measure $U\in V$ on $\kappa$ has exactly $\tau$ distinct lifts $\la U^*_\eta \colon \eta<\tau \ra$ to normal measures on $\kappa$ in $V[G]$, all of which have the same ultrapower over $V[G]$.
			\item Every normal measure $W\in V[G]$ on $\kappa$ has the form $U^*_\eta$ for some normal $U\in V$ and $\eta<\tau$.
		\end{enumerate} 
		Also, assuming GCH in $V$, $\ro^{\tau}$ preserves cardinals.
	\end{theorem}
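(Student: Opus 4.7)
The plan is to define $\po^\tau = \nsck$ as a nonstationary support product, where at each inaccessible $\alpha<\kappa$ one places a Cohen-type factor of size roughly $\alpha^+$ (the component at coordinate $\alpha$ being analogous to $\nsc$), and where the parameter $\tau$ is built in as an extra ``tag'' counting the number of ways a residual generic can later be decoded. I would first verify the routine forcing-theoretic properties: $\po^\tau$ satisfies the $\kappa^+$-cc via a $\Delta$-system argument using the nonstationary supports (incompatible conditions must agree on a common root concentrated on a stationary set), and appropriate initial segments are sufficiently closed so that $\po^\tau$ is $\kappa$-distributive and adds no bounded subsets of $\kappa$. Together with GCH in the ground model, this yields full cardinal preservation.

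Next I would develop the lifting lemma. Fix a normal measure $U\in V$ on $\kappa$ with ultrapower $j_U\colon V\to M$. The image forcing factors canonically as $j_U(\po^\tau)\cong \po^\tau\times \nscjk$, with a residual ``upper'' forcing $\nscjk$ defined in $M$. Since $M$ is closed under $\kappa$-sequences in $V$, this upper forcing is highly closed in $M[G]$; working in $V[G]$ one can then diagonalize against the $\kappa^+$-many dense sets of $\nscjk$ in $M[G]$, enumerated via $j_U$ applied to functions on $\kappa$ representing dense sets modulo $U$, to build a generic $H$ for $\nscjk$ over $M[G]$. This yields a lift $j_U^*\colon V[G]\to M[G][H]$ and an associated normal measure $U^*=\{X\subseteq\kappa\colon \kappa\in j_U^*(X)\}$ on $\kappa$ in $V[G]$.

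The heart of the theorem is producing exactly $\tau$ distinct lifts, all with the same ultrapower. For this I would exploit that, by the product structure and the $\tau$-fold symmetry built into the construction, $\nscjk$ in $M[G]$ admits a canonical family $\{\sigma_\eta\colon \eta<\tau\}$ of definable ``shift'' automorphisms, each mapping one generic $H$ to another generic $\sigma_\eta(H)$ in such a way that the extensions $M[G][\sigma_\eta(H)]$ all coincide as models, while the induced lifts $j_U^{*,\eta}$ disagree as functions on $V[G]$. The corresponding measures $U^*_\eta$ are then pairwise distinct yet share the single ultrapower $M[G][H]$, giving both clauses of conclusion (1).

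Finally, for surjectivity (conclusion (2)): given any normal measure $W\in V[G]$ on $\kappa$, one first verifies that $W\cap V$ is a normal measure $U\in V$---this uses the $\kappa^+$-cc and initial-segment closure of $\po^\tau$ to transfer $\kappa$-completeness and normality back to $V$. One then compares the lift of $j_U$ computed inside $\Ult(V[G],W)$ with the canonical lifts from Step 3; a rigidity analysis of $\nscjk$ shows that the only $V[G]$-generics for $\nscjk$ over $M[G]$ are the $\sigma_\eta$-images of a fixed $H$, forcing $W=U^*_\eta$ for some $\eta<\tau$. I expect the main obstacle to lie precisely in this rigidity claim: one must tailor $\nscjk$ to be symmetric enough to admit exactly $\tau$ distinct lift-inducing shifts, yet rigid enough to rule out any further generics, and hence any further measures. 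Balancing these opposing demands is presumably what dictates the precise combinatorial design of the nonstationary support product and the role of the parameter $\tau$.
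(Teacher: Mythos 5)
Your overall architecture is recognizable in the paper---a nonstationary support product, a lift of $j_U$ obtained by factoring $j_U(\po^\tau)$ at $\kappa$, $\tau$-many choices at the critical coordinate producing $\tau$ mutually definable generics (hence distinct measures with one ultrapower), and a converse---but several of the steps you rely on would fail as stated, and the two tools that actually carry the paper's proof are absent. First, the routine properties you claim are false: with nowhere stationary supports the conditions have domains of size $\kappa$, so $\po^\tau$ has antichains of size $2^\kappa$ and is not $\kappa^+$-cc, and it certainly adds bounded subsets of $\kappa$ (already $\po^\tau\uhr\lambda$ adds a new stationary subset of the first Mahlo $\lambda<\kappa$), so it is not $\kappa$-distributive. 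Preservation of $\kappa$, $\kappa^+$ and (under GCH) all cardinals is instead obtained from a Fusion Lemma for nonstationary supports. Second, and more seriously, the lifting cannot be done by ``diagonalizing against the $\kappa^+$-many dense sets'' of the upper part: since $|\po^\tau|=2^\kappa$, the dense open subsets of $j_U(\po^\tau)$ lying in $M_U$ form a family of $V$-cardinality $|j_U(2^{2^\kappa})|=\kappa^{++}$, while the tail is only $\kappa^+$-closed; moreover there is no master condition, because $\bigcup j_U[G]\setminus(\kappa+1)$ has size $2^\kappa$ and is not an element of $M_U$, so you cannot even start the diagonalization above $j_U[G]$. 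The paper avoids both problems at once: it defines $H_\eta$ as the filter generated by $\{\,j_U(p)\cup\{\la\kappa,\eta\ra\}\colon p\in G\,\}$ and verifies genericity directly, using fusion to replace each dense $E=[\alpha\mapsto e(\alpha)]_U$ by a single condition $q\in G$ that reduces $E$ to $\po_{\kappa+1}$, where $G$ together with the choice $\eta$ at coordinate $\kappa$ does the rest. Your ``shift'' picture for clause (1) is morally right---the $\tau$ lifts are exactly the $\tau$ possible values of the image generic at $\kappa$, and mutual definability of the $H_\eta$ gives the common ultrapower---but this works only because the forcing is a \emph{product}, so the part above $\kappa$ does not depend on the choice at $\kappa$.

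For clause (2) the gap is the most substantive one. Chain conditions and closure do not give you $W\cap V\in V$; the paper's key external input is Hamkins' Gap Forcing Theorem, which applies because $\po^\tau$ has a gap below $\kappa$, and yields that $j_W\uhr V$ is definable in $V$ and $U=W\cap V$ is a normal measure of $V$. Once that is in hand, no ``rigidity analysis'' of the upper forcing is needed: normality of $W$ forces $\eta=j_W(G)(\kappa)<j_W(f_\tau)(\kappa)=\tau$ and $S_\eta\in W$, so $W\supseteq U\cup\{S_\eta\}$, and since the lifting lemma already shows that $U\cup\{S_\eta\}$ generates an ultrafilter, $W=U^*_\eta$. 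You correctly sensed that ruling out extra measures is the crux, but the resolution is not a combinatorial rigidity property of the image forcing; it is the Gap Forcing Theorem plus the fact that the canonical lifts are already ultrafilters.
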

	
	In the case where $\tau<\kappa$, $\ro^\tau$ has the form $\text{Cohen}(\omega)* \po^\tau$, where $\po^\tau$ consists of partial functions $f\colon \kappa\to \tau$ whose support is nowhere stationary, ordered by inclusion.\footnote{The purpose of the initial Cohen forcing is  to ensure that $\ro^{\tau}$ has a gap below $\kappa$ (see Theorem \ref{Theorem: Hamkins Gap forcing thm}).} In the case where $\tau\in \{ \kappa, \kappa^+ \}$, $\po^\tau$ is slightly modified using canonical functions, see definition \ref{Definition: the splitting forcing} below.

	As mentioned above, one of the main advantages of Theorem \ref{Theorem: controlling normal measures} is that it does not involve forcing over canonical inner models. In the original Friedman–Magidor proof, forcing over $L[U]$ played a significant role in two ways. First, by assuming that $L[U]$ is the core model and forcing over it, every normal measure on $\kappa$ in the generic extension restricts to an iterated ultrapower of $L[U]$. (This is the starting point of a sequence of classical results in inner model theory, culminating in a well-known theorem of Schindler, see \cite{schindler2006iterates}). Second, canonical inner models possess fine structure, which Friedman and Magidor utilized by incorporating self-coding posets into their forcing construction. (See \cite{FriMag09} for further details on self-coding and its role in constraining the number of normal measures).

	While these techniques demonstrate the utility of inner model theory and fine structure in forcing theory, forcing over canonical inner models can be restrictive—particularly because it does not apply at levels of the large cardinal hierarchy not yet reached by the inner model program. Since our proof of Theorem \ref{Theorem: controlling normal measures} avoids the use of inner model theory, it can be employed to obtain consistency results at the level of strongly compact and supercompact cardinals, such as the following:

	\begin{theorem}[Apter, K., Poveda]\label{Theorem: Consistency strengths at the level of supercompacts with Arthur and Alejandro}
		Assume the consistency of a supercompact cardinal and the linearity of the Mitchell order. 
		\begin{enumerate}
			\item It is consistent that there exists a supercompact, and there are exactly two normal measures on the least measurable cardinal. 
			\item Assume in addition a measurable cardinal above the supercompact. Then it is consistent that there is a supercompact cardinal, and the least measurable cardinal above it carries exactly two normal measures.
		\end{enumerate}
		\end{theorem}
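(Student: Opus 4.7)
My plan is to derive both parts as corollaries of Theorem \ref{Theorem: controlling normal measures} applied with $\tau=2$, once the right ground model has been set up. The common template is: from the stated assumption produce an intermediate model $V_0$ in which the target cardinal has a \emph{unique} normal measure while the supercompact of $V$ is preserved; then force with $\po^2$ at the target cardinal to double the number of normal measures to exactly two.

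For part~(1), starting from $V$ in which $\kappa$ is supercompact and the Mitchell order is linear, I would first invoke a preparation in the spirit of \cite{ApterCummingsHamkinsLargeCardinalsFewMeasures} to obtain $V_0$ in which $\kappa$ is still supercompact, GCH holds up to $\kappa$, and the least measurable cardinal $\kappa_0 < \kappa$ carries exactly one normal measure $U$. (This is the step where linearity of the Mitchell order is essential: it identifies a canonical candidate and lets the preparation collapse the set of normal measures on $\kappa_0$ to a singleton.) Inside $V_0$, apply Theorem \ref{Theorem: controlling normal measures} at $\kappa_0$ with $\tau=2$; the poset $\po^2$ has cardinality roughly $\kappa_0^+ < \kappa$, so by the Levy--Solovay theorem the supercompactness of $\kappa$ persists, no new measurables appear below $\kappa_0$, and no measurables arise in the interval $(\kappa_0,\kappa)$. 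Clause~(1) of Theorem \ref{Theorem: controlling normal measures} produces exactly two lifts of $U$, and clause~(2) certifies that these are all the normal measures on $\kappa_0$ in $V_0[G]$, so $\kappa_0$ remains the least measurable and carries exactly two normal measures.

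For part~(2), let $\lambda > \kappa$ be the assumed measurable in $V$. A parallel preparation, this time above $\kappa$, yields $V_0$ in which $\kappa$ is still supercompact, $\lambda$ is the least measurable above $\kappa$, and $\lambda$ carries a unique normal measure. Now apply Theorem \ref{Theorem: controlling normal measures} at $\lambda$ with $\tau=2$. Since $\po^2$ consists of partial functions with nonstationary support on $\lambda$, it is $\kappa^+$-closed (indeed, $<\lambda$-strategically closed), so it adds no new subsets of $\kappa$ and standard master-condition lifts carry the supercompactness embeddings of $V_0$ up to $V_0[G]$. Combined with Theorem \ref{Theorem: controlling normal measures} this gives exactly two normal measures on $\lambda$ in the extension, while the closure of $\po^2$ forbids new measurables in $(\kappa,\lambda)$, so $\lambda$ remains the least measurable above $\kappa$.

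The main obstacle in this plan is step~(i): arranging the preparation that forces the target cardinal to have a unique normal measure while retaining the supercompact. Linearity of the Mitchell order is invoked precisely here, and this is the single place where the argument genuinely rests on classical results outside the scope of Theorem \ref{Theorem: controlling normal measures}; everything downstream is, by design, a direct application of the main theorem of the paper plus routine preservation lemmas (Levy--Solovay in part~(1), closure-based lifting in part~(2)).
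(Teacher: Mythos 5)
The paper itself does not prove this theorem: immediately after its statement the author writes that the theorem ``and its proof are expected to appear in a joint work with Apter and Poveda,'' and the proof environment later labelled as a proof of this theorem is a mislabelled cross-reference (its content proves Theorem \ref{Theorem: the number of extenders on a strong} about $(\kappa,\kappa^{++})$-extenders). So there is no in-paper argument to compare yours against; I can only assess your proposal on its own terms. Your template --- prepare a ground model in which the target cardinal carries a \emph{unique} normal measure while the supercompact survives, then apply Theorem \ref{Theorem: controlling normal measures} with $\tau=2$ --- is the natural one and is almost certainly the intended shape of the argument. But as written it has two genuine gaps.

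First, the preparation step is not a routine citation: producing a model in which the least measurable below (or the least measurable above) a supercompact carries exactly one normal measure, with the supercompact preserved, is itself a substantial theorem, and it is exactly where the hypothesis of linearity of the Mitchell order must do real work. Waving at ``a preparation in the spirit of'' \cite{ApterCummingsHamkinsLargeCardinalsFewMeasures} leaves the entire content of the forthcoming Apter--Kaplan--Poveda argument unproved; everything downstream of it in your write-up is indeed routine, which is precisely why this step cannot be treated as a black box. Second, in part (2) your closure claim is false for the forcing as defined: $\po^2$ at $\lambda$ has conditions supported on \emph{all} inaccessibles below $\lambda$, including those below $\kappa$, and an increasing chain of length $\delta$ can have a support that is stationary in an inaccessible $\mu\leq\delta$; the poset is therefore only as closed as the least inaccessible allows, is nowhere near $\kappa^+$-closed, and would add subsets of $\kappa$ and disturb the interval below $\kappa$. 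You must truncate the supports to inaccessibles in the interval $(\kappa,\lambda)$ and re-verify that the Fusion Lemma and Theorem \ref{Theorem: controlling normal measures} go through for the truncated poset (they should, since the argument localizes above $\kappa$). Even then, preservation of full supercompactness of $\kappa$ under forcing \emph{above} $\kappa$ does not follow from closure alone; you need either a Laver preparation making $\kappa$ indestructible under ${\kappa}$-directed closed forcing (which the truncated poset is) or a master-condition lift for each degree of supercompactness, and this must be reconciled with the preparation from the first step.
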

		
		Theorem \ref{Theorem: Consistency strengths at the level of supercompacts with Arthur and Alejandro} and its proof are expected to appear in a joint work with Apter and Poveda. The first clause follows directly from Theorem \ref{Theorem: controlling normal measures} and the fact that the forcing $\ro^\tau$ (for $\tau = 2$) is small relative to the least supercompact cardinal above $\kappa$. The second is obtained by forcing with a variation of $\po^\tau$ after the universe has been suitably prepared so that the supercompactness of $\kappa$ is indestructible under $\kappa$-directed-closed forcing.

	Our forcing notions $\po^\tau$ (for $\tau< \kappa$) have several additional applications. In Section \ref{Section: Lifting iterates of U}, we classify all possible lifts of $U^n$ to the generic extension obtained by forcing with $\po^\tau$, where $U \in V$ is a normal measure on $\kappa$ and $n < \omega$. When this analysis is carried out in the case where $V = L[U]$ is the core model, it allows for a complete description of the $\sigma$-complete ultrafilters in the extension. This yields a ZFC model with an interesting measure structure:

	\begin{theorem}\label{Theorem: Weak UA}
		Assume that $V = L[U]$, and let $\kappa$ be the measurable cardinal. Fix $\tau<\kappa$. Let $G\subseteq \po^\tau$ be generic over $V$. Then in $L[U][G]$:
		\begin{enumerate}
			\item there are $\tau$ normal measures on $\kappa$, $\la U^*_\eta \colon \eta<\tau \ra$, all of them have the same ultrapower.
			\item for every $\sigma$-complete ultrafilter $W\in L[U][G]$, the ultrapower $\text{Ult}(L[U][G],W)$ is a finite iterated ultrapower of $L[U][G]$ by normal measures.
			\item every ultrapower of $L[U][G]$ (via a $\sigma$-complete ultrafilter) is isomorphic to $\text{Ult}(L[U][G], \left(U^*_0\right)^n)$ for some $n<\omega$.
		\end{enumerate}
	\end{theorem}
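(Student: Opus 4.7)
The plan is to build on Theorem~\ref{Theorem: controlling normal measures} and combine it with Kunen's classical analysis of iterated ultrapowers of $L[U]$, together with the classification of lifts of $U^n$ established in Section~\ref{Section: Lifting iterates of U}.

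Part (1) is almost immediate. Since $V=L[U]$ carries a unique normal measure on $\kappa$, clause (2) of Theorem~\ref{Theorem: controlling normal measures} gives that every normal measure on $\kappa$ in $V[G]$ is of the form $U^*_\eta$ for some $\eta<\tau$. One also needs to rule out normal measures on cardinals other than $\kappa$: the critical point of the ultrapower map of such a measure would restrict to a measurable cardinal of $L[U]$ in the ground model, but $\kappa$ is the unique measurable in $L[U]$ and the forcing preserves cardinals (using GCH in $L[U]$), so the critical point must equal $\kappa$.

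For part (2), let $W\in L[U][G]$ be a $\sigma$-complete ultrafilter with ultrapower map $j_W : L[U][G] \to N$. I would first argue that $\cp(j_W)=\kappa$: the completeness cardinal of $W$ is measurable in $L[U][G]$, and as in the previous paragraph $\kappa$ is the unique such cardinal. Next, restrict to obtain an elementary $j_W \uhr L[U] : L[U] \to j_W(L[U])$. By the classical theorem on iterates of $L[U]$ (extended by Schindler, as the excerpt notes), this restriction coincides with the $n$-th iterated ultrapower $j_{U^n} : L[U] \to \Ult(L[U],U^n)$ for some $n<\omega$. The main step is then to promote this to the statement that $j_W$ itself is the ultrapower by a lift $W'$ of $U^n$ in the sense of Section~\ref{Section: Lifting iterates of U}. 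Once this is achieved, the classification of such lifts there identifies $W'$ as a finite iterated ultrapower by normal measures of $L[U][G]$, which gives (2).

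Part (3) then follows from the analog of clause (1) of Theorem~\ref{Theorem: controlling normal measures} for iterates of $U$, which is the content of Section~\ref{Section: Lifting iterates of U}: any two lifts of $U^n$ to $L[U][G]$ have the same ultrapower, so in particular $\Ult(L[U][G],W) \cong \Ult(L[U][G],(U^*_0)^n)$.

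The hardest part I expect is promoting the restricted embedding $j_W \uhr L[U] = j_{U^n}$ to the statement that the whole $j_W$ is a lift of $j_{U^n}$ in the classified sense. In other words, one must show that $j_W$ is generated over $L[U]$ by the standard generators of the iterated ultrapower together with a single ``generic-like'' parameter, and that this parameter is of the form controlled by the classification. This is essentially a factoring argument: $j_W$ must factor through the canonical lift of $j_{U^n}$, and one then needs to exploit the nonstationary-support product structure of $\po^\tau$ to pin down the factor as one of the lifts enumerated in Section~\ref{Section: Lifting iterates of U}, ruling out exotic $\sigma$-complete ultrafilters that are not obtained as iterates of some $U^*_\eta$.
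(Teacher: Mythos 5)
Your overall strategy coincides with the paper's: clause (1) from Theorem \ref{Theorem: controlling normal measures}, clause (2) by restricting $j_W$ to $L[U]$, identifying the restriction with $j_{U^n}$, and then identifying $W$ with a classified lift of $U^n$, and clause (3) by comparison with $(U^*_0)^n$. However, the step you yourself flag as the hardest --- promoting $j_W\uhr L[U]=j_{U^n}$ to the statement that $W$ is one of the classified lifts --- is exactly where your write-up stops being a proof, and the mechanism the paper uses there is not the ``factoring argument'' you describe. The paper first applies Hamkins' Gap Forcing theorem to conclude $W\cap V\in V=L[U]$ (you never invoke this, yet it is what makes $j_W\uhr V$ definable in $V$ and hence amenable to Kunen's analysis), transfers along a Rudin--Keisler isomorphism so that $W\supseteq U^n$ and $j_W\uhr V=j_{U^n}$, and then reads off the parameters $\eta_i:=j_W(\cup G)(\kappa_i)<j_{U^i}(\tau)$ directly from $W$. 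The point is that the pattern set $\{\vec{\nu}\in[\kappa]^n:\forall i<n\ (\cup G)(\nu_i)=f^i_{\eta_i}(\vec{\nu})\}$ then lies in $W$, and Corollary \ref{Corollary: generating lifts of U^n} says that $U^n$ together with this set already generates an ultrafilter; since that generated ultrafilter is contained in the ultrafilter $W$, maximality forces equality. This ``read off the parameters and use maximality'' step is the missing idea; without it, ``ruling out exotic $\sigma$-complete ultrafilters'' remains an unproven assertion.

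A second, smaller gap: you attribute to Section \ref{Section: Lifting iterates of U} a ready-made statement that the classified lifts have ultrapowers which are finite iterations by normal measures, and that any two lifts of $U^n$ have the same ultrapower. Neither is proved there prior to this theorem; Corollary \ref{Corollary: generating lifts of U^n} only produces the generic $H_{\vec{\eta}}$ and the generated ultrafilter. The decomposition is established inside the proof itself, by inductively building models $N^i\simeq M_{U^{i+1}}[G^{i+1}]$ and normal measures $W^i$ generated by $W^{i-1}\cup\{S^i_{\eta_i}\}$ in the successive ultrapowers, checking (via Laver's ground-model definability and elementarity) that the grounds really are the $M_{U^{i+1}}$, and then verifying that the final generic is exactly $H_{\vec{\eta}}$, so that $N^{n-1}=\text{Ult}(V[G],W)$. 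Clause (3) then follows by an induction using the fact that all normal measures in each of these models have the same ultrapower, giving $N^i\simeq\text{Ult}(V[G],(U^*_0)^{i+1})$. You would need to supply this construction; it does not come for free from the earlier results.
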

	
	Theorem \ref{Theorem: Weak UA} is related to the Weak Ultrapower Axiom. Recall that Goldberg's Ultrapower Axiom (UA) states that for any pair of $\sigma$-complete ultrafilters $U, W$, there are $\sigma$-complete ultrafilters $U^*\in M_W$ and $W^* \in M_U$,\footnote{Here and below, for a $\sigma$-complete ultrafilter $U$, we denote by $M_U$ the transitive collapse of $\text{Ult}(V,U)$, and by $j_U \colon V\to M_U$ the corresponding ultrapower embedding.} such that $M^{M_W}_{U^*} = M^{M_U}_{W^*}$ and $j_{W^*}\circ j_U = j_{U^*} \circ j_W$. The Weak Ultrapower Axiom is obtained from UA by removing the demand that $j_{W^*}\circ j_U = j_{U^*} \circ j_W$. Theorem \ref{Theorem: Weak UA} (for $1<\tau<\kappa$) demonstrates that a model of the Weak UA and the negation of UA\footnote{The negation of UA holds in the model of Theorem \ref{Theorem: Weak UA} since, by a theorem of Goldberg, UA implies the linearity of the Mitchell order. See \cite{GoldbergUABook} for more details.} can be constructed assuming the consistency of a measurable cardinal. This slightly improves a previous result by Goldberg and Benhamou (see \cite{BenhamouGoldberg2024applicationsofMagidor}), who constructed such a model assuming the consistency of a measurable limit of measurables.

	\begin{corollary}
		Weak UA + $\neg$UA is equiconsistent with a single measurable cardinal.
	\end{corollary}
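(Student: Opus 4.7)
The plan is to derive both directions of the equiconsistency from Theorem \ref{Theorem: Weak UA}. For the upper bound, I would start from a model of ZFC with a measurable cardinal, pass to the minimal inner model $V = L[U]$ in which $\kappa$ is measurable, and apply Theorem \ref{Theorem: Weak UA} with $\tau = 2$. This produces a generic extension $V[G]$ in which there are exactly two normal measures $U^*_0, U^*_1$ (both with the same ultrapower), and in which every $\sigma$-complete ultrapower is isomorphic to some $N_n := \text{Ult}(V[G], (U^*_0)^n)$ for $n < \omega$.

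To verify $\neg$UA in $V[G]$, I would use that the distinct normal measures $U^*_0, U^*_1$ share a common ultrapower $M$; since no normal measure on $\kappa$ lies in its own ultrapower, neither of $U^*_0, U^*_1$ is Mitchell-below the other, so by Goldberg's theorem that UA implies linearity of the Mitchell order, UA fails. To verify Weak UA, given two $\sigma$-complete ultrafilters $W_1, W_2$ in $V[G]$, I would identify their ultrapowers with $N_n$ and $N_m$ for some $n \leq m$; then $N_m$ arises from $N_n$ as a finite iterated ultrapower by successive images of $U^*_0$, and packaging these into a product gives a single countably complete ultrafilter $W^* \in N_n$ with $\text{Ult}(N_n, W^*) = N_m$. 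Choosing $U^* \in N_m$ to be any principal ultrafilter yields $\text{Ult}(N_m, U^*) = N_m$, so the two ultrapowers coincide and the Weak UA condition is witnessed.

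For the lower bound, I would observe that if no measurable cardinal exists then every $\sigma$-complete ultrafilter is principal, every ultrapower collapses to the ground model, and UA holds vacuously (taking both $U^*$ and $W^*$ principal); hence $\neg$UA alone forces the existence of a measurable cardinal, matching the upper bound. The main conceptual step is the verification of Weak UA: while it is fairly routine to combine a finite chain of normal-measure iterations into a single $\sigma$-complete ultrafilter in $N_n$, care is needed to transport this construction back along the isomorphisms supplied by Theorem \ref{Theorem: Weak UA} and to handle the degenerate case $n = m$ (where $M_{W_1}$ and $M_{W_2}$ are already isomorphic and one can take $U^*, W^*$ principal).
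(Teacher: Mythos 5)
Your proposal is correct and follows essentially the route the paper intends (the paper leaves this corollary's proof implicit in the surrounding discussion and footnote): the upper bound comes from forcing with $\po^\tau$ for $\tau=2$ over $L[U]$, with $\neg$UA via Goldberg's theorem on linearity of the Mitchell order applied to the two distinct normal measures sharing an ultrapower, Weak UA from the linear chain of ultrapowers $\text{Ult}(V[G],(U^*_0)^n)$ given by clause 3 of Theorem \ref{Theorem: Weak UA}, and the lower bound from the observation that UA holds vacuously absent a measurable. Your extra care in packaging the comparison as $W^*=(j_{(U^*_0)^n}(U^*_0))^{m-n}\in N_n$ against a principal $U^*\in N_m$ is exactly the right way to make the Weak UA verification explicit.
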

	
	Finally, building on the techniques introduced earlier, we generalize the Friedman–Magidor framework from normal measures to extenders in Theorem \ref{Theorem: Friedman–Magidor for extenders}. A typical application of this theorem is illustrated by the following example:
	
	\begin{theorem}\label{Theorem: the number of extenders on a strong}
		Assume GCH and let $\kappa$ be a $(\kappa+2)$-strong-cardinal. Then for every $\tau\leq \kappa^{++}$, there exists a cardinal-preserving forcing extension $V[G]$ in which every $(\kappa, \kappa^{++})$-extender $E$ that witnesses $\kappa$ being $(\kappa+2)$-strong, lifts to exactly $\tau$ nonequivalent\footnote{Two extenders are equivalent if they induce the same ultrapower embedding} $(\kappa, \kappa^{++})$-extenders of $V[G]$, and all of them have the same ultrapower and witness $(\kappa+2)$-strongness over $V[G]$. Moreover, every $(\kappa, \kappa^{++})$-extender witnessing that $\kappa$ is $(\kappa+2)$-strong in $V[G]$ arises as such a lift.
	\end{theorem}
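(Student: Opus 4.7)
The plan is to reduce the theorem to Theorem \ref{Theorem: Friedman–Magidor for extenders}, which already supplies a forcing $\po^\tau$ whose generic extension $V[G]$ controls the lifts of every $(\kappa,\kappa^{++})$-extender of $V$: each such extender has exactly $\tau$ nonequivalent lifts sharing a common ultrapower, and every $(\kappa,\kappa^{++})$-extender of $V[G]$ arises as one of these lifts. What remains is to verify that the property of witnessing $(\kappa+2)$-strongness is preserved upwards under lifts, and conversely, that every $(\kappa,\kappa^{++})$-extender of $V[G]$ with this property arises from a $(\kappa+2)$-strong extender of $V$. Under GCH the poset $\po^\tau$ has cardinality at most $\kappa^{++}$ and, being a nonstationary support product, has enough closure and chain condition to preserve cardinals and to compute $V_{\kappa+2}^{V[G]}$ reasonably.

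For the forward direction, let $E\in V$ witness $(\kappa+2)$-strongness via $j_E\colon V\to M$, so $V_{\kappa+2}^V\subseteq M$. Via the master-condition lifting argument underlying Theorem \ref{Theorem: Friedman–Magidor for extenders}, each lift $E^*_\eta$ arises from an extension $j^*\colon V[G]\to M[H]$ where $H$ is generic over $M$ for a suitable image poset. Since every new subset of an ordinal less than $\kappa^{++}$ added to $V$ by $G$ is already coded into $H$, we obtain $V_{\kappa+2}^{V[G]}\subseteq M[H]=\Ult(V[G],E^*_\eta)$, so the lift witnesses $(\kappa+2)$-strongness.

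For the converse, fix a $(\kappa,\kappa^{++})$-extender $E^*\in V[G]$ witnessing $(\kappa+2)$-strongness, and let $j_{E^*}\colon V[G]\to M^*$ be its ultrapower. I would define $F=\langle F_a:a\in[\kappa^{++}]^{<\omega}\rangle$ in $V$ by $F_a=E^*_a\cap V$, and argue that $F\in V$ is a genuine $(\kappa,\kappa^{++})$-extender whose ultrapower embeds elementarily into $M^*$ via $[a,f]_F\mapsto[a,f]_{E^*}$; hence $\Ult(V,F)$ is well-founded. The $(\kappa+2)$-strongness of $F$ in $V$ then follows from $V_{\kappa+2}^V\subseteq V_{\kappa+2}^{V[G]}\cap V\subseteq M^*\cap V=\Ult(V,F)$. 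Finally, Theorem \ref{Theorem: Friedman–Magidor for extenders} identifies $E^*$ as one of the $\tau$ lifts of $F$. The main obstacle will be showing that each $F_a$ actually lies in $V$ and that $\Ult(V,F)=V\cap M^*$: the former is exactly the restriction phenomenon already built into Theorem \ref{Theorem: Friedman–Magidor for extenders} (applied componentwise to the generating measures of $E^*$), while the latter relies on the fact that, under $(\kappa+2)$-strongness, every element of $V\cap M^*$ is representable as $[a,f]_{E^*}$ for some $f\in V$, since the seeds $\alpha<\kappa^{++}$ generate enough of $V_{\kappa+2}$.
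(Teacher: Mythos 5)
Your proposal follows essentially the same route as the paper: apply Theorem \ref{Theorem: Friedman–Magidor for extenders} (with $\tau(\alpha)=\alpha^{++}$ and $g(\alpha)=\alpha^{+3}$), verify the forward preservation of $(\kappa+2)$-strongness by the standard master-condition/nice-name argument, and obtain the converse from the restriction $E^*\cap V$ together with Hamkins' Gap Forcing theorem, which is exactly how the paper gets $M^{V[G]}_{E^*}\cap V = M_E$ and hence $V_{\kappa+2}\subseteq M_E$. The only cosmetic difference is that your converse re-derives by hand (via the factor map on $\Ult(V,F)$) what the paper simply reads off from Theorem \ref{Theorem: Clause 2 of the theorem on extenders} and the gap theorem.
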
 
	
	Goldberg observed that the bound $\kappa^{++}$ on the number of $(\kappa, \kappa^{++})$-extenders having the same ultrapower is optimal under GCH (see Theorem \ref{Theorem: Number of extenders with the same ultrapower}). Similar results can be obtained for extenders with higher degrees of strongness.

	The structure of this paper is as follows: In Section \ref{Section: NS supports} we cover the basic properties of nonstationary support products. In Section \ref{Section: The splitting forcing} we prove Theorem \ref{Theorem: controlling normal measures}. In Section \ref{Section: Lifting iterates of U} we discuss lifting iterates of $U^n$ for a normal measure $U$ and $n<\omega$, and prove Theorem \ref{Theorem: Weak UA}. In Section \ref{Section: Friedman–Magidor for extenders} we phrase and prove a Friedman–Magidor theorem for extenders (Theorem \ref{Theorem: Friedman–Magidor for extenders}), and apply it to prove Theorem \ref{Theorem: the number of extenders on a strong}. In Section \ref{Section: Open Qeustions} we discuss several open questions.

	Throughout the paper, we use the following convention in forcing: if a condition $p$ extends a condition $q$, we write $q \leq p$, and the weakest condition in a forcing notion $\po$ is denoted by $0_{\po}$. The rest of the notation is standard.
	
	\subsection*{Acknowledgments}
	The author owes special debt to Omer Ben-Neria for suggesting crucial ideas in the proof of Theorems \ref{Theorem: controlling normal measures} and \ref{Theorem: Weak UA}. The author also thanks Gabe Goldberg for numerous discussions on the subject of this paper. Goldberg identified a mistake in an earlier attempt to prove Theorem \ref{Theorem: controlling normal measures}, which led the author to discover a coding-free proof of the Friedman–Magidor theorem. However, this proof was more complicated, required an additional forcing, and resulted in a version that lacked the property that all normal measures in the generic extension have the same ultrapower. Ben-Neria suggested the key idea that led to the uniqueness of normal ultrapowers, and also to the current simplified version of the forcing in Definition \ref{Definition: the splitting forcing} below. The author is deeply grateful to both of them for their support and encouragement throughout the preparation of this work.
	
	The author would like to thank Moti Gitik for pointing out the important correction that $\po^\tau$ should be replaced by $\ro^\tau$ in Theorem \ref{Theorem: controlling normal measures}.

	\section{Nonstationary support products}\label{Section: NS supports}

	In this section, we briefly develop the basic framework of product forcing with a nonstationary support. All the ideas already appeared in one form or another in \cite{FriMag09}. Our presentation is influenced from  \cite{benneriaunger2017homogeneouschanges} and \cite{ApterCummingsNormalMeasuresOnTallCards}.
	
	\begin{definition}
	A set of ordinals $A$ is \textit{nowhere stationary}\footnote{We will adopt this terminology, although the phrase \textit{nonstationary in inaccessibles} would be more accurate in this context. For our purposes, this distinction is irrelevant, since the nowhere stationary sets we consider will consist solely of inaccessible cardinals - for such sets, being nonstationary in inaccessibles implies being nonstationary in every ordinal of uncountable cofinality.} if for every inaccessible cardinal $\alpha$, $A\cap \alpha$ is nonstationary in $\alpha$.    
	\end{definition}
	
	\begin{definition}\label{Def: nonstationary support product}(Products with nonstationary support)
		Fix a cardinal $\kappa$ and let $I\subseteq \kappa$ be an unbounded set of ordinals. For every $\alpha\in I$, let $\qo_\alpha$ be a forcing notion. The \textbf{nonstationary support product} $\po = \prod^{NS}_{\alpha\in I} \qo_\alpha$ consists of conditions $p$ which are functions with domain $I$, such that for every $\alpha\in I$, $p(\alpha)\in \qo_\alpha$, and the set
		$$ \supp(p) = \{  \alpha\in I \colon p(\alpha)\neq 0_{\qo_\alpha}   \} $$
		is nowhere stationary.
		
		The ordering is defined by setting that $p$ extends $q$ if for every $\alpha\in \supp(q)$, $p(\alpha)$ extends $q(\alpha)$ in $\qo_\alpha$ (in particular,  $\supp(q)\subseteq \supp(p)$).
	\end{definition}
	
	Assuming that $\beta<\kappa$ and $\po = \prod_{\alpha\in I}^{NS}\qo_\alpha$ is a nonstationary support product, $\po$ can be naturally factored to the form $\po\uhr \beta \times \po\setminus \beta$, where $\po\uhr \beta = \prod_{\alpha\in \beta\cap I}^{NS} \qo_\alpha $ and $\po\setminus \beta = \prod_{\alpha\in I\setminus \beta}^{NS}\qo_\alpha$, by identifying a condition $p\in \po$ with the pair $(p\uhr (I\cap \beta),  p\uhr (I\setminus \beta)) $. We denote in this case $p\uhr \beta = p\uhr (I\cap \beta)$ and $p\setminus \beta = p\uhr (I\setminus \beta)$. 
	
	The most important feature of nonstationary support products or  iterations is their fusion property.

	\begin{lemma}[The Fusion Lemma]\label{Lemma: Fusion for a product}
		Let $\kappa$ be a limit of inaccessible cardinals, and let $I\subseteq \kappa$ be an unbounded set of inaccessibles below $\kappa$. Let $\po = \prod_{\alpha\in I}^{NS}\qo_\alpha$ be a nonstationary support product. Assume that:
		\begin{enumerate}
			\item for every $\alpha\in I$, $\text{rank}({\qo}_\alpha)< \min(I\setminus \alpha+1)$.
			\item for every $\alpha\in I$, $\qo_\alpha$ is $\alpha$-closed. 
		\end{enumerate}
		Then $\po$ satisfies the \textbf{Fusion Property}; that is, given:
		\begin{itemize}
			\item a condition $p\in \po$,
			\item a sequence $\la d(\alpha) \colon \alpha<\kappa \ra$ of dense-open subsets of $\po$,
		\end{itemize}
		there exists $p^*\geq p$ and a club $C\subseteq \kappa$ (if $\cf(\kappa)= \omega$, $C$ is an unbounded cofinal $\omega$-sequence) such that, for every $\alpha\in C$, the set
		$$ \{ r\in \po\uhr {\alpha+1} \colon r^{\frown} p^*\setminus (\alpha+1)\in d(\alpha)  \} $$
		is dense in $\po\uhr {\alpha+1}$ above $p^*\uhr \alpha+1$.
	\end{lemma}
	
	\begin{proof}
		We first take care of the case where $\kappa$ is singular. Fix an inaccessible $\nu\in (\cf(\kappa),\kappa)$ and a continuous, cofinal, increasing sequence $\la \nu_i \colon i<\cf(\kappa) \ra$ in $\kappa$, such that $\nu_0 > \nu$. Since $\po\setminus (\nu+1)$ is more than $\max(\cf(\kappa), |\po\uhr(\nu+1)|)$-closed, we may find a condition $p^*\geq p$ such that for every $i<\cf(\kappa)$ and $r\in \po\uhr \nu+1$, there exists $r'\geq r$ such that ${r'}^{\frown} (p^*\setminus (\nu+1))\in d(\nu_i)$. It follows that for every $i<\cf(\kappa)$, 
		$$ \{ r\in \po\uhr \nu_i+1 \colon r^{\frown} p^*\setminus (\nu_i+1)\in d(\nu_i) \} $$
		is dense in $\po\uhr (\nu_i+1)$ above $p^*\uhr (\nu_i+1)$. Thus, $p^*$ and $C = \{ \nu_i \colon i<\cf(\kappa) \}$ are as desired.
		
		Thus, assume from now on that $\kappa$ is regular. We construct by induction sequences $\la p_i \colon i<\kappa \ra\subseteq \po$,  $\la \nu_i \colon i<\kappa \ra\subseteq \kappa$ such that $\la p_i \colon i<\kappa \ra$ is a fusion sequence with respect to $\la \nu_i \colon i<\kappa \ra$, in the sense that the following properties are fulfilled: 
		\begin{enumerate}
			\item $\la \nu_i \colon i<\kappa \ra$ is a continuous, cofinal sequence in $\kappa$.
			\item $\la p_i \colon i<\kappa \ra$ is an increasing sequence of conditions extending $p$, such that, for every $i\leq j<\kappa$,
			\begin{enumerate}
				\item $p_j \uhr (\nu_i+1) = p_i \uhr (\nu_i+1)$.
				\item $\nu_j \notin \supp(p_i)$.
			\end{enumerate}
			\item For every $i<\kappa$,
			$$ \{  r\in \po\uhr \nu_{i}+1 \colon r^{\frown} (p_{i+1}\setminus (\nu_i+1))\in d(\nu_i) \} $$
			is a dense-open subset of $\po\uhr {\nu_i+1}$.
		\end{enumerate}
		Assuming that such sequences have been be constructed, we can set $p^* = \bigcup_{i<\kappa} p_i \uhr (\nu_{i}+1)$. Since $C = \{ \nu_i \colon i<\kappa \}$ is a club disjoint from $\supp(p^*)$, $p^*$ is a legitimate condition in $\po$.\footnote{Note that we require that $\supp(p^*)$ is nonstationary in any inaccessible $\lambda<\kappa$ as well; this follows immediately since given such $\lambda$, $p^*\uhr \lambda = p_i \uhr \lambda$ for any high enough $i<\kappa$.} It's then not hard to verify that $p^*$ and $C$ are as required.
		
		Thus, it suffices to prove that sequences $\langle p_i : i < \kappa \rangle$ and $\langle \nu_i : i < \kappa \rangle$ with the desired properties can be constructed. We define these sequences inductively, along with an auxiliary sequence $\langle C_i : i < \kappa \rangle$ of clubs in $\kappa$, such that, in addition to the above requirements,
		\begin{enumerate}  \setcounter{enumi}{3}
			\item for every $i<\kappa$, $C_i$ is disjoint from $\supp(p_i)$.
			\item for every $i < j<\kappa$, $\nu_j \in C_i$ and $C_j \subseteq C_i$.
		\end{enumerate}
		We proceed to the construction. For $i=0$, take $p_0 = p$, $\nu_0 = \nu$ and $C_0$ any club disjoint from $\supp(p_0)$. 
		
		For the successor step, assume that $p_i, \nu_i, C_i$ have been constructed. First set $\nu_{i+1} = \min( C_i\setminus (\nu_i+1 ))$. Define $p_{i+1}\geq p_i$ such that:
		\begin{itemize}
			\item $p_{i+1}\uhr (\nu_{i+1}+1) = p_i\uhr (\nu_{i+1}+1)$ (in particular, $\nu_{i+1}\notin \supp(p_{i+1})$).
			\item $p_{i+1}\setminus (\nu_{i+1}+1)$ extends $p_i\setminus (\nu_{i+1}+1)$, and satisfies that
			$$  \{  r\in \po\uhr \nu_{i+1}+1 \colon r^{\frown} ( p_{i+1}\setminus ( \nu_{i+1}+1 ) )\in d(\nu_{i+1}) \} $$
			is a dense subset of $\po\uhr \nu_{i+1}+1$. 
		\end{itemize}
		Note that the amount of closure of $\po\setminus \nu_{i+1}+1$ is larger than $| \po\uhr \nu_{i+1}+1 |$, allowing the construction of such a condition $p_{i+1}$. Finally, let $C_{i+1}\subseteq C_i$ be a club in $\kappa$ disjoint from $\supp(p_{i+1})$.	
		
		For the limit step, assume that $i<\kappa$ and $\la p_j, \nu_j , C_j \colon j<i \ra$ were constructed. First, set $\nu_i = \sup\{ \nu_j \colon j<i \}$. Let $p_i\in \po$ be a condition such that:
		\begin{itemize}
			\item $p_i \uhr\nu_i = \bigcup_{j<i} p_j \uhr (\nu_j+1) $.
			\item $\nu_i \notin \supp(p_i)$.
			\item $p_i \setminus (\nu_i+1)$ extends all the conditions $\langle p_j \setminus (\nu_i+1) \colon j<i \rangle$, and satisfies that
			$$ \{ r\in \po\uhr \nu_i+1 \colon r^{\frown} ( p_i\setminus (\nu_i+1))\in d(\nu_i) \} $$ is a dense subset of $\po\setminus \nu_i+1$.
		\end{itemize}
		Note that $\po\setminus \nu_i+1$ is closed enough to fulfill the third requirement. Also, by our construction, $\nu_i$ is a limit point of each club $C_j$ for $j<i$, and thus $\nu_i \notin \supp(p_j)$ for every $j<i$. This ensures that $p_i$ extends $p_j$ for $j<i$. Finally, let $C_i\subseteq \bigcap_{j<i} C_j$ be a club disjoint from $\supp(p_i)$. This concludes the inductive construction, which concludes the proof.
	\end{proof}

\begin{corollary}\label{Corollary: evaluating new functions by old ones}
	Let $\po = \prod^{NS}_{\alpha\in I}\qo_\alpha$ be as in Lemma \ref{Lemma: Fusion for a product}, $p\in \po$ and let $\name{f}$  be a $\po$-name forced by $p$ to be a function from $\kappa$ to the ordinals. Then there exists $q\geq p$, a club $C\subseteq \kappa$ and a function $F\in V$ such that $C\subseteq \dom(F)$ and for every $\alpha\in C$, $|F(\alpha)|\leq |\po_{\alpha+1}|$ (in particular, $|F(\alpha)|< \min(I\setminus \alpha+1)$) and $q\Vdash \name{f}(\alpha)\in F(\alpha)$.
\end{corollary}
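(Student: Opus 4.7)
The plan is to apply the Fusion Lemma directly to the sequence of dense-open sets $d(\alpha) \subseteq \po$ consisting of conditions deciding the value of $\name{f}(\alpha)$. Applying Lemma~\ref{Lemma: Fusion for an iteration} to $p$ and $\la d(\alpha) : \alpha < \kappa \ra$ produces a condition $q \geq p$ and a club $C \subseteq \kappa$ such that for every $\alpha \in C$, the set
$$D_\alpha := \{ r \in \po_{\alpha+1} : r \geq q \uhr (\alpha+1),\; r^{\frown}(q \setminus (\alpha+1)) \in d(\alpha) \}$$
is dense in $\po_{\alpha+1}$ above $q \uhr (\alpha+1)$, where $q \setminus (\alpha+1)$ denotes the condition agreeing with $q$ on $[\alpha+1, \kappa)$ and taking the trivial value below.

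For each $\alpha \in C$ I would then set
$$F(\alpha) := \{ \beta : \exists r \in D_\alpha \text{ with } r^{\frown}(q \setminus (\alpha+1)) \Vdash \name{f}(\alpha) = \beta \}.$$
Since each $\beta \in F(\alpha)$ is witnessed by some $r \in \po_{\alpha+1}$, we have $|F(\alpha)| \leq |\po_{\alpha+1}|$; the parenthetical bound $|F(\alpha)| < \min(I \setminus (\alpha+1))$ then follows from hypothesis (1) of Lemma~\ref{Lemma: Fusion for an iteration}, which ensures that $\po_{\alpha+1}$ lies strictly below the next inaccessible.

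To verify $q \Vdash \name{f}(\alpha) \in F(\alpha)$ for $\alpha \in C$, fix a $V$-generic $G \ni q$ and set $G_{\alpha+1} = G \cap \po_{\alpha+1}$. Since $q \uhr (\alpha+1) \in G_{\alpha+1}$ and $D_\alpha$ is dense above it, genericity yields some $r \in G_{\alpha+1} \cap D_\alpha$. The tail $q \setminus (\alpha+1)$ is weaker than $q \in G$ and therefore lies in $G$; since its support is disjoint from that of $r$, the join $r^{\frown}(q \setminus (\alpha+1))$ is a legitimate condition extending both, and any common extension of $r$ and $q \setminus (\alpha+1)$ in $G$ automatically extends this join, placing the join itself in $G$ by downward closure. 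The join lies in $d(\alpha)$, so it decides $\name{f}(\alpha) = \beta$ for some $\beta \in F(\alpha)$, giving $\name{f}(\alpha)^G = \beta \in F(\alpha)$. The only delicate point is confirming that the specific join $r^{\frown}(q \setminus (\alpha+1))$ belongs to $G$; beyond that the proof is a routine bookkeeping application of the Fusion Lemma, so I anticipate no real obstacle.
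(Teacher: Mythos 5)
Your proposal is correct and follows essentially the same route as the paper: apply the Fusion Lemma to the dense-open sets of conditions deciding $\name{f}(\alpha)$, define $F(\alpha)$ as the set of values decided by conditions of the form $r^{\frown}(q\setminus(\alpha+1))$ with $r\in\po_{\alpha+1}$, and bound $|F(\alpha)|$ by $|\po_{\alpha+1}|$. The genericity verification you flag as delicate is handled correctly and is the standard argument the paper leaves implicit.
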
	
	\begin{proof}
		Apply Lemma \ref{Lemma: Fusion for a product} on the sequence $\la d(\alpha) \colon \alpha<\kappa \ra$, where each $d(\alpha)$ is the dense-open set of conditions deciding the value of $\name{f}(\alpha)$. Let $q\geq p$ be the obtained condition and $C\subseteq \kappa$ the obtained club, and define, for every $\alpha\in C$,
		$$ F(\alpha) = \{ \gamma\in \text{ON} \colon \exists r\geq q\uhr \alpha+1 \  (r^{\frown} (q\setminus \alpha+1) )\Vdash \name{f}(\alpha) = \check{\gamma} \}. $$
	\end{proof}

	The assumptions of Lemma \ref{Lemma: Fusion for a product} suffice to ensure that $\kappa$ is preserved after forcing with $\po$. But we obtain even more.
	
	\begin{Corollary} \label{Corollary: preservation of kappa plus}
		 Let $\po = \prod^{NS}_{\alpha\in I}\qo_\alpha$ be as in \ref{Lemma: Fusion for a product}.  Then there is no cofinal function $f\colon \kappa\to \kappa^{+}$ in $V[G]$.
	\end{Corollary}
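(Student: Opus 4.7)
The plan is a proof by contradiction via Corollary \ref{Corollary: evaluating new functions by old ones}. Suppose some $p\in\po$ forces $\name{f}\colon\kappa\to(\kappa^+)^V$ to be cofinal. Since Corollary \ref{Corollary: evaluating new functions by old ones} controls the value of a name only on a club, I would first replace $\name{f}$ by its monotone envelope $\name{s}$, defined by $\name{s}(\alpha)=\sup\{\name{f}(\beta)+1 : \beta\leq\alpha\}$ when the supremum is $<(\kappa^+)^V$ and $\name{s}(\alpha)=(\kappa^+)^V$ otherwise. Then $\name{s}$ is forced to be a non-decreasing ordinal-valued function dominating $\name{f}$ pointwise.

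Now apply Corollary \ref{Corollary: evaluating new functions by old ones} to $\name{s}$ to obtain $q\geq p$, a club $C\subseteq\kappa$, and $F\in V$ with $|F(\alpha)|\leq|\po_{\alpha+1}|<\kappa$ and $q\Vdash \name{s}(\alpha)\in F(\alpha)$ for every $\alpha\in C$. Set $A:=\bigcup_{\alpha\in C}F(\alpha)$, a set of size $\leq\kappa$ in $V$. If $(\kappa^+)^V\notin A$, then $A\subseteq(\kappa^+)^V$ and $\gamma^*:=\sup A<(\kappa^+)^V$; by monotonicity of $\name{s}$ and unboundedness of $C$, $q\Vdash \name{s}(\alpha)\leq\gamma^*$ for every $\alpha<\kappa$, hence $\name{f}$ is forced bounded by $\gamma^*$, contradicting cofinality.

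The remaining possibility is that $(\kappa^+)^V\in F(\alpha_0)$ for some $\alpha_0\in C$, so an extension $q^*\geq q$ forces $\name{f}\uhr(\alpha_0+1)$ to be cofinal in $(\kappa^+)^V$, producing a $V[G]$-sequence of length $<\kappa$ cofinal in $(\kappa^+)^V$. This is where the main obstacle lies. I would dispose of it by picking an inaccessible $\beta\in I$ with $\beta>\alpha_0$ and factoring $\po=\po_\beta*\po_{[\beta,\kappa)}$. The rank hypothesis in Lemma \ref{Lemma: Fusion for an iteration} yields $|\po_\beta|<\kappa$, so $\po_\beta$ is $\kappa$-c.c.\ and preserves the regularity of $(\kappa^+)^V$. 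At the same time, the $\alpha$-closure of each $\name{\qo}_\alpha$ together with $\delta$-completeness of the nonstationary ideal on every inaccessible $\delta\geq\beta$ yields that $\po_{[\beta,\kappa)}$ is $\beta$-closed in $V[G_\beta]$, and consequently adds no new $(\alpha_0+1)$-sequences of ordinals. Thus $\name{f}\uhr(\alpha_0+1)$ already lives in $V[G_\beta]$, where $(\kappa^+)^V$ is a regular cardinal above $\kappa$ — no sequence of length $\alpha_0+1<\kappa$ can be cofinal in it, a contradiction. The essential verification, which I expect to be the trickiest routine point, is that nonstationary-support iterations of $\beta$-closed forcings are themselves $\beta$-closed (one takes pointwise greatest lower bounds and uses that a union of fewer than $\delta$ nonstationary subsets of an inaccessible $\delta\geq\beta$ is still nonstationary).
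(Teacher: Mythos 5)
Your proposal is correct, and its core step is the same as the paper's: apply Corollary \ref{Corollary: evaluating new functions by old ones} to get a ground-model multifunction $F$ bounding the name on a club, take $A=\bigcup_{\alpha\in C}F(\alpha)\in V$ of size $\leq\kappa$, and use regularity of $\kappa^{+}$ in $V$ to bound the range. The genuine difference is that you treat the degenerate case in which the monotone envelope hits $(\kappa^+)^V$, i.e.\ some initial segment $\name{f}\uhr(\alpha_0+1)$ is already forced to be cofinal; the paper avoids this by simply taking $\name{f}$ to be \emph{increasing}, which silently presupposes $\cf^{V[G]}((\kappa^+)^V)\geq\kappa$ and so does not literally cover an arbitrary cofinal $f$. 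Your disposal of that case --- factor $\po=\po_\beta\ast\po_{[\beta,\kappa)}$ at an inaccessible $\beta>\alpha_0$, note $\po_\beta$ is small (the same cardinality bound $|\po_{\alpha+1}|<\min(I\setminus\alpha+1)$ that the paper itself invokes in Corollary \ref{Corollary: evaluating new functions by old ones}) hence preserves the regularity of $(\kappa^+)^V$, and that the tail is $\beta$-closed by $\alpha$-closure of the iterands together with $\alpha$-completeness of the nonstationary ideal on each inaccessible $\alpha\geq\beta$, so $\name{f}\uhr(\alpha_0+1)$ already lives in $V[G_\beta]$ --- is correct and is the standard argument. So your write-up is, if anything, more complete than the paper's; the price is the extra factoring machinery, which the paper's streamlined version does not need once one grants the reduction to increasing functions.
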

	
	\begin{proof}
		 Let $p\in \po$ and $\name{f}$ be a $\po$-name for an increasing function from $\kappa$ to $\kappa^+$. We wish to prove that $\text{Im}(f)$ is forced by an extension of $p$ to be bounded in $\kappa^+$.  By \ref{Corollary: evaluating new functions by old ones} there exists $q\geq p$ and a club $C\subseteq \kappa$ such that for every $\alpha\in C$, $q\Vdash \name{f}(\alpha)\in F(\alpha)$ and $|F(\alpha)|< \min(I\setminus (\alpha+1)) < \kappa$. Let $\gamma = \sup\{ \sup(F(\alpha))  \colon \alpha\in C \}+1$. Then $\gamma<\kappa^+$ and, since $C$ is unbounded in $\kappa$ and $\name{f}$ is increasing,  $q\Vdash \sup(\mbox{Im}(\name{f})) < \gamma$. 
	\end{proof}

	In particular, Corollary \ref{Corollary: preservation of kappa plus} implies that $\po$ preserves $\kappa^{+}$. 
	Assuming further that GCH holds in $V$, and that each participating forcing $\name{\qo}_\alpha$ preserves cardinals, we can strengthen the conclusion to full preservation of cardinals by $\po$. Without the GCH assumption, however, this need not hold (see Claim \ref{Claim: Splitting might collapse cardinals}).
	
	\begin{Corollary}\label{Corollary: if the iterands preserve cards and GCH then the entire iteration peserves cards}
	Assume GCH. Let $\po = \prod_{\alpha\in I}^{NS}\qo_\alpha$ be as in \ref{Lemma: Fusion for a product}. Assume that for every $\alpha\in I$, $\Vdash_{\po_\alpha} "\qo_\alpha \text{ preserves cardinals"}$. Then $\po$ preserves cardinals.
	\end{Corollary}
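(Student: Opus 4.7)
The plan is to split the argument by cardinal regime and run an induction on $\alpha \in I$ to bootstrap between levels. The three regimes are cardinals above $\kappa^+$, the cardinals $\kappa$ and $\kappa^+$ themselves, and cardinals below $\kappa$.

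For cardinals above $\kappa^+$, I would first verify by induction on $\alpha \in I \cup \{\kappa\}$ the size bound $|\po_\alpha| \leq \alpha^+$. Under GCH, the number of nowhere-stationary subsets of $\alpha$ is at most $2^\alpha = \alpha^+$; for each such support $S$, the total number of value-sequences is $\prod_{\beta \in S} |\name{\qo}_\beta|$, and the rank restriction of Lemma \ref{Lemma: Fusion for an iteration} combined with the inductive bound on $|\po_\beta|$ forces each $|\name{\qo}_\beta|$ to lie strictly below $\alpha$, giving $\alpha^\alpha = \alpha^+$ in total. So $\po_\kappa$ has the $\kappa^{++}$-cc and preserves all cardinals $\geq \kappa^{++}$. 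The cardinals $\kappa$ and $\kappa^+$ are already handled: $\kappa$ by the remark preceding Corollary \ref{Corollary: preservation of kappa plus}, and $\kappa^+$ by Corollary \ref{Corollary: preservation of kappa plus} combined with the preservation of $\kappa$.

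For a cardinal $\mu < \kappa$, I would fix an inaccessible $\alpha \in I$ with $\alpha > \mu$, factor $\po_\kappa \cong \po_\alpha * \po^{\geq \alpha}$ (where $\po^{\geq \alpha}$ consists of those conditions whose support lies in $[\alpha, \kappa)$), and show that $\po^{\geq \alpha}$ is $\alpha$-closed in $V^{\po_\alpha}$. Coordinate-wise, each $\name{\qo}_\beta$ with $\beta \in I$ and $\beta \geq \alpha$ is forced to be $\beta$-closed hence $\alpha$-closed; and the union of $<\alpha$ nowhere-stationary subsets of $[\alpha, \kappa)$ remains nowhere stationary, because for each inaccessible $\gamma > \alpha$ we have $\gamma > \alpha^+$ (since $\gamma$ is a strong limit), so $\cf(\gamma) = \gamma > \alpha$ and a $<\alpha$-union of nonstationary subsets of $\gamma$ is nonstationary. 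The $\alpha^{++}$-cc of $\po_\alpha$ (obtained from the size bound) ensures that such $\gamma$ remain regular and that stationarity in $\gamma$ is absolute between $V$ and $V^{\po_\alpha}$. Thus $\po^{\geq \alpha}$ is in particular $\mu^+$-closed and preserves $\mu$.

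It remains to show $\po_\alpha$ preserves $\mu$, which I would do by induction on $\alpha \in I$. If $I \cap \alpha$ has a maximum $\beta$, then $\po_\alpha \cong \po_\beta * \name{\qo}_\beta$ and the inductive hypothesis on $\po_\beta$ combined with the cardinal-preservation assumption on $\qo_\beta$ suffices. If instead $\alpha$ is a limit of $I$, then the same three-regime argument applies verbatim to $\po_\alpha$ in place of $\po_\kappa$: the $\alpha^{++}$-cc handles cardinals $\geq \alpha^{++}$, the Fusion Lemma and Corollary \ref{Corollary: preservation of kappa plus} applied at level $\alpha$ handle $\alpha$ and $\alpha^+$, and for $\mu' < \alpha$ the factoring argument above reduces to a smaller $\po_{\alpha'}$. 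The main obstacle is the verification of $\alpha$-closure of the tail forcing in $V^{\po_\alpha}$, which hinges on the nowhere-stationary union calculation at inaccessibles $\gamma > \alpha$ (resting on $\gamma$ being a strong limit above $\alpha$) and the observation that $\po_\alpha$ preserves this stationarity structure via its $\alpha^{++}$-cc; the remainder of the proof is routine bookkeeping.
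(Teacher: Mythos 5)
Your proposal is correct and follows essentially the same route as the paper: bound $|\po_\alpha|\leq\alpha^+$ under GCH to handle cardinals $\geq\kappa^{++}$, invoke Corollary \ref{Corollary: preservation of kappa plus} for $\kappa$ and $\kappa^+$, and for $\mu<\kappa$ factor into a small head and a sufficiently closed nonstationary-support tail. The only difference is organizational — the paper runs a least-counterexample argument (reducing the limit-of-$I$ case directly to Corollary \ref{Corollary: preservation of kappa plus} applied at level $\lambda$) where you run an explicit induction on $\alpha\in I$ — but the ingredients are identical.
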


	\begin{proof}
	Assume otherwise, and let $\mu$ be the least cardinal which is not preserved. It follows that $\mu$ is a successor cardinal. We can't have $\mu\geq \kappa^{++}$ since $|\po| = \kappa^{+}$ by our assumption that $2^\kappa = \kappa^+$. So $\mu = \lambda^+$ for some $\lambda\leq \kappa$, and by Corollary \ref{Corollary: preservation of kappa plus}, we can assume that $\lambda<\kappa$. The forcing $\po\setminus (\lambda+1)$ is more than $\lambda^{+}$-closed, and, if $\lambda\in I$, the forcing $\qo_\lambda$ preserves $\lambda^+$. So if $\mu = \lambda^+$ is collapsed, this is due to $\po\uhr \lambda$. If $\lambda$ is not a limit point of $I$, then by $\mbox{GCH}$, $|\po \uhr \lambda| \leq \lambda^+$ and thus $\mu$ is preserved. So we can assume that $\lambda$ is a limit point of $I$, and $\po_\lambda$ collapses $\mu = \lambda^+$. This contradicts Corollary \ref{Corollary: preservation of kappa plus}. 
	\end{proof}
	
	We conclude this section with the following classical application of Corollary \ref{Corollary: evaluating new functions by old ones}.
	
	\begin{corollary}\label{Corollary: density of old clubs}
		Assume that $\kappa$ is regular. For every club $C\subseteq \kappa$ in $V[G]$, there exists a club $D\subseteq \kappa$ in $V$ such that $D\subseteq C$.
	\end{corollary}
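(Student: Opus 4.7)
The plan is to reduce the corollary to Corollary \ref{Corollary: evaluating new functions by old ones} by applying it to the ``next-element'' function of the club $C$. Concretely, fix a $\po$-name $\name{C}$ and a condition $p\in G$ forcing $\name{C}$ to be a club in $\kappa$, and consider the name $\name{g}$ defined by $\name{g}(\alpha) = \min(\name{C}\setminus \alpha)$, which $p$ forces to be a function from $\kappa$ to $\kappa$. Applying Corollary \ref{Corollary: evaluating new functions by old ones} to $\name{g}$, I obtain $q\geq p$, a club $C^*\subseteq \kappa$ in $V$, and a function $F\in V$ with $q\Vdash \name{g}(\alpha)\in F(\alpha)$ and $|F(\alpha)|<\min(I\setminus \alpha+1)<\kappa$ for every $\alpha\in C^*$. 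I then define $h\in V$ by $h(\alpha) = \sup(F(\alpha)\cap \kappa)$; since $\kappa$ is regular and $|F(\alpha)\cap \kappa|<\kappa$, this stays below $\kappa$.

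Next, I would form the ground-model club $D$ consisting of those $\alpha<\kappa$ that are limit points of $C^*$ and satisfy $h[C^*\cap \alpha]\subseteq \alpha$; this is a club by the standard closure-point argument. The claim is that $q\Vdash \check{D}\subseteq \name{C}$. Indeed, for any $\alpha\in D$ and any $\beta<\alpha$, there is $\beta'\in C^*\cap \alpha$ with $\beta<\beta'$, and $q\Vdash \name{g}(\beta')\leq h(\beta')<\alpha$, so $q$ forces that $\name{C}$ meets the interval $[\beta',\alpha)$. Hence $q\Vdash \name{C}\cap \alpha$ is unbounded in $\alpha$, and closedness of $\name{C}$ forces $\alpha\in \name{C}$.

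Finally, a density argument finishes the corollary: the construction above applies to any extension of $p$ in $\po$, so the set of conditions $q\in \po$ for which there exists $D\in V$ with $q\Vdash \check{D}\subseteq \name{C}$ is dense below $p$. Since $p\in G$, some such $q$ lies in $G$, and the corresponding $D\in V$ satisfies $D\subseteq C$ in $V[G]$.

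The only point that needs care is ensuring $h(\alpha)<\kappa$, which is precisely where the bound $|F(\alpha)|<\min(I\setminus \alpha+1)$ from the fusion-based Corollary \ref{Corollary: evaluating new functions by old ones} is used, together with regularity of $\kappa$; apart from this, the argument is entirely a standard closure-point manipulation and should present no real obstacle.
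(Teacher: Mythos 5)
Your argument is correct and takes essentially the same route as the paper: both apply Corollary \ref{Corollary: evaluating new functions by old ones} to a function coding the club $C$ (you use the next-element function $\alpha \mapsto \min(C\setminus\alpha)$, the paper uses the increasing enumeration of $C$) and then let $D$ be the ground-model club of closure points of the resulting dominating function from $V$. The only cosmetic difference is that you keep the domination restricted to the ground-model club $C^*$ and fold $C^*$ into the definition of $D$, whereas the paper upgrades to domination everywhere first; both work.
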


	\begin{proof}
		Let $f\in V[G]$ be the increasing enumeration of $C$. Apply Corollary \ref{Corollary: evaluating new functions by old ones} to find, in $V$, a function $g\colon \kappa\to \kappa$ such that $g$ dominates $f$ on a (ground model) club. Since $f$ is increasing and $\kappa$ is regular, we can assume that $g$ is increasing an dominates $f$ everywhere. Now define $D$ to be the club of closure points of $g$. Then $D\subseteq C$, since every $\alpha\in D$ is a limit point of $C\cap \alpha$: indeed, if $i< \alpha$ then $f(i)\leq g(i)< \alpha$ and $i\leq f(i)\in C$.
	\end{proof}

\section{The splitting forcing}\label{Section: The splitting forcing}
	
	The goal of this section is to prove Theorem \ref{Theorem: controlling normal measures}.
	
	Let $\kappa$ be a measurable in $V$. Denote by $I$ the set of inaccessible cardinals below $\kappa$. Fix throughout this section an ordinal $\tau\leq \kappa^+$. For every $\eta<\kappa^+$, let $f_\eta$ be the $\eta$-th canonical function. Let $f_{\kappa^+}$ be the successor cardinal function. 
	
	Let us define the main forcing notion in this paper. We refer to it as "the splitting forcing". 
	
	\begin{definition}[The Splitting forcing]\label{Definition: the splitting forcing}
		Define	
		$$\po^\tau = \{  f\in \prod_{\alpha\in X}f_\tau(\alpha) \colon X= \dom(f) \text{ is a nowhere stationary subset of }I \}$$
		ordered by inclusion,  that is, $f$ extends $g$ if and only if $g\subseteq f$. \\
		Note that in the case $\tau < \kappa$, $\po^\tau$ admits the simpler presentation
		$$\po^\tau = \{  f\colon X \to \tau \colon X= \dom(f) \text{ is a nowhere stationary subset of }I \}.$$
	\end{definition}

	The forcing notion $\po^{\tau}$ can be viewed as a nonstationary support product  $\prod^{NS}_{\alpha\in I} \qo_\alpha$, where, for every $\alpha<\kappa$, $\qo_\alpha$ is trivial unless $\alpha$ is inaccessible, and then $\qo_\alpha =\{ 0_{\qo_\alpha} \}\cup f_\tau(\alpha)$ is the atomic forcing notion that satisfies:
	\begin{itemize}
		\item $0_{\qo_\alpha}$ is the weakest condition of $\qo_\alpha$.
		\item the ordinals in $f_\tau(\alpha)$ form a maximal antichain in $\qo_\alpha$.
	\end{itemize}
	In particular, a generic for $\qo_\alpha$ is basically a choice of an ordinal in $f_\tau(\alpha)$.

	\begin{remark}\label{Remark: limitation on tau}
		There are two main reasons for the limitation $\tau\leq \kappa^+$ in this section.
		
		\begin{enumerate}
			\item As a nonstationary-support product, $\po^{\tau}$ can be analyzed with the Fusion Lemma \ref{Lemma: Fusion for a product}. To use it, however, we must ensure that $\qo_\alpha \in V_\kappa$ for all $\alpha$. If $2^\kappa = \kappa^{+}$, then $\tau = \kappa^{++}$ cannot be represented by a function $f_\tau$ whose values lie below $\kappa$, and the forcings $\qo_\alpha$ turn out to be quite large. On the other hand, if GCH fails at $\kappa$ and $U$ is a normal measure that computes $\kappa^{++}$ correctly in its ultrapower, then forcing with $\po^{\tau}$ may collapse $\kappa^{++}$ (see Claim \ref{Claim: Splitting might collapse cardinals}).
			\item The first reason doesn't rule out the possibility that $\tau$ is any ordinal below the least inaccessible above $\kappa$ in $\text{Ult}(V,U)$, where $U$ is the normal measure we would like to lift.\footnote{Working with such $\tau$, we can ensures that $|\qo_\alpha|$ is below the amount of closure of the forcing $\po^{\tau}\setminus \alpha+1$, which is one of the assumptions of the Fusion Lemma.} Relaxing the restriction $\tau<\kappa^{+}$ makes the classification of normal measures more complicated then the one exhibited in Theorem \ref{Theorem: controlling normal measures}. We study this more general framework in section \ref{Section: Friedman–Magidor for extenders}, and produce the  classification of normal measures in Theorem \ref{Theorem: Classification of normal measures in the more general context}.
		\end{enumerate}
	\end{remark}

	\begin{lemma}
		Assume that $G\subseteq \po^\tau$ is generic over $V$. For every $\eta<\tau$, define--
		$$S_\eta = \{ \alpha\in I \colon{(\cup G)}(\alpha)= f_\eta(\alpha)  \}$$
		Then $\vec{S} = \la S_\eta \colon \eta<\tau \ra$ is a sequence of stationary subsets of $I$. If $\tau<\kappa$, $\vec{S}$ is a partition of $I$ and $V[G] = V[\vec{S}]$.
	\end{lemma}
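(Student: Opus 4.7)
First I would check that $\cup G$ is a total function on $I$: for any $\alpha\in I$ and any $p\in\po^\tau$ with $\alpha\notin\dom(p)$, pick any $\gamma\in f_\tau(\alpha)$ and extend $p$ by setting $p(\alpha)=\gamma$; the new support $\supp(p)\cup\{\alpha\}$ remains nowhere stationary (adding a single ordinal to a set that is nonstationary in each inaccessible preserves this), so by density $(\cup G)(\alpha)\in f_\tau(\alpha)$ is defined for every $\alpha\in I$.

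For the stationarity of each $S_\eta$, I would invoke Corollary \ref{Corollary: density of old clubs}, reducing the task to showing that $S_\eta$ meets every ground-model club $D\subseteq\kappa$. Set $C_\eta=\{\alpha\in I : f_\eta(\alpha)<f_\tau(\alpha)\}$; this is a ground-model club by the standard properties of canonical functions. Given $p\in\po^\tau$ and such a $D$, since $\supp(p)$ is nonstationary one can pick $\alpha\in(D\cap C_\eta)\setminus\supp(p)$ and extend $p$ to $q=p\cup\{(\alpha,f_\eta(\alpha))\}$. As above, $q\in\po^\tau$, and $q\Vdash\alpha\in S_\eta\cap D$, so the set of conditions forcing $S_\eta\cap D\neq\emptyset$ is dense, proving $S_\eta$ is stationary in $V[G]$.

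Now assume $\tau\leq\kappa$. I would appeal to the standard fact that there is a ground-model club $C^*\subseteq I$ on which the map $\eta\mapsto f_\eta(\alpha)$ is injective for $\eta<\tau$ with range exactly $f_\tau(\alpha)$ (for $\tau<\kappa$, use $f_\eta(\alpha)=\eta<\tau=f_\tau(\alpha)$ on a tail; for $\tau=\kappa$, use $f_\kappa(\alpha)=\alpha$ together with a diagonal intersection of the $\kappa$ relevant clubs). On $C^*$ the $S_\eta$ are pairwise disjoint and cover $I\cap C^*$; by choosing canonical representatives of the $f_\eta$ (or otherwise absorbing the nonstationary set $I\setminus C^*$) one obtains a partition of all of $I$.

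Finally, $V[\vec S]\subseteq V[G]$ is immediate, since each $S_\eta$ is definable from $G$. Conversely, in $V[\vec S]$ the function $\cup G$ is recovered by $(\cup G)(\alpha)=f_{\eta(\alpha)}(\alpha)$, where $\eta(\alpha)<\tau$ is the unique index with $\alpha\in S_{\eta(\alpha)}$; since the $f_\eta$'s lie in $V$, this computation takes place in $V[\vec S]$, and $G=\{p\in\po^\tau : p\subseteq\cup G\}$ is then also in $V[\vec S]$, giving $V[G]=V[\vec S]$. The only real obstacle is the canonical-function bookkeeping behind the partition claim, ensuring the partition is exact rather than just holding modulo a nonstationary set; once that is handled, the stationarity and the recovery of $G$ are straightforward density arguments.
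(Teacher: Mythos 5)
Your proof is correct and follows essentially the same route as the paper: stationarity via Corollary \ref{Corollary: density of old clubs} plus the freedom to extend a condition at a point of a ground-model club outside its nowhere-stationary support, and recovery of $G$ from $\vec S$ using the ground-model functions $f_\eta$. The only difference is that you make explicit the canonical-function bookkeeping (that $f_\eta(\alpha)<f_\tau(\alpha)$ on a club, and the injectivity/surjectivity of $\eta\mapsto f_\eta(\alpha)$ needed for the exact partition), which the paper's proof leaves implicit.
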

	
	\begin{proof}
		Fix $\eta<\tau$, and let $C\in V[G]$ be a club in $\kappa$. By Corollary \ref{Corollary: density of old clubs} there exists in $V$ a club $D\subseteq \kappa$ such that $D\subseteq C$. Given a condition $p\in \po^\tau$, we can pick $\alpha\in D\setminus \supp(p)$ due to the nonstationary support, and extend $p$ to $p\cup \{ (\alpha, f_\eta(\alpha)) \}$. Thus, by density, there exists $q\in G$ and $\alpha\in D$ such that $q(\alpha) =f_\eta(\alpha)$. Thus, in $V[G]$,  $\alpha\in C\cap S_\eta$.
		
		If $\tau< \kappa$ then each $f_{\eta}(\alpha)$ simply equals $\eta$, and $S_\eta, S_{\eta'}$ for $\eta\neq \eta'$ are disjoint. Also, $G$ could be read from $\vec{S}$, since for every condition $p$, $p\in G$ if and only if
		$$\forall \alpha\in \dom(p) \  (p(\alpha) = \eta \iff \alpha\in S_\eta). $$
	\end{proof}
	
	Next, we show that every normal measure $U\in V$ on $\kappa$ generates, together with one of the stationary sets $S_\eta$, a normal measure on $\kappa$ in $V[G]$.

	\begin{lemma}\label{Lemma: lifting normal measures}
			Let $U\in V$ be a normal measure on $\kappa$. Let $G\subseteq \po^{\tau}$ be generic over $V$. Then for every $\eta<\tau$, $U\cup \{S_\eta\}$ generates a normal measure $U^*_\eta$ on $\kappa$ in $V[G]$. 	Furthermore, the $U^*_\eta$-s have the same ultrapower over $V[G]$.
	\end{lemma}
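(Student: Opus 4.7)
The plan is a standard Silver-style master-condition lifting. Let $j_U : V \to M = \Ult(V,U)$. Inside $M$, the forcing $j_U(\po^\tau)$ decomposes as $\po^\tau \times \qo_\kappa \times \po^{\mathrm{tail}}$ (initial segment below $\kappa$, atomic forcing at $\kappa$, and nonstationary-support product over $j_U(I) \cap (\kappa, j_U(\kappa))$). For $p \in G$, the support $\dom p$ is nowhere stationary and hence outside $U$, so $\kappa \notin j_U(\dom p)$; thus the ``shift'' $p^\dagger := j_U(p) \uhr (\kappa, j_U(\kappa))$ lies in $\po^{\mathrm{tail}}$ with no value at $\kappa$, and the family $\{p^\dagger : p \in G\}$ is pairwise compatible.

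The crux is to construct, inside $V[G]$, a filter $H^{>\kappa} \subseteq \po^{\mathrm{tail}}$ generic over $M[G]$ and containing every master condition $p^\dagger$. One uses the closure of $M$ under $\kappa$-sequences from $V$ together with the iteration-fusion property of $\po^{\mathrm{tail}}$ (Lemma~\ref{Lemma: Fusion for an iteration}) to obtain sufficient distributivity in $M[G]$ to meet all its dense open sets while threading through the master conditions $\{p^\dagger : p \in G\}$. Setting $H_\eta := \{(\kappa,\eta)\} \cup H^{>\kappa}$, the embedding $j^*_\eta : V[G] \to M[G \ast H_\eta]$ defined by $j^*_\eta(\dot x^G) := j_U(\dot x)^{G \ast H_\eta}$ lifts $j_U$. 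Define $U^*_\eta := \{A \in \power(\kappa) \cap V[G] : \kappa \in j^*_\eta(A)\}$; standard derived-ultrafilter arguments make this a normal ultrafilter in $V[G]$. It contains $U$ (as $j^*_\eta\uhr V = j_U$), and it contains $S_\eta$ since $H_\eta(\kappa) = \eta = j_U(f_\eta)(\kappa)$ witnesses $\kappa \in j^*_\eta(S_\eta)$.

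To see $U^*_\eta$ equals the filter $F_\eta$ generated by $U \cup \{S_\eta\}$, it suffices to show $F_\eta$ is ultra (clearly $F_\eta \subseteq U^*_\eta$). Given a name $\dot A$ for a subset of $\kappa$, apply fusion to the dense sets of conditions deciding ``$\alpha \in \dot A$'' under each atomic extension $\{(\alpha,\eta')\}$, producing $q \geq p$ in $G$ and $D \in U$ with $\dom q \cap D = \emptyset$ such that, for every $\alpha \in D$ and $\eta' < f_\tau(\alpha)$, the condition $q \uhr (\alpha+1) \cup \{(\alpha, \eta')\}$ decides ``$\alpha \in \dot A$''. Let $B := \{\alpha \in D : q \uhr (\alpha+1) \cup \{(\alpha, f_\eta(\alpha))\} \Vdash \alpha \in \dot A\} \in V$. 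For $\alpha \in B \cap S_\eta$, the deciding condition lies in $G$, giving $\alpha \in A$; symmetrically $(D \setminus B) \cap S_\eta \cap A = \emptyset$. Since $D \in U$ and $U$ is ultra, exactly one of $B, D \setminus B$ lies in $U$, so $F_\eta$ decides $A$.

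For the same-ultrapower claim: since $\eta < \tau \leq \kappa^+$ is a ground-model ordinal, $\{(\kappa,\eta)\}$ adjoins nothing to $M[G]$, so all $M[G \ast H_\eta]$ coincide with a single model $N := M[G \ast H^{>\kappa}]$. The transitive collapse of $\Ult(V[G], U^*_\eta)$ embeds into $N$ as $\{j^*_\eta(g)(\kappa) : g \in V[G]^\kappa \cap V[G]\}$. For $g \in V$, this gives all of $M$; for $g := \cup G \in V[G]$, we get $j^*_\eta(g) = \cup(G \ast H_\eta)$, from which $\eta$, $H^{>\kappa}$, and $G$ are all recoverable, so the ultrapower contains $M[G \ast H^{>\kappa}] = N$ and hence equals $N$, independently of $\eta$. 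The principal obstacle in this plan is the construction of $H^{>\kappa}$ in the second paragraph: the fusion machinery must be carefully interleaved with the master-condition requirements.
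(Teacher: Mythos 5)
Your overall architecture (Silver lifting via master conditions $j_U(p)\cup\{\la\kappa,\eta\ra\}$, deriving $U^*_\eta$ from the seed $\kappa$, and the mutual-definability argument for the common ultrapower) matches the paper's, but the step you yourself flag as ``the principal obstacle'' --- producing the tail generic $H^{>\kappa}$ --- is exactly where the proof lives, and the route you sketch through it does not work. You propose to build $H^{>\kappa}$ in $V[G]$ by a distributivity/diagonalization over the dense open subsets of $\po^{\mathrm{tail}}$ lying in $M_U[G]$, ``threaded through'' the master conditions. Counting in $V$ (even under GCH), $M_U$ has $|j_U(2^{|\po^\tau|})|=\kappa^{++}$ many dense open subsets of $j_U(\po^\tau)$, while the only closure of $\po^{\mathrm{tail}}$ usable from $V[G]$ comes from the closure of $M_U[G]$ under $\kappa$-sequences lying in $V[G]$: a descending chain of length $\kappa^+$ built in $V[G]$ need not belong to $M_U$ at all, so the diagonalization cannot reach all the dense sets. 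No interleaving with fusion rescues this as stated.

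The paper's resolution is that nothing new needs to be constructed: the filter $H_\eta$ generated by the master conditions themselves is already generic over $M_U$. Given a dense open $E\in M_U$, write $E=[\alpha\mapsto e(\alpha)]_U$ and apply the Fusion Lemma \emph{in $V$} to the sequence $\la e(\alpha)\colon\alpha<\kappa\ra$; a density argument yields a fused $q\in G$ and a club $C$ such that for $\alpha\in C$ the set $\{r\in\po_{\alpha+1}\colon r^{\frown}(q\setminus\alpha+1)\in e(\alpha)\}$ is dense above $q\uhr\alpha+1$. Since $\kappa\in j_U(C)$, reflection shows that membership of $r^{\frown}(j_U(q)\setminus\kappa+1)$ in $E$ is decided entirely by the coordinates $\leq\kappa$, where $G$ together with the atom $\eta$ is generic; openness of $E$ then places $j_U(p)\cup\{\la\kappa,\eta\ra\}$ in $E$ for a suitable $p\in G$ extending $q$. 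In other words, fusion is applied in $V$ to the $U$-indexed representation of each single dense set of $M_U$, not in $V[G]$ to the whole family at once. Your third paragraph (the direct argument that the filter generated by $U\cup\{S_\eta\}$ is ultra) is sound and close in spirit to the paper's, but $\kappa$-completeness and normality of $U^*_\eta$ still rest on the existence of the lifted embedding, so the gap in your second paragraph is load-bearing.
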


	\begin{proof}
		Since $\tau$ is fixed, we denote $\po = \po^\tau$ throughout this proof. Fix $\eta<\tau$. The forcing $j_U(\po)$ is, by elementarity, a nonstationary support product $\prod^{NS}_{\alpha\in j_U(I)}\qo^{M_U}_\alpha$, where, for each $\alpha\in j_U(I)\cap \kappa = I$, $\qo^{M_U}_\alpha = \qo_\alpha$, and for each $\alpha\in j_U(I)\setminus \kappa$, $\qo^{M_U}_\alpha$ is an atomic forcing that chooses an ordinal below $j_U(f_\tau)(\alpha)$. In particular, $\kappa\in j_U(I)$ and $\qo^{M_U}_\kappa$ is an atomic forcing that chooses an ordinal below $\tau$. Let 
		$$ H_\eta = \{ q\in j_U(\po) \colon \exists p\in G \  \left(q\subseteq j_U(p)\cup \{ \la \kappa, \eta \ra \} \right) \}. $$
		Note that for every $p\in \po$, $\kappa\notin \supp(j_U(p))$. Therefore $ j_U(p)\cup \{ \la \kappa, \eta \ra \} $ is a function and its support is a nowhere stationary subset of $j_U(\kappa)$, so $H_\eta \subseteq j_U(\po)$. Furthermore, the elements of $H_\eta$ are mutually compatible and clearly $j_U[G]\subseteq H_\eta$. We argue that $H_\eta\subseteq j_U(\po)$ is generic over $M_U$.

		Fix $E\subseteq j_U(\po)$ dense open in $M_U$. Let $\langle e(\alpha) \colon \alpha<\kappa \rangle$ be a sequence of dense open subsets of $\po$ such that $E = [\alpha\mapsto e(\alpha)]_U$. Recall that $\po$ can be viewed as a nonstationary support product. Apply the Fusion Lemma \ref{Lemma: Fusion for a product} to find a condition $q\in G$ and a club $C\subseteq \kappa$ such that for every $\alpha\in C$,
		$$\{ r\in \po_{\alpha+1} \colon r^{\frown} (q\setminus \alpha+1) \in e(\alpha) \}$$
		is dense in $\po_{\alpha+1}$. This means that, in $M_U$, the set--
		$$\{ r\in j_U(\po)_{\kappa+1} \colon r^{\frown} (j_U(q)\setminus \alpha+1)\in E \}$$
		is dense. Since $G$ is generic for $j_U(\po)_\kappa = \po_\kappa$ (over $M_U$), and $\eta$ is generic for $\qo^{M_U}_{\kappa}$ (over $M_U[G]$), there exists $p\in G$ such that $p^{\frown} \langle (\kappa, \eta) \rangle^{\frown} (j_U(q)\setminus \kappa+1)\in E $. By increasing $p$ if necessary inside $G$, we can assume that $p$ extends $q$. Since $E$ is open, we deduce that $j_U(p)\cup \{ \la \kappa, \eta \ra  \}\in E\cap H_i$, as desired.\\
		
		Since $j_U[G]\subseteq H_\eta$, we can now use Silver's criterion to lift the embedding $j_U$ to an elementary embedding from $V[G]$ to $M_U[H_\eta]$. By standard arguments (see \cite{cummings2009iteratedforcingsandelemembs}), the lifted embedding is an ultrapower embedding with respect to a normal measure $U^*_\eta$. Indeed, $U^*_{\eta}$ is the normal measure derived from the lifted embedding using $\kappa$ as a seed. In other words, for $X\in V[G]$, we have $X\in U^*_\eta$ if and only if there exists a condition in $H_\eta$ that forces that $\kappa\in j_U(\name{X})$ (where $\name{X}$ is a $\po$-name for $X$). Equivalently, there exists a condition $p\in G$ such that 
		$$ j_U(p)\cup \{ \la \kappa, \eta \ra \}\Vdash \kappa\in j_U(\name{X}). $$
		This implies that $\{ \alpha<\kappa \colon p\cup \{ \la \alpha,f_\eta(\alpha) \ra \}    \}\in U$, and
		$$ \{ \alpha<\kappa \colon p\cup \{ \la \alpha,f_\eta(\alpha) \ra \} \Vdash \check{\alpha}\in \name{X}     \}\cap \{ \alpha<\kappa \colon (\cup G)(\alpha) = f_\eta(\alpha) \}\subseteq X $$
		so $U\cup \{ S_i \} $ generates $U^*_i$.
		
		Note that for every $\eta,\zeta<\tau$, $H_\eta, H_\zeta$ are mutually definable: one is obtained from the other by changing the generic value at $\kappa$.\footnote{The fact that we used a product rather than an iteration matters here: if we forced with an iteration, and $q'\in H_\eta$ is obtained from $q$ by replacing $\la \kappa, \eta \ra$ (assuming that $q\uhr \kappa$ forces that $\la \kappa, \eta \ra$ belongs to $q$) with $\la \kappa, \zeta \ra$, the fact that $q\subseteq j_U(p)\cup \{ \la \kappa, \eta \ra \}$ for some $p\in G$ does not necessarily imply that $q'\subseteq j_U(p')\cup \{\la \kappa, \zeta \ra\}$ for some $p'\in G$. Using products ensures that $q\setminus \kappa+1$ doesn't depend on $q(\kappa)$, and thus $q'$ is compatible with $H_\zeta$.} This implies that all the measures $U_\eta$ ($\eta<\tau$) have the same ultrapower. 
		\end{proof}

		We now proceed to prove that every normal measure of $V[G]$ has the form $U^*_\eta$ for some $\eta<\tau$ and normal measure $U\in V$. The main tool that assists bypassing inner-model-theoretic arguments here, is Hamkins' Gap forcing theorem.
		
		\begin{theorem}[Hamkins' Gap Forcing theorem \cite{hamkins2001gap}]\label{Theorem: Hamkins Gap forcing thm}
		Assume that $\po$ is a forcing notion which has a gap at some cardinal $\delta$: that is, $\po$ can be factored to the form $\po = \po_0*\po_1$, where $\po_0$ is nontrivial,  $|\po_0|<\delta$ and $\Vdash_{\po_0} "\po_1 \text{ is }(\delta+1)\text{-strategically closed."}$ Let $G\subseteq \po$ be generic over $V$, and assume that $j\colon V[G]\to M^*$ is an elementary embedding with critical point $\kappa > \delta$, such that $M^*\subseteq V[G]$ and $M^*$ is closed under $\delta$-sequences of its elements that belong to $V[G]$. Then:
		\begin{enumerate}
			\item $M^*$ has the form $M[H]$ where $M\subseteq V$ and $H = j(G)$ is $j(\po)$-generic over $M$. Furthermore, $M = V\cap M[H]$.
			\item If $j$ is definable in $V[G]$ from parameters, then $j\uhr V\colon V\to M$ is definable in $V$ from parameters.
		\end{enumerate}
		\end{theorem}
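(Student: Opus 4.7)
The plan is to exploit the factorization $\po = \po_0 * \po_1$ together with $|\po_0| < \delta < \kappa = \mathrm{crit}(j)$. Writing $G = G_0 * G_1$, the embedding $j$ fixes $\po_0$ and $G_0$ pointwise (since they sit at rank $<\kappa$), so $j(\po) = \po_0 * j(\po_1)$ and $H := j(G) = G_0 * j(G_1)$. The goal of (1) is to show that $j \uhr V$ arises as the ultrapower embedding $j_E$ by some extender $E \in V$; setting $M = \mathrm{Ult}(V,E)$ then fits the bill, and $M^* = M[H]$ follows.

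The first key ingredient is the \textbf{approximation lemma}: every $\delta$-sequence of ordinals in $V[G]$ lies in $V[G_0]$. This uses only $(\delta+1)$-strategic closure of $\po_1$ over $V[G_0]$: given a $\po_1$-name $\name{\sigma}$ for such a sequence, chase Player II's winning strategy through a descending $\delta$-sequence of conditions, each deciding one more entry; the strategy's final move supplies a bound deciding the whole sequence. Consequently $V[G]$ and $V[G_0]$ agree on sets of hereditary size $<\delta$.

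The heart of the argument is the \textbf{ground-model extender extraction}. Form the $(\kappa,\lambda)$-extender $E$ derived from $j \uhr V$ in $V[G]$ and show $E \in V$. Each slice $E_a = \{ X \in V \colon X \subseteq [\kappa]^{|a|},\, a \in j(X) \}$ is determined by its characteristic function on a $V$-basis, which I would organize into $<\delta$-sized chunks indexed over $V_\kappa$; each chunk lies in $V[G_0]$ by the approximation lemma, so $E \in V[G_0]$. The descent from $V[G_0]$ down to $V$ is handled by a symmetry-of-names argument: since the definition of each $E_a$ refers only to the action of $j$ on $V$-sets, any other $V$-generic $G_0'$ for $\po_0$ yields the same slice, so $E_a$ has a $\po_0$-name forced to be $E_a$ by every condition, placing $E_a$ in $V$. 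Once $E \in V$, $M := \mathrm{Ult}(V,E)$ satisfies $j \uhr V = j_E$; part (1) follows by comparing seed representations $y = j(f)(a)$ of elements $y \in M^*$ (replacing $f \in V[G]$ by a $\po$-name in $V$ and using that $j(G) = H$), and the identity $M = V \cap M[H]$ is automatic from the definition $M = V \cap M^*$ combined with $M^* = M[H]$.

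For part (2), a $V[G]$-definition of $j$ from parameters $\vec p$ yields a $V[G]$-definition of each $E_a$ from $\vec p$ and $a$, which by the descent argument reduces to a $V$-definition of $E_a$ from $\po_0$-names of $\vec p$; since $j \uhr V = j_E$, this defines $j \uhr V$ in $V$ from parameters. The main anticipated obstacle is precisely this descent from $V[G_0]$ to $V$: rigorously justifying the independence of each $E_a$ from the particular choice of $G_0$ is the technical core of the theorem, and is exactly where the gap hypothesis $|\po_0| < \delta$ (together with the separation to $\kappa$) becomes indispensable.
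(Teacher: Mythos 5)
This statement is not proved in the paper at all: it is Hamkins' Gap Forcing Theorem, quoted verbatim from \cite{hamkins2001gap} and used as a black box. So there is no in-paper proof to compare against; I can only assess your sketch against Hamkins' actual argument.

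Your overall architecture is the right one (factor $\po=\po_0*\po_1$, note $j$ fixes $\po_0$ and $G_0$, use the closure of the tail forcing, derive an extender from $j\uhr V$, and pull it down to $V$), and your approximation lemma for $\delta$-sequences of ordinals is correct as stated. But there are two genuine gaps at the technical core. First, the descent from $V[G_0]$ to $V$ via ``symmetry of names'' does not work: $\po_0$ is an arbitrary nontrivial poset of size $<\delta$ with no homogeneity hypothesis, and each $E_a$ is defined from $j$, which is an object of $V[G]$ --- there is no canonical elementary embedding attached to an arbitrary alternative generic $G_0'$, so the claim that every condition forces the same value for a name of $E_a$ is unjustified. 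Second, and more fundamentally, the inference ``every $<\delta$-sized chunk of $E_a$ lies in $V[G_0]$ (or in $V$), therefore $E_a$ does'' is a non sequitur without the $\delta$-\emph{approximation property}: the statement that any set of ordinals in $V[G]$ all of whose $<\delta$-sized $V$-approximations lie in $V$ must itself lie in $V$. That property is exactly Hamkins' Key Lemma, and its proof is the hard part of the theorem: it is a pigeonhole argument along a play of Player II's strategy in $\po_1$, exploiting the tension between $|\po_0|<\delta$ and the $(\delta+1)$-strategic closure of the tail, not merely the fact that the tail adds no new $\delta$-sequences. Relatedly, your sketch never uses the hypothesis that $M^*$ is closed under $\delta$-sequences from $V[G]$, which is needed to see that the relevant approximations of $E_a$ are available in $V[G]$ in the first place; that this hypothesis plays no role in your argument is a sign that the argument is bypassing rather than overcoming the main difficulty.
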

		
		We remark that $M^*$ having the form $M[H]$ already follows from Laver’s ground model definability theorem. However, the inclusion $M \subseteq V$ is nontrivial and need not hold for forcings without a gap (see, for example, \cite[Subsection 5.2]{GitikKaplanRestElm}).

		\begin{remark}
		Nonstationary support iterated forcings usually satisfy the assumptions of the Gap forcing theorem. Our splitting forcing $\po^\tau$, however, is a nonstationary-support product rather than an iteration. It is unclear whether the conclusions of the Gap Forcing Theorem apply directly to $\po^\tau$. To address this, we modify $\po^\tau$ by considering $\ro^\tau = \text{Cohen}(\omega)* \po^\tau$, which has a gap at $\delta = \omega_1$. An alternative approach would be to define $\po^\tau$ as a nonstationary-support iteration instead of a product, but this comes at the cost that the ultrapowers $\text{Ult}(V[G], U^*_\eta)$ for $\eta<\tau$ are not necessarily the same (this would still provide a proof for the original Friedman-Magidor theorem \ref{Theorem: Friedman–Magidor}).
		\end{remark}

		With this in mind, we now have all the ingredients to conclude the proof of Theorem \ref{Theorem: controlling normal measures}.
		
		\begin{proof}[Proof of Theorem \ref{Theorem: controlling normal measures}]
		Let $\ro^\eta = \text{Cohen}(\omega)* \po^\tau$. Let $G = g* G'\subseteq \ro^\tau$ be generic over $V$, where $g$ is Cohen-generic over $V$, and $G'$ is $\po^\tau$-generic over $V[g]$. By the Lévy–Solovay theorem \cite{LevySolovay1967measurable}, every normal measure on $\kappa$ in $V$ generates a normal measure on $\kappa$ in $V[g]$, and conversely, every normal measure on $\kappa$ in $V[g]$ arises in this way. Given $U\in V$ a normal measure, denote by $U'\in V[g]$ the normal measure it generates. As above, given $\eta<\tau$, let $S_\eta =\{ \alpha<\kappa \colon (\cup G')(\alpha) = f_\eta(\alpha) \}$. By Lemma \ref{Lemma: lifting normal measures}, for every $\eta<\tau$, $U'\cup \{ S_\eta \}$ generates a measure ${U'}^*_{\eta}\in V[g*G']$, and all the measures $\la {U'}^*_{\eta} \colon \eta<\tau \ra$ have the same ultrapower. We can slightly abuse the notation and denote ${U}^*_{\eta}$ instead of ${U'}^*_{\eta}$. Each such $U^*_\eta$ is generated from $U\cup \{ S_\eta \}$ in $V[g*G']$, since every set in $U'$ contains a set from $U$. This concludes the proof of clause 1 of Theorem \ref{Theorem: controlling normal measures}. 
		
		We proceed and prove clause 2. Let $W\in V[G]$ be a normal measure on $\kappa$, where, as before, $G$ is viewed as a generic $g*G'\subseteq \text{Cohen}(\omega)*\po^\tau$. By Hamkins' Gap forcing theorem, $U = W\cap V\in V$, since $U = \{ X\subseteq \kappa \colon \kappa\in (j_W\uhr V)(X) \}$. $U$ is normal because $W$ is normal in $V[G]$.  Denote $\eta = j_W(\cup G')(\kappa)$.  Using again the normality of $W$,  $\eta< j_W(f_\tau)(\kappa) = \tau$ and $S_\eta\in W$. So $U\cup \{ S_\eta \}\subseteq W$, and, by the previous paragraph,  $W = U^*_\eta$. 
		
		Finally, if GCH holds in $V$, then it continues to hold in $V[g]$, and over this model $\po^\tau$ preserves cardinals by Corollary \ref{Corollary: if the iterands preserve cards and GCH then the entire iteration peserves cards}.
		\end{proof}

We finish this section by proving that the splitting forcing $\po^\tau$ preserves the Mitchell order. As a corollary, $\ro^\tau$ preserves the Mitchell order as well.

\begin{lemma}
	Assume that $U_0\vartriangleleft U_1$ are normal measures on $\kappa$ in $V$. Fix $\eta,\zeta< \tau$ and $G\subseteq \po^\tau$ generic over $V$. Then $\left(U_0\right)^*_\eta\vartriangleleft \left( U_1 \right)^*_\zeta$. 
\end{lemma}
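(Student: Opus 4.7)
My plan is to show that $(U_0)^*_\eta$ lies in the ultrapower of $V[G]$ by $(U_1)^*_\zeta$ by exhibiting it as a set definable in that ultrapower from parameters that manifestly belong there.

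From Lemma \ref{Lemma: lifting normal measures}, the ultrapower of $V[G]$ by $W := (U_1)^*_\zeta$ is $M_{U_1}[H_\zeta]$, where $H_\zeta \subseteq j_{U_1}(\po^\tau)$ is the $M_{U_1}$-generic built from $j_{U_1}[G]$ by placing $\zeta$ at coordinate $\kappa$. Since $W$ is a normal measure on $\kappa$ in $V[G]$, the ultrapower $M_{U_1}[H_\zeta]$ is closed under $\kappa$-sequences from $V[G]$; in particular, for every $X\subseteq \kappa$ with $X\in V[G]$, we have $X = j^*_W(X)\cap \kappa \in M_{U_1}[H_\zeta]$, so $P(\kappa)^{V[G]} = P(\kappa)^{M_{U_1}[H_\zeta]}$.

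Next I locate the two parameters used to recover $(U_0)^*_\eta$. First, the hypothesis $U_0 \vartriangleleft U_1$ means exactly $U_0 \in M_{U_1}$, hence $U_0 \in M_{U_1}[H_\zeta]$. Second, $S_\eta$ is a subset of $\kappa$ in $V[G]$, so by the previous paragraph $S_\eta \in M_{U_1}[H_\zeta]$. (One could alternatively note that $S_\eta$ is definable from the canonical function $f_\eta\in M_{U_1}$ together with the restriction $G = H_\zeta\uhr \kappa$, which is readable from $H_\zeta$.) Now, by Lemma \ref{Lemma: lifting normal measures} again, $(U_0)^*_\eta$ is the filter generated by $U_0\cup\{S_\eta\}$, and since $U_0$ is closed under intersections this simplifies to
$$(U_0)^*_\eta = \{ X\in P(\kappa) \colon \exists Y\in U_0,\ Y\cap S_\eta \subseteq X \}.$$
Because $P(\kappa)^{V[G]} = P(\kappa)^{M_{U_1}[H_\zeta]}$ and $U_0, S_\eta \in M_{U_1}[H_\zeta]$, this same formula evaluates to the same set in $M_{U_1}[H_\zeta]$, so $(U_0)^*_\eta\in M_{U_1}[H_\zeta]$, which by definition means $(U_0)^*_\eta \vartriangleleft (U_1)^*_\zeta$.

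The argument is essentially routine once the pieces are in place; there is no serious obstacle. The only point warranting any attention is the $\kappa$-closure of $M_{U_1}[H_\zeta]$ inside $V[G]$, which depends solely on the normality of $(U_1)^*_\zeta$ (already provided by Lemma \ref{Lemma: lifting normal measures}) and is independent of the particular structure of the lifting construction.
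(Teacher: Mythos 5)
Your proof is correct, but it takes a genuinely different route from the paper's. You place $(U_0)^*_\eta$ inside $\mathrm{Ult}(V[G],(U_1)^*_\zeta)=M_{U_1}[H_\zeta]$ by exploiting the $\kappa$-closure of that ultrapower in $V[G]$: this gives $P(\kappa)^{V[G]}=P(\kappa)^{M_{U_1}[H_\zeta]}$ for free, hence $S_\eta$ is in the ultrapower, and together with $U_0\in M_{U_1}$ (the Mitchell-order hypothesis) the generated-filter description of $(U_0)^*_\eta$ is absolute. The paper instead observes that $G$ is already $\po^\tau$-generic over $M_{U_1}$, so $U_0$ lifts to a measure $\left((U_0)^*_\eta\right)^{M_{U_1}[G]}$ computed inside the smaller model $M_{U_1}[G]$, and then runs a fusion argument (continuously deciding $\name{X}\cap\alpha$ by $\po_{\alpha+1}$-names along a club) to show that $V[G]$ and $M_{U_1}[G]$ have the same subsets of $\kappa$, so the two candidate measures coincide. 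Your argument is shorter and more elementary — no fusion is needed, only the standard closure of ultrapowers by $\kappa$-complete measures on $\kappa$ — while the paper's argument establishes the strictly stronger fact that $(U_0)^*_\eta$ already lives in $M_{U_1}[G]$, i.e.\ that the small generic extension of the Mitchell-order witness computes all of $P(\kappa)^{V[G]}$. Both correctly rely on Lemma \ref{Lemma: lifting normal measures} for the identification $\mathrm{Ult}(V[G],(U_1)^*_\zeta)=M_{U_1}[H_\zeta]$ and for the fact that $U_0\cup\{S_\eta\}$ generates $(U_0)^*_\eta$, so no gap remains in your version.
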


\begin{proof}
	Since $U_0 \vartriangleleft U_1$, we can lift $U_0$ to $\left(\left(U_0\right)^*_\eta\right)^{M_{U_1}[G]}$ by forcing with $\po^\tau$ over $M_{U_1}$. It suffices to prove that $\left(\left(U_0\right)^*_\eta\right)^{M_{U_1}[G]} = \left(U_0\right)^*_\zeta$.
	
	Both ultrafilters $\left(\left(U_0\right)^*_\eta\right)^{M_{U_1}[G]}$, $ \left(U_0\right)^*_\eta$ are generated by $U_0\cup \{ S_\eta \}$, but the former ultrafilter belongs to $V[G]$ and the latter belongs to $M_{U_1}[G]\subseteq V[G]$. So we just need to verify that $V[G]$ and $M_{U_1}[G]$ share the same subsets of $\kappa$. This is another classical application of the Fusion Lemma \ref{Lemma: Fusion for a product}.
	
	Let $\name{X}\in V$ be a $\po^\tau$-name for a subset of $\kappa$. For every $\alpha<\kappa$, let 
	$$e(\alpha) = \{ r\in \po^\tau \colon r\uhr \alpha+1 \Vdash \exists X_\alpha\subseteq \alpha \  \left( r\setminus (\alpha+1)\Vdash \name{X}\cap \alpha = X_\alpha \right)  \}.$$
	Each $e(\alpha)$ is a dense open subset of $\po^\tau$, since $\po^\tau\setminus \alpha+1$ is more than $\alpha$-closed.  By the Fusion Lemma \ref{Lemma: Fusion for a product}, there exists $p\in G$ and a club $C\subseteq \kappa$ such that for every $\alpha\in C$,
	$$ p\uhr \alpha+1 \Vdash \exists X_\alpha\subseteq \alpha \  \left( p\setminus (\alpha+1) \Vdash \name{X}\cap \alpha = X_\alpha \right). $$
	For each $\alpha\in C$, let $\name{X}_\alpha$ be a canonical $\po_{\alpha+1}$-name for $X_\alpha$ as a subset of $\alpha$. Note that $\la X_\alpha \colon \alpha\in C \ra\in M_{U_1}$ and also $p\in M_{U_1}$. Since $p\in G$, we can compute $(\name{X})_G$ in $M_{U_1}[G]$ from $G$ and $\la \name{X}_\alpha \colon \alpha\in C\ra$.
\end{proof}

\section{Lifting iterates of a normal measure}\label{Section: Lifting iterates of U}

Our next goal is to prove Theorem \ref{Theorem: Weak UA}, which provides a characterization of all possible lifts of $U^n$ for a normal measure $U$ on $\kappa$ and $n < \omega$.

We maintain the same notation as in the previous section, including the measurable cardinal $\kappa$ and the forcing notion $\po^{\tau}$, which we denote simply by $\po$ throughout this section.\footnote{In this section, there is no need to force with $\ro^\tau = \text{Cohen}(\omega) * \po^\tau$, as the main result is obtained by forcing over $L[U]$, and standard arguments eliminate the need for Hamkins’ Gap Forcing Theorem.}

In this section, we restrict attention to the case $\tau < \kappa$ only, due to technical difficulties that arise when applying the same proof technique in the cases $\tau \in \{\kappa, \kappa^+\}$.

We denote by $\kappa_i$ for $i < \omega$ the images of $\kappa$ under the iterated ultrapowers with $U$; that is, for each $i < \omega$, $\kappa_i = j_{U^i}(\kappa)$.

\begin{lemma}\label{Lemma: Multivariable fusion}
	Fix a normal measure $U$ on $\kappa$, $n<\omega$, and a sequence $\la \eta_0, \ldots, \eta_{n-1} \ra\in \tau^n$. Assume that for every $\vec{\nu} = \la \nu_0, \ldots, \nu_{n-1} \ra \in [\kappa]^{n}$, $e(\vec{\nu}) = e(\nu_0,\ldots, \nu_{n-1})\subseteq \po$ is a dense open set. Then for every $p\in \po$ there exists $r\geq p$ such that
	$$\{  \vec{\nu}\in [\kappa]^n \colon r\cup \{ \la \nu_0, \eta_0 \ra, \la \nu_1, \eta_1 \ra, \ldots,  \la \nu_{n-1}, \eta_{n-1} \ra \} \in e(\vec{\nu}) \}\in U^n .\footnote{Note that there exists a club $C$ disjoint from $\supp(r)$, and $[C]^n\in U^n$. So we can assume that $r\cup \{ \la \nu_0, \eta_0 \ra, \ldots,  \la \nu_{n-1}, \eta_{n-1} \ra \}$ is a function for a set of $\vec{\nu}$-s in $U^n$.} $$
\end{lemma}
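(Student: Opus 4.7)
The plan is induction on $n$. For $n=0$ the claim is immediate: pick any $r \geq p$ in the dense open set $e(\emptyset)$. For the inductive step I would reduce an $n$-variable instance to an $(n-1)$-variable one by absorbing the final coordinate through $U$. Given dense open sets $\{e(\vec\nu)\}_{\vec\nu \in [\kappa]^n}$, define, for each $\vec\mu \in [\kappa]^{n-1}$,
\[
\tilde e(\vec\mu) := \{ s \in \po : \{\nu \in (\max(\vec\mu), \kappa) : s \cup \{\la \nu, f^{n-1}_{\eta_{n-1}}(\vec\mu,\nu) \ra\} \in e(\vec\mu,\nu)\} \in U \}.
\]
Openness of $\tilde e(\vec\mu)$ is routine: if $s \in \tilde e(\vec\mu)$ is witnessed by $A \in U$ and $t \geq s$, then $\kappa \setminus \supp(t)$ contains a club (hence lies in $U$), and on $A \cap (\kappa \setminus \supp(t))$ the condition $t \cup \{\la \nu, \ldots \ra\}$ extends $s \cup \{\la \nu, \ldots \ra\} \in e(\vec\mu,\nu)$ and so lies in $e(\vec\mu,\nu)$ by openness. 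Density of $\tilde e(\vec\mu)$ is exactly a single-variable ($n=1$) instance of the lemma, applied with the parameterized function $\nu \mapsto f^{n-1}_{\eta_{n-1}}(\vec\mu,\nu)$ in place of the usual canonical function.

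Granting density of $\tilde e$, I would apply the inductive hypothesis to the family $\{\tilde e(\vec\mu)\}_{\vec\mu}$ with parameters $(\eta_0, \ldots, \eta_{n-2})$, obtaining $r \geq p$ and a $U^{n-1}$-large set of $\vec\mu$ for which $r \cup \{\la \mu_i, f^i_{\eta_i}(\vec\mu) \ra : i < n-1\} \in \tilde e(\vec\mu)$. Expanding the definition of $\tilde e$ and using the iterated-integral description of $U^n$ (Fubini for normal measures) produces a $U^n$-large set of $\vec\nu = \la \vec\mu, \nu_{n-1} \ra$ on which $r \cup \{\la \nu_i, f^i_{\eta_i}(\vec\nu) \ra : i < n\} \in e(\vec\nu)$, as required.

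The crux is thus the $n=1$ base case: given $p \in \po$, dense open $\{e(\alpha)\}_{\alpha<\kappa}$, and a function $g$ representing an ordinal $\eta < j_U(\tau)$ modulo $U$, produce $r \geq p$ with $\{\alpha : r \cup \{\la \alpha, g(\alpha) \ra\} \in e(\alpha)\} \in U$. I would apply the Fusion Lemma \ref{Lemma: Fusion for an iteration} to the family $\{e(\alpha)\}$, obtaining $p^* \geq p$ and a club $C \subseteq \kappa$ (WLOG disjoint from $\supp(p^*)$) at which the usual density-of-extensions property holds. The main obstacle is to upgrade ``some extension of $p^* \cup \{\la \alpha, g(\alpha) \ra\}$ lies in $e(\alpha)$'' to the pointwise statement ``$r \cup \{\la \alpha, g(\alpha) \ra\} \in e(\alpha)$'' (for $r$ a suitable further strengthening of $p^*$) on a $U$-large subset of $C$. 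I would accomplish this via the ultrapower: in $M_U$, the set $E := [\alpha \mapsto e(\alpha)]_U$ is dense open in $j_U(\po)$, and by \L o\'s the target is equivalent to $r \cup \{\la \kappa, \eta \ra\} \in E$ in $M_U$. The tail factor of $j_U(\po)$ above $\kappa$ is highly closed in $M_U$ (there are no $M_U$-inaccessibles in the gap $(\kappa, (\kappa^+)^{M_U})$), and this closure is exactly what allows one to absorb the tail requirements of an $E$-extension of $p^* \cup \{\la \kappa, \eta \ra\}$ into a single ground-model strengthening $r \geq p^*$.
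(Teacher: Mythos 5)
Your proposal is correct and follows essentially the same route as the paper's proof: the same induction that absorbs the last coordinate into a derived dense open set (your $\tilde e(\vec{\mu})$ is exactly the paper's $e(\nu_0,\ldots,\nu_{n-1})$, with openness and density handled the same way), and a base case driven by the Fusion Lemma plus reflection into the ultrapower. The only difference is presentational: in the base case the paper amalgamates the local witnesses as $r=[\nu\mapsto r(\nu)]_U$ and checks $r\uhr\nu=r(\nu)$ on a $U$-large set, whereas you pick $r$ inside $M_U$ directly; note that what actually finishes your version is the fusion conclusion (the dense set $E$ can be met while keeping the tail equal to $j_U(p^*)\setminus(\kappa+1)$) together with openness of $E$ applied to $j_U(r)\cup\{\la\kappa,\eta\ra\}$, rather than the closure of the tail factor per se.
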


\begin{proof}
	We begin with the case $n=1$. Let $p$ be a condition. By the Fusion Lemma \ref{Lemma: Fusion for a product}, there exists a condition $q \geq p$ and a club $C\subseteq \kappa$ such that for every $\nu\in C$, 
	$$ \{ r\in \po_{\nu+1} \colon r \cup  (q\setminus \nu+1)\in e(\nu) \} $$ 
	is a dense subset of $\po_{\nu+1}$ above $q\uhr \nu+1$. We can assume that $C\cap \supp(q)=\emptyset$ by shrinking $C$. Then for every $\nu\in C$, there exists a condition $r(\nu)\in \po_{\nu}$ extending $q\uhr \nu$ such that
	$$ r(\nu) \cup \{\la \nu, \eta_0 \ra\} \cup  (q\setminus \nu+1)\in e(\nu). $$
	Since $j_U(\po)\uhr \kappa = \po$ and $j_U(q)\uhr \kappa = q$, we deduce that $r = [\nu\mapsto r(\nu)]_U$ is a condition in $\po$ extending $q$. Furthermore, there exists a set $A\in U$ such that for every $\nu\in A$, $r\uhr \nu = r(\nu)$. Then for every $\nu\in A\cap C\in U$, 
	$$r\uhr \nu \cup \{\la \nu, \eta_0 \ra\} \cup  (r\setminus \nu+1)\in e(\nu).$$
	
	Now proceed by induction. Fix a sequence $\la \eta_0, \ldots, \eta_{n-1}, \eta_{n} \ra\in \tau^{n+1}$. Assume that for every $\la \nu_0, \ldots, \nu_{n-1} \ra\in [\kappa]^n$, and for every $\nu\in \kappa\setminus \nu_{n-1}$,
	$$ e(\nu_0, \ldots, \nu_{n-1}, \nu) \subseteq \po$$
	is a dense open set.
	
	Fix $\vec{\nu} = \la \nu_0, \ldots, \nu_{n-1}\ra\in [\kappa]^n$. Define set $e(\nu_0, \ldots, \nu_{n-1})$ consisting of conditions $r$ for which there exists a set $A\in U$, such that for every $\nu\in A\setminus \nu_{n-1}+1$, 
	$$ r\cup \{ \la \nu, \eta_n \ra \}\in e(\nu_0, \ldots, \nu_{n-1}, \nu)  . $$
	Applying the case $n=1$, with $\eta_n$ in the role of $\eta_0$, shows that each set $e(\nu_0, \ldots, \nu_{n-1})$ is a dense open subset of $\po$.

	Fix $p\in \po$. Apply the induction hypothesis on the sequence 
	$$\la e(\nu_0,\ldots, \nu_{n-1}) \colon \la \nu_0,\ldots, \nu_{n-1} \ra\in [\kappa]^n \ra.$$
	There are $r\geq p$ and $X\in U^n$ such that for every increasing sequence $\vec{\nu} = \la \nu_0, \ldots, \nu_{n-1} \ra\in X $,
	$$ r\cup \{  \la  \nu_0,\eta_0 \ra ,\ldots, \la  \nu_{n-1},\eta_{n-1} \ra   \ra  \}\in e(\vec{\nu}). $$
	In particular, for every such sequence $\la \nu_0,\ldots, \nu_{n-1} \ra$  there exists a set $A( \nu_0, \ldots, \nu_{n-1} )\in U$ such that for every $\nu_n\in A( \nu_0, \ldots, \nu_{n-1} )\setminus \nu_{n-1}+1$,
	$$ r\cup \{  \la  \nu_0,\eta_0)  \ra ,\ldots, \la  \nu_{n-1},\eta_{n-1})  \ra , \la  \nu,\eta_{n})  \ra  \}\in e(\nu_0, \ldots, \nu_{n-1},\nu_n). $$
	Let
	$$ Y = \{  \la \nu_0, \ldots, \nu_{n-1}, \nu \ra\in [\kappa]^{n+1} \colon \la \nu_0 ,\ldots, \nu_{n-1} \ra\in X \text{ and } \nu\in A(\nu_0,\ldots, \nu_{n-1}) \} $$
	It's not hard to see that $Y\in U^{n+1}$. Shrink $Y$ by intersecting it with $[C]^{n+1}$, where $C$ is some club disjoint from $\supp(r)$.  It follows that the condition $r$ has the desired property, witnessed by the shrinked version of $Y$ which belongs to $U^{n+1}$.
\end{proof}

\begin{corollary}\label{Corollary: generating lifts of U^n}
	Let $U$ be a normal measure on $\kappa$. Fix $n<\omega$ and $\tau< \kappa$. Let $G\subseteq \po^\tau$ be generic over $V$. Then for every sequence $\vec{\eta} = \la \eta_0, \ldots, \eta_{n-1} \ra \in  \tau^n$, the set $U^n \cup \{ S_{\vec{\eta}} \}$ generates a $\kappa$-complete ultrafilter in $V[G]$, where 
	$$ S_{\vec{\eta}} =  \{ \la \nu_0, \ldots, \nu_{n-1} \ra\in [\kappa]^n \colon \forall i< n, (\cup G)(\nu_i) =  \eta_i  \}.$$ 
\end{corollary}

\begin{proof}
		We construct a $j_{U^n}(\po)$-generic set over $M_{U^n}$. Let
		$$  H_{\vec{\eta}} = \{  q\in j_{U^n}(\po) \colon \exists p\in G \left(q\leq  j_{U^n}(p)\cup \{ \la \kappa, \eta_0 \ra, \ldots, \la \kappa_{n-1}, \eta_{n-1} \ra   \} \right) \}.$$
		We argue that $H_{\vec{\eta}}\subseteq j_{U^n}(\po)$ is generic over $M_{U^n}$.
		 
		First, note that for every $m\leq n$ and $p\in \po$, $\kappa_m \notin j_{U^n}(p)$ (since $\kappa_m$ belongs to $j_{U^n}(C)$ where $C\subseteq \kappa$ is any club disjoint from the support of $p$). Thus, $H_{\vec{\eta}}$ consists of legitimate conditions in $j_{U^n}(\po)$. Clearly any pair of conditions in $H_{\vec{\eta}}$ are compatible. Thus, we only need to argue that $H_{\vec{\eta}}$ is $j_{U^n}(\po)$-generic over $V$. This is an immediate corollary of Lemma \ref{Lemma: Multivariable fusion}: given any dense open set $E\subseteq j_{U^n}(\po)$ in $M_{U^n}$, let $\la e(\nu_0,\ldots, \nu_{n-1}) \colon \la \nu_0, \ldots, \nu_{n-1} \ra\in [\kappa]^n \ra$ be a sequence of dense open subsets of $\po$ such that
		$$ E = j_{U^n}\left(  \la \nu_0,\ldots, \nu_{n-1}\ra\mapsto e(\nu_0,\ldots, \nu_{n-1}) \right)\left( \kappa_0,\ldots, \kappa_{n-1} \right). $$
		Pick a condition $r\in G$ such that
		$$\{  \vec{\nu}\in [\kappa]^n \colon r\cup \{ \la \nu_0, \eta_0 \ra, \ldots,  \la \nu_{n-1},\eta_{n-1} \ra \} \in e(\nu_0, \ldots, \nu_{n-1}) \}\in U^n. $$
		Then $j_{U^n}(r)\cup \{ \la \kappa, \eta_0 \ra, \ldots, \la \kappa_{n-1}, \eta_{n-1} \ra  \} \in E\cap H_{\vec{\eta}}$, as desired. 
	\end{proof}
	
	As a corollary, we obtain a complete characterization of all possible lifts of $U^n$ in $V[G]$.
	\begin{corollary}\label{Corollary: characterization of W that extends U^n}
	Let $U$ be a normal measure on $\kappa$. Fix $n<\omega$ and $\tau< \kappa$. Let $G\subseteq \po = \po^\tau$ be generic over $V$. For every $\sigma$-complete ultrafilter $W\in V[G]$ such that $U^n \subseteq W$, there exists $\vec{\eta} = \la \eta_0, \ldots, \eta_{n-1} \ra\in \tau^n$ such that $W$ is the measure generated by $U^n \cup \{ S_{\vec{\eta}} \}$.
	\end{corollary}

	\begin{proof}
		Let $j_W \colon V[G]\to M[H] = M[j_W(G)]$ be the ultrapower embedding associated with $W$.
		
		Since $U^n \subseteq W$, $W$ concentrates on the set $[I]^n$ consisting of increasing sequences of inaccessibles (in $V$) of length $n$. Thus, $[Id]_W = \{ \lambda_0, \ldots, \lambda_{n-1} \}$ for some inaccessibles (of $M$) $\lambda_0<\ldots < \lambda_{n-1}$.
		
		It follows that the generic $H = j_W(G)$ assigns to each $\lambda_i$ (for $0\leq i \leq n-1$) some $\eta_i = (\cup H)(\lambda_i) < j_W(\tau) =\tau$. Thus, letting $\vec{\eta} = \la \eta_0, \ldots, \eta_{n-1} \ra\in \tau^{n}$, we deduce that $S_{\vec{\eta}}\in W$, and $W$ is the measure generated by $U^n \cup \{ S_{\vec{\eta}} \}$.
	\end{proof}

	All the results provided so far have not required the assumption that $V = L[U]$. However, to obtain a full characterization of all $\sigma$-complete ultrafilters, the assumption $V = L[U]$ is useful, since it implies that every $\sigma$-complete ultrafilter is Rudin–Keisler equivalent to one extending $U^n$ for some $n < \omega$. The proof of Theorem \ref{Theorem: Weak UA}, given below, explains this in detail.

	\begin{proof}[Proof of Theorem \ref{Theorem: Weak UA}]
		Assume $V = L[U]$ is the core model. Let $G\subseteq \po^\tau$ be generic over $L[U]$.
		
		We first show that every $\sigma$-complete ultrafilter $W \in L[U][G]$ on an ordinal $\chi$ satisfies $W \cap L[U] \in L[U]$. Indeed, the restriction $j_W \uhr L[U]$ is an iterated ultrapower $j_{U^m}$ of $U$ over $L[U]$ for some $m < \omega$. Consequently,
		$$ W\cap L[U] = \{ X\subseteq \chi \colon [Id]_W\in j_{U^m}(X) \}\in L[U]. $$
		
		For clause 1 of Theorem \ref{Theorem: Weak UA}, the argument proceeds exactly as in the proof of Theorem \ref{Theorem: controlling normal measures}: given a normal measure $W\in L[U][G]$ on $\kappa$, first observe that $W \cap L[U]$ inherits normality from $W$, so $U = W \cap L[U]$; then set $\eta = j_W(\cup G)(\kappa)$, note that $S_\eta \in W$, and conclude that $W$ is the normal measure $U^*_{\eta}$ generated by $U \cup \{S_\eta\}$.

		Let us proceed and prove clauses 2 and 3. Let $W\in V[G]$ be an arbitrary $\sigma$-complete ultrafilter in $V[G]$, concentrating on an ordinal $\chi$. By the previous paragraphs, $W\cap L[U]$ is a $\sigma$-complete ultrafilter in $L[U]$, and as such, it is Rudin-Keisler equivalent to $U^n$ for some $n<\omega$. Fix an injection $h \colon \chi\to [\kappa]^n$ witnessing this (namely, for every set $X\subseteq \chi$, $X\in W\cap L[U]$ if and only if $h[X]\in U^n$). By replacing $W$ with $h_{*}(W) = \{ X\subseteq [\kappa]^n \colon h^{-1}[X]\in W \}$, we can assume from now on $W$ concentrates on $[\kappa]^n$ and  $W\supseteq U^n$. This is harmless since our goal is characterizing $j_W$ and $\text{Ult}(L[U][G], W)$, which are invariant under Rudin-Keisler equivalence. By Corollary \ref{Corollary: characterization of W that extends U^n}, there exists $\vec{\eta} = \la \eta_0 ,\ldots, \eta_{n-1} \ra\in \tau^n$ such that $W$ is the measure generated by $U^n \cup \{S_{\vec{\eta}}\}$. 
		
		We proceed and prove that $\text{Ult}(L[U][G], W)$ is a finite iterated ultrapower of $L[U][G]$ via normal measures.

		Let $W^0 \in V[G]$ be the normal measure on $\kappa$ generated from $U \cup \{ S_{\eta_0} \}$ (in the notations of Lemma \ref{Lemma: lifting normal measures}). Denote $N^0 = \text{Ult}(V[G], W^0)$, and note that $N^0 = M_U[H_{\eta_0}]$, where $H_{\eta_0} = j_{W^0}(G)$ is generated from $(\cup j_U[G]) \cup \{ \langle \kappa, \eta_0 \rangle \}$.
		
		Over $N^0 = M_U[H_{\eta_0}]$, let $W^1$ be the normal measure on $\kappa_1$ generated from $j_U(U) \cup \{S^1_{\eta_1}\}$, where $S^1_{\eta_1} = \{ \alpha < \kappa_1 : (\cup H_{\eta_0})(\alpha) = \eta_1 \}$. Set $N^1 = \text{Ult}(N^0, W^1)$. We argue that $N^1$ has the form $M_{U^2}[H_{\la \eta_0, \eta_1 \ra}]$ where $H_{\la \eta_0, \eta_1 \ra} = j_{W_1}(H_{\eta_1})$ is the $j_{U^2}(\po)$-generic generated by $\{ (\cup j_{U^2}[G])\cup \{\la \kappa, \eta_0 \ra, \la \kappa, \eta_1 \ra  \}  \}$.
		
		 Indeed, inside $V[G]$, the ground of $\text{Ult}(V[G], W)$ for every normal measure $W \in V[G]$ is definable as $\text{Ult}(V, W \cap V)$ (recall that $V$ itself is definable in $V[G]$ by Laver's ground model definability theorem). Furthermore, $j_W(G)$ is the generic set generated by $(\cup j_{W\cap V}[G])\cup \{ \la \kappa, \eta \ra  \}$ where $\eta$ is the unique ordinal below $\tau$ such that $S_{\eta}\in W$. Having all of this in mind, we can now use elementarity to deduce how $N^1 = \text{Ult}(N^0, W^1)$ looks inside $N^0 = M_U[H_{\eta_0}]$: the ground of $\text{Ult}(N^0, W^1)$ is 
		\[
		\text{Ult}(M_U, W^1 \cap M_U) = \text{Ult}(M_U, j_U(U)) = M_{U^2}
		\]
		and the corresponding generic set $j_{W^1}(H_{\eta_0})$ is the one generated by 
		$$j_{W^1\cap M_U}[H_{\eta_0}]\cup (\{\la \kappa_1, \eta_1 \ra\} ).$$
		Since $W^1\cap M_U = j_{U}(U)$ and $H_{\eta_0}$ itself is generated by $j_U[G]\cup \{ \la \kappa, \eta_0 \ra \}$, we deduce that $j_{W^1}(H_{\eta_0})$ is the generic generated by
		$$  j_{U^2}[G]\cup (\{ \la \kappa, \eta_0 \ra , \la \kappa_1, \eta_1 \ra\}  $$
		which is exactly the generic $H_{\la \eta_0, \eta_1 \ra} $ defined above.
		
		Continuing this inductively, we define models $N^i$, normal measures $W^i\in N^{i}$, ordinals $\eta_i< \tau$ and generics $H_{\la \eta_0, \ldots, \eta_i \ra}\subseteq j_{U^i}(\po)$, such that for every $i\leq n-1$,
		\begin{enumerate}
			\item $N^{i} = \text{Ult}(N^{i}, W^i) \simeq M_{U^{i+1}}[H_{ \la \eta_0, \ldots, \eta_i \ra }]$.
			\item $W^{i+1}$ is generated inside $N^{i}$ from $j_{U^{i+1}}(U) \cup \{  S^{i+1}_{\eta_{i+1}} \}$ where $S^{i+1}_{\eta_{i+1}} = \{ \alpha<\kappa_{i+1} \colon (\cup H_{\la \eta_0, \ldots, \eta_i \ra})(\alpha) = \eta_{i+1} \}$.
			\item $H_{\la \eta_0, \ldots, \eta_{i+1} \ra}\subseteq j_{U^{i+1}}(\po)$ is generated from  $(\cup H_{\la \eta_0, \ldots, \eta_i \ra})\cup \{ \la \kappa_{i+1}, \eta_{i+1} \ra \}$.  Equivalently, it's generated from $j_{U^{i+1}}[G] \cup \{  \la \kappa, \eta_0 \ra , \ldots, \la \kappa_{i+1}, \eta_{i+1} \ra  \}$.
		\end{enumerate}
		
		Note that $H_{\langle \eta_0, \ldots, \eta_{n-1} \rangle}$ is the same generic from the proof of Corollary \ref{Corollary: generating lifts of U^n}, corresponding to the ultrapower embedding associated with the measure generated by 
		$U^n \cup \{ S_{\langle \eta_0, \ldots, \eta_{n-1} \rangle} \}$, which we have already identified as $W$. It follows that $N^{n-1} = M_{U^{n}}[H_{\langle \eta_0, \ldots, \eta_{n-1} \rangle}]$ 	is exactly $\text{Ult}(V[G], W)$. In other words, the ultrapower with $W$ is the result of the successive ultrapowers taken with the normal measures $W^0, W^1, \ldots, W^{n-1}$ described above. This proves Clause 2 of Theorem \ref{Theorem: Weak UA}.

		Finally, note that for every $i\leq n-1$, $N^{i} \simeq \text{Ult}(V[G], \left(U^*_0\right)^{i+1})$ (where $U^*_0$ is as in Lemma \ref{Lemma: lifting normal measures} for $\eta=0$). This follows by induction, using the fact that any pair of normal measures in $V[G]$ have the same ultrapower. Indeed, for $i = 0$, $N^0 \simeq \text{Ult}(V[G], W^0) \simeq \text{Ult}(V[G], U^*_0)$. Assuming the statement for $i$, note that $N^{i+1} \simeq \text{Ult}(N^i, W^i)$. But by the induction hypothesis, $N^i\simeq \text{Ult}(V[G], (U^*_0)^{i-1} )$, and $j_{(U^*_0)^{i-1}}( U^*_0 )$ is a normal measure on $\kappa_i$ in $N^i$. It follows that $N^{i+1} \simeq \text{Ult}(N^i, W^i)\simeq \text{Ult}(N^i, j_{(U^*_0)^{i-1}}( U^*_0 )) = \text{Ult}(V[G], (U^*_0)^{i} )$. 
		
		In particular, the above inductive argument shows that $N^{n-1}$ is isomorphic to $\text{Ult}(V[G], \left(U^*_0\right)^{n})$. This proves Clause 3 of Theorem \ref{Theorem: Weak UA}.
	\end{proof}

\section{Friedman–Magidor for extenders}\label{Section: Friedman–Magidor for extenders}

In the previous sections, our goal was to classify lifts of normal measures (and their iterations) in forcing extensions via the splitting forcing $\po^\tau$. We now turn to lifts of extenders. Throughout this section, we say that two extenders $E$ and $E'$ are equivalent if $j_E = j_{E'}$.

For the remainder of the section, fix a function $g \colon \kappa \to \kappa$ such that, for every $\alpha \leq \kappa$, $g(\alpha)$ lies below the least inaccessible cardinal above $\alpha$. Also fix the following more general version of the splitting forcing:
$$ \po = \{ f\in  \prod_{\alpha\in X} g(\alpha) \colon X = \dom(f) \text{ is nowhere stationary subset of }I \}. $$ 

As usual, we order $\po$ by inclusion, and define $\ro = \text{Cohen}(\omega)*\po$. 

Our forcing $\po$ can be viewed as a nonstationary support product $\prod_{\alpha\in I}^{NS} \qo_\alpha$, where $\qo_\alpha = \{ 0_{\qo_\alpha} \}\cup g(\alpha)$ is an atomic forcing, in which $g(\alpha)\subseteq \qo_\alpha$ is a maximal antichain. As in Sections \ref{Section: NS supports}, \ref{Section: The splitting forcing}, GCH ensures that $\po$ and $\ro$ preserve cardinals.

The function $g$ determines the number of (nonequivalent) lifts of a given ground model extender to the generic extension. The requirement that each $g(\alpha)$ be below the least inaccessible above $\alpha$ ensures that $\po$ satisfies the assumptions of the Fusion Lemma \ref{Lemma: Fusion for a product}. Our method provides control over all lifts of extenders $E$ such that $j_{E}(g)(\kappa)$ bounds all the generators of $E$ (we conjecture that those methods can be generalized to a wider class of extenders, see Remark \ref{Remark: Spaced NS supports}).

Our main goal in this section will be the following theorem:

\begin{theorem}\label{Theorem: Friedman–Magidor for extenders}
	Assume GCH, and let $g\colon \kappa\to \kappa, \po, \ro$ be as above. Let $G\subseteq \ro$ be generic over $V$. Then the following properties hold:
	\begin{enumerate}
		\item for every $(\kappa, \lambda)$-extender $E$ in $V$ whose generators are all below $j_E(g)(\kappa)$ and for each $\eta<j_E(g)(\kappa)$, there exists a $(\kappa,\lambda)$-extender $E^*_\eta\in V[G]$ such that $j_{E^*_\eta}\uhr V = j_E$. Furthermore, all the extenders $ \la E^*_\eta \colon \eta< j_E(\tau)(\kappa) \ra$ have the same ultrapower, and the extenders $\la E_\eta \colon \eta< j_E(g)(\kappa) \ra$ are nonequivalent.
		\item Every $(\kappa,\lambda)$-extender $E^* \in V[G]$ whose generators all lie below $j_{E^*}(g)(\kappa)$ and with $\lambda \geq j_{E^*}(g)(\kappa)$ is, up to equivalence, of the form $E^*_\eta$, for some $(\kappa,\lambda)$-extender $E \in V$ with generators below $j_E(g)(\kappa)$ and some $\eta < j_E(\tau)(\kappa)$.
	\end{enumerate}
\end{theorem}

We remark that the requirement $\lambda \geq j_{E^*}(g)(\kappa)$ in Clause 2 above is necessary. Indeed, Theorem \ref{Theorem: Classification of normal measures in the more general context} shows that even in the case $\lambda = \kappa+1$—that is, when $E^*$ is a normal measure on $\kappa$ in $V[G]$—the embedding $j_{E^*}\uhr  V$ may have a generator in the interval $(\kappa, j_{E^*}(g)(\kappa))$. Such a situation, however, cannot occur if $g(\alpha)\leq \alpha^+$ for all $\alpha$, as is also demonstrated in the proof of Theorem \ref{Theorem: controlling normal measures}.

\begin{remark}	\label{Remark: Spaced NS supports}
We conjecture that the requirement that $g(\alpha)$ lie below the least inaccessible above $\alpha$ can be relaxed by adopting a spaced nonstationary support.

Recall that the restriction on $g(\alpha)$ was imposed to ensure that the Fusion Lemma \ref{Lemma: Fusion for a product} applies to $\po$. Dropping this restriction, while still retaining a usable version of the Fusion Lemma, seems to require “spacing out” the nonstationary support—that is, requiring the support to be nonstationary only at inaccessibles from a specified stationary set.

For example, if $\kappa$ is sufficiently large, one can force with supports that are nonstationary at inaccessible cardinals which are limits of inaccessibles. In this setting, the quotients $\po \setminus (\alpha+1)$ (for $\alpha < \kappa$) exhibit stronger closure properties, making it possible to establish a version of the Fusion Lemma. With the bound on $g(\alpha)$ taken to be the least inaccessible limit of inaccessibles above $\alpha$, Theorem \ref{Theorem: Friedman–Magidor for extenders} should then apply to a broader class of extenders. We will not pursue this direction further in the present paper.
\end{remark}

\begin{lemma}\label{Lemma: Generics that lift an extender}
Assume that $E$ is a $(\kappa,\lambda)$-extender in $V$ whose generators all lie below $j_E(g)(\kappa)$. Let $G \subseteq \ro$ be generic over $V$. Then in $V[G]$ there are $\eta$ nonequivalent $(\kappa,\lambda)$-extenders $ \la E^*_\eta \colon \eta< j_E(g)(\kappa) \ra \in V[G]$, all with the same ultrapower model, such that each $j_{E^*_\eta}$ is a lift of $j_E$.
\end{lemma}

\begin{proof}
	Write $G = g' * G'$, where $g' \subseteq \mathrm{Cohen}(\omega)$ is generic over $V$, and $G' \subseteq \po$ is generic over $V[g']$. The extender $E$ naturally and uniquely lifts to an extender $E' \in V[g']$, namely the $(\kappa, \lambda)$-extender derived from $j'$, the standard lift of $j$ to an embedding from $V[g']$ into $M_E[g']$. From this point onward, we work over $V[g']$ and aim to show that $E'$ lifts in (at least) $\eta$ distinct ways.
	
	It suffices to prove that over any ground model $V$, whenever $G\subseteq \po$ is generic over $V$ and $E\in V$ is a $(\kappa, \lambda)$-extender, there are at least $j_E(g)(\kappa)$ nonequivalent $(\kappa, \lambda)$-extenders $\la E^*_\eta \colon \eta< j_E(g)(\kappa) \ra$, all of them with the same ultrapower, such that each $j_{E^*_\eta}$ lifts $j_E$.
	
	Thus, we change the settings to the ones mentioned in the previous paragraph. Fix $\eta< j_E(g)(\kappa)$. Define, in $V[G]$,
	$$ H_\eta = \{ q\in j_E(\po) \colon \exists p\in G \  \left(q\subseteq j_E(p)\cup \{ \la \kappa, \eta \ra \} \right) \}. $$
	We will argue that $H_\eta$ is $j_E(\po)$-generic over $V$. The crux of the argument lies in proving that $H_\eta$ meets every dense open set of $M_E$. 
	
	Fix a dense open subset $D\in M_E$ of $j_E(\po)$. Let $a\in [j_E(g)(\kappa)]^{<\omega}$ be a set of generators and $e\colon [\kappa]^{|a|}\to V$ be a function such that $D = j_E(e)(a)$. Define, for every $\alpha<\kappa$,
	$$ d(\alpha) = \{  q\in \po \colon q\uhr \alpha+1 \Vdash \forall b\in [g(\alpha)]^{|a|} \  \exists r\in \name{G}\uhr \alpha+1  \left( r^{\frown} (q\setminus (\alpha+1)) \right)\in e(b) \}. $$
	The fact that $g(\alpha)$ and $|\po_{\alpha+1}| = \max\{\alpha^{+},g(\alpha)\}$ are both strictly below the least inaccessible above $\alpha$, which is the amount of closure of $\po\setminus \alpha+1$, implies that $d(\alpha)\subseteq \po$ is dense open. Thus, we can apply the Fusion Lemma \ref{Lemma: Fusion for a product} and find a condition $p\in G$ and a club $C\subseteq \kappa$ such that for every $\alpha\in C$, 
	$$ \{ q\in \po_{\alpha+1} \colon q^{\frown} (p\setminus \alpha+1)\in d(\alpha) \} $$
	is dense open above $p\uhr \alpha+1$. In particular, for every such $\alpha\in C$,
	$$ p\uhr \alpha+1 \Vdash  \forall b\in [g(\alpha)]^{|a|} \  \exists r\in \name{G}\uhr \alpha+1  \left( r^{\frown} (p\setminus (\alpha+1)) \right)\in e(b). $$
	Since $C\subseteq \kappa$ is a club, $\kappa\in j_E(C)$, and thus
	$$ p^{\frown} 0_{\qo_\kappa} \Vdash \exists r\in j_U(\name{G})\uhr \kappa+1\  ( r^{\frown}(j_E(p)\setminus \kappa+1)\in D ) $$
	where we used the fact that $\bigcap_{b\in [j_E(g)(\kappa)]^{<\omega}} j_E\left( e \right)(b)  \subseteq E$. Since $G$ is $j_E(\po)\uhr \kappa = \po$-generic over $M_E$, and $\eta$ is $j_E(\la \qo_\alpha \colon \alpha<\kappa\ra)(\kappa)$-generic over $M_E[G]$, we can extend $p$ inside $G$ and achieve $j_E(p)\cup \{ \la \kappa, \eta \ra  \}\in E\cap H_\eta$.
	
	This proves that each $H_\eta$ is $j_E(\po)$-generic over $M_E$. Since $j_E[G]\subseteq H_\eta$, $j_E$ lifts to an embedding $j^*_{E,\eta} \colon V[G]\to M_E[H_\eta]$. By standard arguments, $j^*_{E,\eta}$ is the ultrapower embedding of the $(\kappa, \lambda)$-extender $E^*_\eta$ derived from it. The extenders $\la E^*_\eta \colon \eta<j_E(g)(\kappa) \ra$ are nonequivalent, since their elementary embeddings disagree on their value at $G$. Nevertheless, for every $\eta\neq \eta'$ below $j_E(g)(\kappa)$, $M_E[H_\eta] = M_E[H_{\eta'}]$, namely $E^*_\eta, E^*_{\eta'}$ have the same ultrapower model. Indeed, $H_{\eta'}$ can be easily computed inside $M_E[H_\eta]$ by changing the generic value of $H_\eta$ at coordinate $\kappa$ from $\eta$ to $\eta'$.
\end{proof}

\begin{remark}\label{Remark: killing a generator}
Note that $\eta$ is not a generator of $j^*_{E,\eta} = j_{E^*_\eta}$, even if $\eta$ was a generator of $E$, since $\eta = j_{E^*_\eta}( \cup G )(\kappa)$ in $V[G]$. In fact, this shows that $\po$ can be used to "kill" a generator of an extender $E\in V$. 

This implies that normal measures on $\kappa$ in the generic extension via $\po$ might rise from non-normal ground model measures (even though they restrict to a ground model normal measure, their ultrapower embedding lifts the ultrapower embedding of a non-normal measure). See Theorem \ref{Theorem: Classification of normal measures in the more general context} for more details on the characterization of normal measures in the new settings.
\end{remark}

\begin{theorem}\label{Theorem: Clause 2 of the theorem on extenders}
	Assume GCH. Let $G\subseteq \ro$ be generic over $V$. Let $E^*$ be a $(\kappa, \lambda)$-extender of $V[G]$ whose generators are all below $j_{E^*}(g)(\kappa)$ and assume that $\lambda \geq j_{E^*}(g)(\kappa)$. Then there exists a $(\kappa,\lambda)$-extender $E\in V$ and $\eta<j_E(g)(\kappa)$ such that $E^* $ is equivalent to $E^*_\eta$.
\end{theorem}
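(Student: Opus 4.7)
The plan is to mimic the normal-measure argument of Corollary \ref{Corollary: every W is a lift}, using Hamkins' Gap Forcing Theorem as the main tool. Since $\po = \po^\tau$ has a gap (by the remark following Lemma \ref{Lemma: Fusion for an iteration}), the Gap Forcing Theorem applies to $j_{E^*}$: it gives that $j_E := j_{E^*}\uhr V$ is a definable elementary embedding $V \to M_E$ with $M_E \subseteq V$, and that $\text{Ult}(V[G], E^*) = M_E[H^*]$, where $H^* := j_{E^*}(G)$ is $j_E(\po)$-generic over $M_E$. Let $E$ be the $(\kappa, \lambda)$-extender in $V$ derived from $j_E$.

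The first substantial step is to verify that the generators of $E$ all lie below $j_E(g)(\kappa)$. Fix $\xi < \lambda$. Since the generators of $E^*$ are below $j_{E^*}(g)(\kappa) = j_E(g)(\kappa)$, we may write $\xi = j_{E^*}(F)(\vec a)$ for some $F \in V[G]$ and $\vec a \in [j_E(g)(\kappa)]^{<\omega}$. A multi-variable form of Corollary \ref{Corollary: evaluating new functions by old ones}, obtained by iterating the Fusion Lemma \ref{Lemma: Fusion for an iteration} along the dense-open sets that decide $\dot F(\vec\nu)$ (for a $\po$-name $\dot F$ of $F$), produces $F_0 \in V$ on $[\kappa]^{|\vec a|}$ and a condition in $G$ forcing $\dot F(\vec\nu) \in F_0(\vec\nu)$, with $|F_0(\vec\nu)| \leq (\max\vec\nu)^+$ under GCH. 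Thus $\xi \in j_E(F_0)(\vec a)$; after possibly replacing $g$ by a cardinal-valued majorant (harmless under GCH since $g(\alpha)$ is below the next inaccessible above $\alpha$), $|j_E(F_0)(\vec a)| \leq j_E(g)(\kappa)$, so the index $\iota$ of $\xi$ in a canonical enumeration of $j_E(F_0)(\vec a)$ in $M_E$ lies below $j_E(g)(\kappa)$. Writing $\xi = j_E(h)(\vec a, \iota)$ for a suitable $h \in V$, with $(\vec a, \iota) \in [j_E(g)(\kappa)]^{<\omega}$, rules out $\xi \geq j_E(g)(\kappa)$ being a generator of $E$.

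Next, set $\eta := (\cup H^*)(\kappa) = j_{E^*}(\cup G)(\kappa)$; elementarity gives $\eta < j_{E^*}(\tau)(\kappa) = j_E(\tau)(\kappa)$. By Lemma \ref{Lemma: Generics that lift an extender}, one obtains the lifted embedding $j^*_{E, \eta} \colon V[G] \to M_E[H_\eta]$ and the extender $E^*_\eta$ derived from it. It suffices to prove $H_\eta = H^*$, which will imply $j^*_{E,\eta} = j_{E^*}$ as embeddings $V[G] \to M_E[H^*]$, and thus $E^*_\eta = E^*$ (as derived extenders from the same embedding). For any $p \in G$, the condition $j_E(p) \cup \{\la \kappa, \eta \ra\}$ lies in $H^*$: its restriction $j_E(p) = j_{E^*}(p)$ belongs to $j_{E^*}[G] \subseteq H^*$, and the $\kappa$-coordinate equals $(\cup H^*)(\kappa) = \eta$. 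Hence $H_\eta \subseteq H^*$, and since both are $j_E(\po)$-generic over $M_E$, they coincide.

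The main obstacle is the cardinality estimate in the second step: the multi-variable fusion must be controlled finely enough that, after applying $j_E$, the indexing ordinal $\iota$ stays below $j_E(g)(\kappa)$. The remaining parts of the argument parallel closely the proof of Corollary \ref{Corollary: every W is a lift}.
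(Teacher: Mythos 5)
Your overall strategy is the paper's: apply Hamkins' Gap Forcing theorem to $j_{E^*}$, use a fusion/covering argument to control $j_{E^*}(f)(a)$ by ground-model sets, and then identify $j_{E^*}(G)$ with the generic $H_\eta$ of Lemma \ref{Lemma: Generics that lift an extender}. But there is a genuine gap at the central step. The Gap Forcing theorem gives you a definable $j_{E^*}\uhr V\colon V\to M$ with $M = \mathrm{Ult}(V[G],E^*)\cap V$; it does \emph{not} give you that $M$ equals $M_E$, the ultrapower of $V$ by the derived $(\kappa,\lambda)$-extender $E$, nor that $j_{E^*}\uhr V = j_E$. A priori $j_{E^*}\uhr V$ could have generators at or above $\lambda$, in which case the natural factor map $k\colon M_E\to M$, $k(j_E(f)(a)) = j_{E^*}(f)(a)$, is not the identity and $M_E\subsetneq M$. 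Your covering argument is run only for $\xi<\lambda$, which shows that the generators of the $(\kappa,\lambda)$-extender $E$ are small, but says nothing about generators of the full embedding $j_{E^*}\uhr V$ above $\lambda$. Every subsequent step silently assumes $k=\mathrm{id}$: the identity $j_E(p)=j_{E^*}(p)$ (false if $k$ moves $j_E(\kappa)$, which is possible since $\mathrm{crit}(k)$ need only be $\geq\lambda$ and $\lambda$ may be as small as $\kappa^{++}$), the claim that $H^*$ is $j_E(\po)$-generic over $M_E$ (it is $j_{E^*}(\po)=k(j_E(\po))$-generic over $M$), and the inference from $H_\eta=H^*$ to $j^*_{E,\eta}=j_{E^*}$ (these embeddings restrict to $j_E$ and to $k\circ j_E$ on $V$, respectively). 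Showing $k=\mathrm{id}$ is in fact the main content of the paper's proof.

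The repair is exactly your covering argument, but run correctly: first replace $E^*$ by an equivalent extender so that $\lambda>\max\{\kappa^+,j_{E^*}(g)(\kappa)\}$, and then apply the fusion/covering claim to an \emph{arbitrary} ordinal $\zeta$ of $M$ (not just $\zeta<\lambda$), writing $\zeta=j_{E^*}(f)(a)$ with $a\in[j_{E^*}(g)(\kappa)]^{<\omega}$ and $f\in V[G]$ — this is possible for all $\zeta$ precisely because the generators of $E^*$ are below $j_{E^*}(g)(\kappa)$. One gets $\zeta\in j_{E^*}(F)(\kappa,a)=k\bigl(j_E(F)(\kappa,a)\bigr)$ with $|j_E(F)(\kappa,a)|^{M_E}\leq\max\{j_E(g)(\kappa),\kappa^+\}<\lambda\leq\mathrm{crit}(k)$, so $\zeta$ lies in the pointwise image of $k$; taking $\zeta=\mathrm{crit}(k)$ yields a contradiction, so $k$ is the identity and only then do your identifications go through. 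Two smaller points: your bound $|F_0(\vec\nu)|\leq(\max\vec\nu)^+$ is not the right estimate — the number of possible values is controlled by $|\po_{\alpha+1}|=\max\{\alpha^+,\tau(\alpha)\}$ at the club point $\alpha$, so $F$ must carry $\alpha$ (hence $\kappa$, after applying $j_E$) as an extra argument, and the resulting index $\iota$ is only bounded by $\max\{j_E(g)(\kappa),\kappa^+\}$, not by $j_E(g)(\kappa)$; this is another reason the normalization of $\lambda$ is needed before you can place the seeds $(\kappa,a,\iota)$ below $\lambda$.
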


\begin{proof}
Denote $M[H]\simeq \text{Ult}(V[G], E^*)$. By Hamkins' Gap forcing theorem, $j_{E^*}\uhr V \colon V\to M$ is definable in $V$. Let $E$ be the $(\kappa, \lambda)$-extender derived from $j_{E^*}\uhr V$ in $V$.  

We argue first that $j_E = j_{E^*}\uhr V$ and $M = M_E$. Define $k\colon M_E\to M$ such that, for every $a\in [\lambda]^{<\omega}$ and $f\colon [\kappa]^{|a|}\to V$ in $V$, $k( j_E(f)(a) ) = j_{E^*}(f)(a)$. It suffices to prove that $k$ is the identity map.

\begin{claim}
	$k$ is elementary and $j_{E^*}\uhr V = k\circ j_E$.
\end{claim}

\begin{proof}
	Let $a\in [\lambda]^{<\omega}$ and assume that $f_0,f_1 \colon [\kappa]^{|a|}\to V$ are functions in $V$. Then:
	\begin{align*}
		j_E(f_0)(a)= j_E(f_1)(a) \iff & \{ \vec{\nu}\in [\kappa]^{|a|} \colon f_0(\vec{\nu}) = f_1(\vec{\nu}) \}\in E_a\\
		\iff & a\in j_{E^*}\left(\{ \vec{\nu}\in [\kappa]^{|a|} \colon f_0(\vec{\nu}) = f_1(\vec{\nu}) \}\right)\\
		\iff & j_{E^*}(f_0)(a) = j_{E^*}(f_1)(a).
	\end{align*}
Building on this argument, full elementarity of $k$ can be shown. 
\end{proof}

Note that $\text{crit}(k)\geq \lambda$ since every generator below $\lambda$ is mapped to itself. In fact, the critical point of $k$ is the least generator (if such exists) of $j_{E^*}\uhr V$ which is greater or equal $\lambda$. We argue that no such generator exists. Fix an ordinal $\zeta\geq \lambda$, and write, in $V[G]$, $\zeta = j_{E^*}(f)(a)$ for some $a\in [j_E(g)(\kappa)]^{<\omega}$ and $f\colon [\kappa]^{|a|}\to \text{Ord}$ in $V[G]$. We would like to prove here a strengthening of Corollary \ref{Corollary: evaluating new functions by old ones}:

\begin{claim}
	There exists a partial function $F\in V$ and a club $C\subseteq \kappa$ such that, for every $\alpha\in C$ and $\vec{\nu}\in [g(\alpha)]^{<\omega}$,
	$$ f(\vec{\nu})\in F(\alpha, \vec{\nu})  \text{ and } |F(\alpha, \vec{\nu})|\leq \max \{ \alpha^+, g(\alpha) \}.$$ 
\end{claim}

\begin{proof}
	For every $\alpha<\kappa$, define
	$$ d(\alpha) = \{ q\in \po \colon q\uhr \alpha+1 \Vdash \forall \vec{\nu}\in [g(\alpha)]^{<\omega} \ \exists \gamma_{\nu}\in \text{Ord} \ (q\setminus \alpha+1 \Vdash \name{f}(\vec{\nu}) = \check{\gamma}_{\nu} ) \}. $$
	Each $d(\alpha)$ is dense-open, since $\po\setminus \alpha+1$ is more than $g(\alpha)$-closed. By the Fusion Lemma \ref{Lemma: Fusion for a product} there exists $p\in G$ and a club $C\subseteq \kappa$ such that for every $\alpha\in C$ and $\vec{\nu}\in [g(\alpha)]^{<\omega}$, there exists a $\po_{\alpha+1}$-name $\name{\gamma}^{\alpha}_{\vec{\nu}}$ such that $ p\Vdash \name{f}(\vec{\nu}) = \name{\gamma}^{\alpha}_{\vec{\nu}} $. 
	Now, for every such $\alpha\in C$ and $\vec{\nu}\in [g(\alpha)]^{<\omega}$, let 
	$$F(\alpha, \vec{\nu}) = \{ \gamma\in \text{Ord} \colon \exists r\geq p\uhr \alpha+1 \  ( r\Vdash \name{\gamma}^\alpha_{\vec{\nu}} = \check{\gamma} )\}$$
	and note that $|F(\alpha, \vec{\nu})| \leq |\po_{\alpha+1}| = \max\{g(\alpha), \alpha^+\}$ due to GCH and the fact that $|\qo_\alpha| = g(\alpha)$.
\end{proof}

Now we are prepared to complete the proof that $k$ is the identity map. Let $C,F$ be as in the claim. Since $C$ is a club in $\kappa$, $\kappa\in j_{E^*}(C)$. Since $a\in [\lambda]^{<\omega}$, we have that 
$$\zeta = j_{E^*}(f)(a)\in (j_{E^*}\uhr V)(F)(\kappa, a) . $$
Since we initially picked $\zeta\geq \lambda \geq j_{E^*}(g)(\kappa)$, and $\kappa, a$ are all below $j_E(g)(\kappa)$, the above equality shows that $\zeta$ is not a generator of $j_{E^*}\uhr V$. As a corollary, we deduce that $k$ is the identity map, and $j_{E^*}\uhr V = j_{E}$. 

To sum up the situation so far, we have proved that $M = M_E$ and $j_{E^*}\uhr V = j_E$. Write $G = g'*G'$ where $g'\subseteq \text{Cohen}(\omega)$ is generic over $V$, and $G'\subseteq \name{\po}$ is generic over $V[g]$. Let $H = j_{E^*}(G)$ and write $H = g'*H'$, where $H'$ is $j_{E^*}({\name{\po}})$-generic over $M[g']$. Note that $j_E[G] \subseteq H$ since $j_{E^*}$ lifts $j_E$.

Denote $\eta = j_{E^*}(\cup G')(\kappa) < j_{E^*}(g)(\kappa)$. Again since $j_{E^*}$ lifts $j_E$, we deduce that $\eta< j_E(g)(\kappa)$, and thus the lift  $E^*_\eta\in V[G]$ is defined.

Let $j'\colon V[g']\to M[g']$ be the natural lift of $j$. Following the proof of Lemma \ref{Lemma: Generics that lift an extender}, the set 
$$ H'_\eta = \{  q\in j'(\po) \colon \exists p\in G' \left( q\subseteq j'(p)\cup \{ \la \kappa, \eta \ra \}   \right) \} $$
is $j'(\po)$-generic over over $M[g']$. Since $\eta = j_{E^*}(\cup G')(\kappa) = (\cup H')(\kappa)$ and $j'[G']\subseteq H'$, we obtain that $H'_\eta\subseteq H'$, namely $H'_\eta = H'$. Thus $\text{Ult}(V[G], E^*) = M[H] = M_E[g'*H'_\eta]$ and $j_{E^*} = j_{E^*_\eta}$. It follows that $E^*$ and $E^*_{\eta}$ are equivalent, as desired.
\end{proof}

We can now conclude the proofs of the main results of this section.

\begin{proof}[Proof of Thoerem \ref{Theorem: Friedman–Magidor for extenders}]
	The first clause of Theorem \ref{Theorem: Friedman–Magidor for extenders} follows from Lemma \ref{Lemma: Generics that lift an extender}, and the second clause follows from Theorem \ref{Theorem: Clause 2 of the theorem on extenders}.
\end{proof}

\begin{proof}[Proof of Theorem \ref{Theorem: Consistency strengths at the level of supercompacts with Arthur and Alejandro}]
	Fix $E\in V$ a $(\kappa, \kappa^{++})$ extender witnessing that $\kappa$ is $\kappa^{++}$-strong. For every $\alpha<\kappa$, let $g(\alpha) = \alpha^{++}$. Let $G\subseteq \ro$ be generic over $V$. By Theorem \ref{Theorem: Friedman–Magidor for extenders}, $E$ lifts in exactly $j_E(g)(\kappa) = (\kappa^{++})^{M_E} = \kappa^{++}$ ways to an extender of $V[G]$, and a standard argument (see, for instance,  \cite[Proposition 8.4]{cummings2009iteratedforcingsandelemembs}) shows that each of them witnesses that $\kappa$ is $(\kappa+2)$-strong in $V[G]$. 
	On the other hand, each $(\kappa, \kappa^{++})$-extender $E^*\in V[G]$ witnessing $(\kappa+2)$-strongness is, again by Theorem \ref{Theorem: Friedman–Magidor for extenders}, such a lift of some ground model $(\kappa, \kappa^{++})$-extender $E\in V$. By Hamkins' Gap-forcing theorem \ref{Theorem: Hamkins Gap forcing thm}, $M^{V[G]}_{E^*}\cap V = M_E$, and $V_{\kappa+2}\subseteq M^{V[G]}_{E^*}\cap V$. It follows that $V_{\kappa+2}\subseteq M_E$ and $E$ witnesses $(\kappa+2)$-strongness in $V$.
\end{proof}

Goldberg observed that even though there are possibly $\kappa^{+3}$-many $(\kappa, \kappa^{++})$-extenders (in the GCH context), only $\kappa^{++}$ of them may have the same ultrapower. In this sense, Theorem \ref{Theorem: the number of extenders on a strong} is optimal.

\begin{theorem}[Goldberg]\label{Theorem: Number of extenders with the same ultrapower}
	Denote by $\beta$ the number of ultrafilters on $\kappa$. Let $E$ be a $(\kappa, \lambda)$-extender. Then, up to equivalence, there are at most $\max\{\beta, \lambda\}$ extenders with the same ultrapower as $E$.
\end{theorem}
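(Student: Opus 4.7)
The plan is to parameterize each equivalence class of $(\kappa,\lambda)$-extenders with ultrapower $M$ by a pair $(U,s)$ consisting of a normal $V$-ultrafilter $U$ on $\kappa$ and a single ``seed'' element $s\in M$. Since there are at most $\beta$ choices for $U$ and, as argued below, $|M|\le\max\{\beta,\lambda\}$, this yields the bound $\beta\cdot|M|=\max\{\beta,\lambda\}$.

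To each equivalence class I associate its unique ultrapower embedding $j\colon V\to M$, together with the derived normal measure $U_j:=\{X\subseteq\kappa : \kappa\in j(X)\}\in V$; this first invariant contributes at most $\beta$ choices. Fixing $U$, every $j$ with $U_j=U$ factors as $j=k_j\circ j_U$, where $k_j\colon M_U\to M$ is the canonical factor map satisfying $k_j(\kappa)=\kappa$ and $k_j\circ j_U=j$. Since $k_j$ determines $j$, it suffices to bound the number of possible factor maps. I would argue, via a seed-theorem-style rigidity argument, that each $k_j$ is pinned down by its value at a single canonical element of $M_U$ (for instance, $k_j(j_U(\lambda))$), reflecting the constraint that $M$ is generated over $k_j[M_U]$ by the ordinals below $\lambda$. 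This gives at most $|M|$ choices for $k_j$, hence at most $\beta\cdot|M|$ equivalence classes in total.

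The bound $|M|\le\max\{\beta,\lambda\}$ follows from the standard representation: every element of $M$ has the form $j(f)(a)$ for some $a\in[\lambda]^{<\omega}$ and $f\colon[\kappa]^{|a|}\to V$ in $V$, so modulo extender-equivalence there are at most $\lambda\cdot 2^\kappa\le\max\{\beta,\lambda\}$ representatives (using that the number of ultrafilters on $\kappa$ is $2^{2^\kappa}=\beta$, hence $2^\kappa\le\beta$).

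The main obstacle is the seed-theorem step: establishing that the factor map $k_j$ is rigidly determined by a single value in $M$. This requires carefully exploiting the $(\kappa,\lambda)$-extender constraint---that $M$ is generated over $k_j[M_U]$ by the ordinals below $\lambda$---together with the rigidity of elementary embeddings between transitive inner models (modulo their elementary self-embeddings). Once this is in place, combining the two parameterizations gives the total bound of $\max\{\beta,\lambda\}$ equivalence classes.
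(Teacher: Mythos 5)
There is a genuine gap, and it sits exactly where you flag ``the main obstacle.'' Your parameterization is circular: you factor each candidate embedding as $j = k_j\circ j_{U_j}$ through its \emph{derived normal measure} and hope that $k_j$ is pinned down by one seed value. But $M_{U_j}=\mathcal{H}^{M_{U_j}}(j_{U_j}[V]\cup\{\kappa\})$, and on the generating class $j_{U_j}[V]$ the factor map is given by $k_j(j_{U_j}(x))=j(x)$ --- that is, $k_j\uhr j_{U_j}[V]$ \emph{is} the unknown embedding $j$ you are trying to count. Knowing $k_j(\kappa)=\kappa$ and one further value such as $k_j(j_{U_j}(\lambda))$ does not determine $k_j$ on $j_{U_j}[V]$; elementary embeddings between distinct transitive inner models are not rigid in this naive sense, and the ``rigidity modulo self-embeddings'' you appeal to is precisely the hard content, not a standard fact. (Separately, the asserted bound $|M|\le\max\{\beta,\lambda\}$ cannot be right as stated: $M$ is a proper class. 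What is true is that the relevant seeds, being of the form $j_E(f)(a)$ with $f\colon[\kappa]^{|a|}\to\kappa$ and $a\in[\lambda]^{<\omega}$, number at most $\max\{2^\kappa,\lambda\}$.)

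The paper's proof supplies exactly the missing rigidity, but from a different direction. It invokes Goldberg's uniqueness theorem (\cite[Theorem 3.7]{Goldberg2024TheUniquenessofElemEmbs}): for any $F$ with the same ultrapower as the \emph{fixed} extender $E$, both embeddings factor through a \emph{common} model $N=\mathcal{H}^{M}(j_E[V]\cup j_F[V])$ as $j_E=k\circ i_0$ and $j_F=k\circ i_1$ with a \emph{single} connecting map $k$ and with $i_0,i_1$ almost ultrapower embeddings; a short claim then upgrades $i_0,i_1$ to genuine ultrapower embeddings $j_{Z_0},j_{Z_1}$ by ultrafilters on $\kappa$ (not necessarily normal --- your restriction to the derived normal measure also loses information here). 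The decisive point is that $k\circ i_0=j_E$ is \emph{known}, so $k$ is determined on $i_0[V]$ for free, and then on the single generator $[\mathrm{Id}]_{Z_0}$ by a pair $(f,a)$ with $k([\mathrm{Id}]_{Z_0})=j_E(f)(a)$; hence $j_F=k\circ j_{Z_1}$ is determined by $(Z_1,f,a)$, giving the count $\max\{\beta,\lambda\}$. To repair your argument you would have to anchor the factor map to the fixed embedding $j_E$ rather than to the unknown $j$, which in effect forces you back to Goldberg's common-factorization theorem.
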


\begin{proof}
	Following \cite{Goldberg2024TheUniquenessofElemEmbs}, we say that an elementary embedding $i\colon V\to N$ is almost an ultrapower embedding if for every $X\subseteq N$ there exists $a\in N$ such that $X\subseteq \mathcal{H}^{N}(i[V]\cup\{a\})$ (here and below, $\mathcal{H}$ denotes the Skolem hull). 
	
	\begin{claim}
		Let $j\colon V\to M$ be an extender embedding, $i\colon V \to N$ almost an ultrapower embedding, and $k\colon N\to M$ an elementary embedding such that $j = k\circ i$. Then $i$ is an ultrapower embedding; that is, there exists $a\in N$ such that $N = \mathcal{H}^{N}(i[V]\cup\{a\})$.
	\end{claim}

	\begin{proof}
		Since $j$ is an extender embedding, there exists a set $A\in V$ such that $j(A)$ covers the set of (finite sequences of) generators of $j$. In particular, $M = \mathcal{H}^M\left( j[V]\cup j(A) \right)$. It follows that $N = \mathcal{H}^N\left( j[V]\cup i(A) \right)$: for every element $x\in N$, there exists a function $f\in V$ such that 
		$$M\models \exists a\in j(A) \  (k(x) = j(f)(a)),$$
		where we view $k(x)$, $j(A) = k(i(A))$ and $j(f) = k(i(f))$ as parameters. By elementarity of $k$, 
		$$N\models \exists a\in i(A)\ (x = i(f)(a)), $$
		So $x\in \mathcal{H}^N\left( j[V]\cup i(A) \right)$.

		Since $i$ is almost an ultrapower embedding, for every ordinal $\xi$ there exists $\alpha_\xi\in N$ such that $(V_\xi)^N \subseteq \mathcal{H}^N\left( j[V]\cup \{\alpha_\xi\} \right)$. Each such $\alpha_\xi$ belongs to $\mathcal{H}^N\left( j[V]\cup \{ a_\xi \} \right)$ for some $a_\xi\in i(A)$. Thus, there exists $a\in i(A)$ such that for unboundedly many ordinals $\xi$, $\alpha_\xi \in \mathcal{H}^N\left( j[V]\cup \{a\} \right)$. Thus, $N\subseteq \mathcal{H}^N\left( j[V]\cup \{a\} \right)$.
	\end{proof}

	Having this in mind, fix an extender $E$ as in the formulation of the Lemma.  By \cite[Theorem 3.7]{Goldberg2024TheUniquenessofElemEmbs}, for every extender $F$ with the same ultrapower as $E$ there exists an inner model $N$ (actually, $N = \mathcal{H}^M\left( j_E[V]\cup j_F[V] \right)$) and elementary embeddings $i_0\colon V\to N$, $i_1\colon V\to N$ and $k\colon N\to M_E$ such that $j_E = k\circ i_0 $, $j_F= k\circ i_1$, and $i_0, i_1$ are almost ultrapower embeddings. By the above claim, since $i_0, i_1$ factor into extender embeddings, both are ultrapower embeddings with respect to ultrafilters $Z_0, Z_1$ on $\kappa$, respectively. 
	
	We argue that $k$ is uniquely determined from $E$, a function $f\colon \kappa \to \kappa$ and some $a\in [\lambda]^{<\omega}$. Indeed, let $a\in [\lambda]^{<\omega}$ and $f\in V$ be such that $k\left( [Id]_{Z_0} \right) = j_E(f)(a)$. Then $Z_0 = f_{*} (E_a) = \{ X\subseteq \kappa \colon f^{-1}[X]\in E_a \}$, and $k$ maps each element of the form $i_0(g)([Id]_{Z_0})$ to $j_E(g)\left( j_E(f)(a) \right)$. Thus $k$ is uniquely determined from $E$, $f$ and $a$.
	
	Since $j_F = k\circ j_{Z_1}$, $F$ is uniquely determined from $Z_1$, $E$,  $f$ and $a\in [\lambda]^{<\omega}$. The set of all such parameters has size $\max\{\beta,\lambda\}$.
\end{proof}

We finish this section with a complete classification of normal measures in $V[G]$ for $G\subseteq \ro$ generic over $V$. By Remark \ref{Remark: killing a generator}, normal measures $W\in V[G]$ may rise-up from non-normal measures $E$ of $V$ which have an additional single generator above $\kappa$. In this case, the forcing $\po$ might "kill" this additional generator, and $E$ lifts to a normal measure which differs from those described in Theorem \ref{Theorem: controlling normal measures}.

We summarize this in the following theorem. 

\begin{theorem}\label{Theorem: Classification of normal measures in the more general context}
	Assume GCH. Let $G\subseteq \ro$ be generic over $V$. Suppose that $W\in V[G]$ is a normal measure on $\kappa$. Then there exists a $(\kappa, j_W(g)(\kappa))$-extender $E$ in $V$ such that $W = E^*_{\eta}$ for some $\eta< j_E(g)(\kappa)$. Furthermore, $E$ is equivalent to a $\kappa$-complete ultrafilter on $\kappa$, which is the one derived from $j_E$ using $(\kappa, \eta)$ as a seed.
\end{theorem}

\begin{proof}
	As usual, write $G = g'*G'$, where $g'\subseteq \text{Cohen}(\omega)$ is generic over $V$, and $G'\subseteq \po$ is generic over $V[g']$. 
	
	Denote $\lambda = j_W(g)(\kappa)< j_W(\kappa)$. Let $E^*$ be the $(\kappa, \lambda)$-extender derived from $j_W$. Note that $j_W = j_{E^*}$, since the only generator of $W$ is  $\kappa$, and it is below $j_W(g)(\kappa)$. By Theorem \ref{Theorem: Clause 2 of the theorem on extenders}, we deduce that $j_{W} = j_{E^*_\eta}$, for some $(\kappa, \lambda)$-extender $E$ in $V$ and $\eta< j_E(g)(\kappa)$. By the analysis in the proof of Theorem \ref{Theorem: Clause 2 of the theorem on extenders}, $\eta = j_{W}\left( \cup G' \right)(\kappa)$. 
	
	It remains to prove that every $x\in M_E = \text{Ult}(V,E) $ has the form $j_E(\kappa, \eta)$, and thus $E$ is equivalent to the measure derived from $j_E$ using $(\kappa, \eta)$ as a seed.\footnote{possibly, $E$ is already equivalent to a normal measure, and then $\kappa$ is the single generator of $j_E$ and $W$ is really a lift of a ground-model normal measure. This happens, for instance, if $j_E(g)(\kappa)\leq \kappa^+$.} 
	
	Fix $\zeta\neq \kappa, \eta$. Since $W$ is normal, there exists $f\in V[G]$ such that $\zeta = j_W(f)(\kappa)$.

	Work in $V[g']$. By the standard Fusion argument from the proof of Corollary \ref{Corollary: evaluating new functions by old ones}, there exists $p\in G'$ and a club $C\subseteq \kappa$ such that for every $\alpha\in C$,
	$$ p\uhr \alpha+1 \Vdash \exists \gamma\in \text{Ord} \  \left( p\setminus (\alpha+1) \Vdash \name{f}(\alpha) = \check{\gamma}\right).$$
	Let $j'\colon V[g']\to M_E[g']$ be the natural lift of $j_E$. Since $\kappa\in j'(C)$, we have in $M_E[g']$ that
	$$ p^{\frown} 0_{\qo_\kappa} \Vdash \exists \gamma\in \text{Ord} \ \left(  j'(p)\setminus (\kappa+1) \Vdash j'(\name{f})(\kappa) = \check{\gamma} \right). $$
	We can use the fact that $\eta< j_E(\tau)(\kappa)$ is a possible generic for $\qo_\kappa$, to deduce
	$$ p\Vdash \exists \gamma\in \text{Ord} \  \left( \{  (\kappa, \eta) \}\cup  j'(p)\setminus (\kappa+1)  \Vdash j'(\name{f})(\kappa) = \check{\gamma}\right). $$
	Overall, we proved that, in $M_E[g]$, there exists a $j'(\po)\uhr \kappa = \po$-name $\name{\gamma}$ for an ordinal, such that
	$$ \left( j'(p) \cup \{ (\kappa, \eta)  \} \right)\Vdash j'(\name{f})(\kappa) = \name{\gamma}. $$

	We can assume that $\name{\gamma}$ is the least $\po$-name as above, where, by "least", we mean here that we fixed in advance a well order $\mathcal{W}$ of $V_\kappa$, which induces a well order $\mathcal{W}'$ of $(V_\kappa)^{V[g']}$, and chose $\name{\gamma}$ least with respect to $j'(\mathcal{W}')$. It follows that $\name{\gamma}$ has the form $j'(h)(\kappa, \eta)$ for some function $h\in V[g']$.
	
	Let $A\in M_E$ be the set of all possible values of $\name{\gamma}$, namely
	$$A = \{ \beta\in \text{Ord}  \colon  \exists q\in \ro \ (q\Vdash \name{\gamma} = \check{\beta}) \}.$$
	Then $A$ has the form $j(h^*)(\kappa, \eta)$ for some function $h^*\in V$ (note that $\ro = j_E(\ro)\uhr \kappa$ in the above definition of $A$), and $|A| \leq \kappa^+$ since $|\ro| = \kappa^+$. 
	
	Recall that $p\in G'$ and $\eta = j_W(\cup G')(\kappa)$. Thus, in $\text{Ult}(V[G], W) = M_E[g'*j_W(G')]$, we have $\zeta = j_W(f)(\kappa)\in A$. 
	
	In particular, for some ordinal $\xi<\kappa^+$, 
	$\zeta$ is the $\xi$-th element of $A$ with respect to the least well-order of $A$ of order type $\leq \kappa^+$ (again, the well order is chosen minimal with respect to $j_E(\mathcal{W})$). Letting $f_\xi$ be the canonical function that represents $\xi$, we obtain that, in $M_E$, $\zeta$ is the $j_E(f_\xi)(\kappa)$-th element of $j_E(h^*)(\kappa, \eta)$, with respect to the $j_E(\mathcal{W})$-least well order of $j_E(h^*)(\kappa, \eta)$ of order type $\kappa^+$. 
	
	All of this shows that $\zeta = j_E(h')(\kappa, \eta)$ for some function $h'\in V$. Since $\zeta$ was an arbitrary ordinal outside of $\{ \kappa, \eta \}$, we deduce that $E$ is equivalent to the $\kappa$-complete ultrafilter derived from $j_E$ using $(\kappa, \eta)$ as seeds.
\end{proof}

\section{Open questions and concluding remarks}\label{Section: Open Qeustions}

In this final section, we discuss some open questions.

\begin{question}\label{Question: having kappa++ many normal measures and a single normal ultrapower}
Is it consistent that GCH holds, there are $\kappa^{++}$ many normal measures on $\kappa$, but only a single normal ultrapower?
\end{question}

The same proof technique of Theorem \ref{Theorem: controlling normal measures} does not appear to apply here (see Remark \ref{Remark: limitation on tau}), but we still conjecture that the answer is positive.

Note that a positive answer to Question \ref{Question: having kappa++ many normal measures and a single normal ultrapower} would exhibit behavior somewhat different from that of extenders. For example, assuming GCH, there are at most $\kappa^{++}$ many $(\kappa, \kappa^{++})$-extenders with the same ultrapower (by Theorem \ref{Theorem: Number of extenders with the same ultrapower}), even though the total number of $(\kappa,\kappa^{++})$-extenders can be $\kappa^{+3}$.

Next, recall that the Friedman–Magidor theorem generalizes well to the context of violation of GCH on a measurable. 

\begin{theorem}[Friedman and Magidor, \cite{FriMag09}]\label{Theorem: Friedman Magidor violating the GCH}
Let $V = L[\E]$ be the minimal extender model that contains a cardinal $\kappa$ which is $(\kappa+2)$-strong. Then for every $\tau<\kappa^{++}$, there exists a cardinal preserving forcing extension in which $\kappa$ is measurable, $2^{\kappa} = \kappa^{++}$ and there are exactly $\tau$ normal measures on $\kappa$. 
\end{theorem}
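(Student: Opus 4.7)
The plan is to adapt the two-stage strategy from Theorem \ref{Theorem: controlling normal measures} and Theorem \ref{Theorem: Friedman–Magidor for extenders} to the GCH-violating setting, combining a preparatory forcing that arranges $2^{\kappa}=\kappa^{++}$ with the generalized splitting forcing from Section \ref{Section: Friedman–Magidor for extenders}. In the first stage, working in $V = L[\E]$, I would force with a standard Easton-supported product (or reverse-Easton iteration) $\mathbb{Q}$ that adds $\alpha^{++}$ Cohen subsets to each inaccessible $\alpha \leq \kappa$. A master-condition argument, modeled on \cite{cummings2009iteratedforcingsandelemembs}, lifts the $(\kappa, \kappa^{++})$-extender $E$ witnessing $(\kappa+2)$-strongness through $\mathbb{Q}$ to a $(\kappa, \kappa^{++})$-extender $E^+$ in $V^{\mathbb{Q}}$. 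In the intermediate model $V^{\mathbb{Q}}$, the cardinal $\kappa$ remains $(\kappa+2)$-strong, $2^{\kappa} = \kappa^{++}$, and sufficient GCH-like behavior survives below $\kappa$ to carry out subsequent fusion arguments.

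In the second stage, working in $V^{\mathbb{Q}}$, I would choose a function $\tau' \colon \kappa \to \kappa$ whose $E^+$-ultrapower image at $\kappa$ equals $\tau$, with $\tau'(\alpha)$ bounded below the least inaccessible above $\alpha$, along with a function $g$ such that $j_{E^+}(g)(\kappa)$ dominates all relevant generators. Since $\tau < \kappa^{++}$ and $E^+$ is $(\kappa+2)$-strong, such a function $\tau'$ exists. Forcing with the generalized splitting poset $\po^{\tau'}$ from Section \ref{Section: Friedman–Magidor for extenders}, Theorem \ref{Theorem: Friedman–Magidor for extenders} then produces exactly $\tau$ nonequivalent lifts of $E^+$, all sharing the same ultrapower. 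More importantly, Theorem \ref{Theorem: Classification of normal measures in the more general context} asserts that every normal measure $W$ on $\kappa$ in the final extension has the form $F^*_{\eta}$ for some $\kappa$-complete ultrafilter $F \in V^{\mathbb{Q}}$ whose generators lie in $\{\kappa, \eta\}$, together with an ordinal $\eta < j_F(\tau')(\kappa)$.

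To then count normal measures exactly, I would invoke Hamkins' Gap Forcing Theorem, applied to the composed forcing $\mathbb{Q} * \po^{\tau'}$ (which is arranged to have a gap at some small enough cardinal), to deduce that the restriction $F \cap V$ belongs to $V = L[\E]$. By the minimality and canonicity of $L[\E]$, the $\kappa$-complete ultrafilters in $V$ whose induced ultrapower embeddings have at most one generator beyond $\kappa$ form a tightly controlled class — essentially iterated images of the canonical $(\kappa+2)$-strong extender — and the choice of $\tau'$ is calibrated so that the total number of inequivalent $F^*_{\eta}$ arising in the final extension, modulo the equivalence of extenders with the same ultrapower, is exactly $\tau$.

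The principal obstacle is controlling this third step in the full spirit of the paper's coding-free methodology. Whereas Theorem \ref{Theorem: controlling normal measures} operates over an arbitrary ground model and only regulates lifts of prescribed normal measures, here one must pin down \emph{all} $\kappa$-complete ultrafilters of $V^{\mathbb{Q}}$ that could yield a normal measure in the final extension after $\po^{\tau'}$ kills at most one additional generator. The minimality of $L[\E]$ guarantees the requisite classification inner-model-theoretically, but a genuine bypass would appear to require a further nonstationary-support component that identifies extenders giving rise to equivalent normal measures in the final extension. This is precisely the role played by self-coding in the original Friedman–Magidor argument, and eliminating it in the GCH-violating context — so that the proof truly extends the reach of the new technique beyond the current domain of the inner model program — seems to be the heart of the difficulty.
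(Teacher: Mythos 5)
This statement is not proved in the paper at all: it is quoted from \cite{FriMag09} as background, and its original proof goes through forcing over the minimal extender model using fine structure, self-coding posets, and a Sacks-type forcing at $\kappa$ to blow up $2^{\kappa}$. Whether the coding-free machinery developed here can reproduce it is exactly what the author isolates as the open Question \ref{Question: Breaking GCH in Friedman–Magidor}, so what you have written is an attack on an open problem, not a reconstruction of an argument the paper contains. Comparing your plan against the paper's own remarks shows where it breaks. The decisive gap is in your second stage: once the preparation $\mathbb{Q}$ has arranged $2^{\kappa}=\kappa^{++}$, you are in the situation of Claim \ref{Claim: Splitting might collapse cardinals} (or dangerously close to it) --- if the prepared model admits a family of $\kappa^{+}$ many clubs in $\kappa$ that is dense under containment, which the paper notes happens for the natural GCH-violating preparations, the splitting forcing collapses $\kappa^{++}$ and thereby destroys both cardinal preservation and the arrangement $2^{\kappa}=\kappa^{++}$. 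You do not address this, and the paper explicitly states that no variant of the splitting forcing avoiding it is currently known.

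Two further steps are asserted rather than argued. First, lifting a $(\kappa,\kappa^{++})$-extender witnessing $(\kappa+2)$-strongness through an iteration that adds $\kappa^{++}$ Cohen subsets of $\kappa$ is not a routine master-condition argument in the style of \cite{cummings2009iteratedforcingsandelemembs}: the relevant generic over $M_{E}$ at stage $j_{E}(\kappa)$ cannot simply be transferred, and this is precisely why Friedman and Magidor use Sacks forcing (and why Woodin's surgery exists). Second, your final counting step --- that the inequivalent $F^{*}_{\eta}$ number exactly $\tau$ --- is the place where the original proof needs self-coding, and Theorem \ref{Theorem: Classification of normal measures in the more general context} gives no upper bound on the number of ground-model $\kappa$-complete ultrafilters $F$ with generators contained in $\{\kappa,\eta\}$ in terms of $\tau$; invoking ``the minimality and canonicity of $L[\E]$'' here reintroduces the inner-model-theoretic input the paper is trying to eliminate. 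You correctly identify this last difficulty yourself, but that acknowledgment does not close the argument: as it stands the proposal establishes neither the exact count nor even that $\kappa^{++}$ survives the second forcing.
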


This motivates the question whether the methods developed in this paper can yield similar results.

\begin{question}\label{Question: Breaking GCH in Friedman–Magidor}
	 Assume that $\kappa$ is $(\kappa+2)$-strong in an arbitrary ZFC+GCH model $V$. Fix $\tau<\kappa^{++}$. Is there a  cardinal preserving forcing extension $V[G]$ such that:
	 \begin{enumerate}
	 	\item $\kappa$ is measurable and $2^\kappa = \kappa^{++}$ in $V[G]$.
	 	\item For every extender $E\in V$ witnessing that $\kappa$ is $(\kappa+2)$-strong, there are $\tau$ normal measures on $\kappa$ in $V[G]$, such that for each of them, say $U$, $j^{V[G]}_U$ lifts $j_E$.
	 	\item Every normal measure on $\kappa$ arises as a lift of some strong extender $E$ as in the previous clause.
	 \end{enumerate}
\end{question}

In \cite{BenNeriaKaplan2024KunenLike}, it was shown that the Ultrapower Axiom is consistent with the failure of GCH at a measurable cardinal, via the construction of an $L[U]$-like model: a model in which $\kappa$ is the unique measurable cardinal, carries a single normal measure $U$, and every other $\sigma$-complete ultrafilter is equivalent to $U^n$ for some $n < \omega$. This result motivates the following question:

\begin{question}\label{Question: Weak UA + Breaking GCH}
	Is the Weak Ultrapower Axiom and the negation of the Ultrapower Axiom consistent with the failure of GCH on a measurable?
\end{question}

A natural approach to Question \ref{Question: Weak UA + Breaking GCH} is to force with the splitting forcing over the $L[U]$-like model described above and analyze all possible lifts of $U^n$, as in Section \ref{Section: Lifting iterates of U}. Indeed, the proof of Theorem \ref{Theorem: Weak UA} confirms that this construction yields a model of the Weak UA. However, applying the splitting forcing to a ground model in which GCH already fails at $\kappa$ may collapse $\kappa^{++}$ and inadvertently restore GCH. In fact, this is precisely what happens when forcing with the splitting forcing over the $L[U]$-like model, as the following claim shows:

\begin{claim}\label{Claim: Splitting might collapse cardinals}
	Let $\po$ be the splitting forcing, and assume that $2^{\kappa} = \kappa^{++}$. Suppose there exists a collection $\mathcal{C}$ of clubs in $\kappa$, such that $|\mathcal{C}|=\kappa^{+}$ and every club $D\subseteq \kappa$ contains a club from $\mathcal{C}$.\footnote{For instance, this holds if $V$ is the generic extension of a GCH model $V_0$, such that every $f\colon \kappa\to \kappa$ in $V$ is dominated by some $g\colon \kappa\to \kappa$ in $V_0$.} Then the splitting forcing $\po$ collapses $\kappa^{++}$.\footnote{A similar argument shows that any other reasonable nonstationary support product or iteration collapses $\kappa^{++}$ under the same assumptions.} 
\end{claim}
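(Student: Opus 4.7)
The plan is to construct, in $V[G]$, a surjection $F\colon \kappa^{+}\to (\kappa^{++})^V$; together with the preservation of $\kappa^{+}$ (Corollary \ref{Corollary: preservation of kappa plus}), this witnesses that $(\kappa^{++})^V$ is collapsed. First I would enumerate $\mathcal{C}=\la C_\gamma\colon \gamma<\kappa^+\ra$ in $V$, and for each $\gamma<\kappa^+$ choose a nowhere-stationary set $D_\gamma\subseteq C_\gamma\cap I$ of cardinality $\kappa$. One way to do this is by a discrete-enumeration trick: recursively let $e^\gamma_\xi\in C_\gamma\cap I$ be the least inaccessible in $C_\gamma$ strictly above the next inaccessible past $\sup\{e^\gamma_\eta\colon \eta<\xi\}$, and set $D_\gamma=\{e^\gamma_\xi\colon \xi<\kappa\}$. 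The ``extra skip'' in the recursion forces each accumulation point of $D_\gamma$ to lie strictly below the next element of $D_\gamma$; consequently, for every $\alpha\in I$, the closure of $D_\gamma\cap\alpha$ inside $\alpha$ minus $D_\gamma\cap\alpha$ is a club of $\alpha$ disjoint from $D_\gamma$, showing $D_\gamma$ is nowhere stationary while $|D_\gamma|=\kappa$. Using $(2^\kappa)^V=\kappa^{++}$ and (WLOG) $f_\tau(\alpha)\geq 2$ on a cofinal part of $D_\gamma$, we have $\left|\prod_{\alpha\in D_\gamma} f_\tau(\alpha)\right|^V=\kappa^{++}$, so I would fix in $V$ a bijection $\Phi_\gamma\colon \prod_{\alpha\in D_\gamma} f_\tau(\alpha)\to (\kappa^{++})^V$. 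Then define $F\in V[G]$ by $F(\gamma)=\Phi_\gamma\bigl((\cup G)\uhr D_\gamma\bigr)$.

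To argue that $F$ is surjective, I would run a density argument. Given $\beta<(\kappa^{++})^V$ and a condition $p\in\po^\tau$, let $h_{\gamma,\beta}=\Phi_\gamma^{-1}(\beta)$. Since $\supp(p)$ is nowhere stationary, its complement contains a club in $V$, so by the cofinality of $\mathcal{C}$ there exists $\gamma<\kappa^+$ with $C_\gamma\cap\supp(p)=\emptyset$, hence $D_\gamma\cap\supp(p)=\emptyset$. Extend $p$ to $q=p\cup\{(\alpha,h_{\gamma,\beta}(\alpha))\colon \alpha\in D_\gamma\}$; its support is the union of two nowhere-stationary sets and so is itself nowhere stationary, making $q\in\po^\tau$ a legitimate extension of $p$. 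Moreover, $q\Vdash (\cup\name{G})\uhr D_\gamma=h_{\gamma,\beta}$, hence $q\Vdash F(\check\gamma)=\check\beta$. By density and genericity, every $\beta<(\kappa^{++})^V$ lies in the range of $F$, as required.

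The main obstacle is the combinatorial construction of the sets $D_\gamma$: one must carefully verify that the recursive ``skipping'' definition yields a set of cardinality $\kappa$ that is genuinely nowhere stationary in every $\alpha\in I$, not merely nonstationary in $\kappa$. The key points are that the ``next inaccessible above'' step is always well-defined (since $\kappa$, being measurable, is a limit of inaccessibles) and that the deliberate insertion of an inaccessible strictly between $\sup\{e^\gamma_\eta\colon \eta\leq \xi\}$ and $e^\gamma_{\xi+1}$ ensures limit points of $D_\gamma$ lie outside $D_\gamma$, giving the requisite ``closure minus $D_\gamma$'' club at every level $\alpha\in I$. Once this combinatorial step is in place, the density argument described above gives the collapse of $(\kappa^{++})^V$.
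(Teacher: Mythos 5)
Your proposal is correct and follows essentially the same route as the paper: a density argument showing that for every target object and every condition $p$, some member of $\mathcal{C}$ avoids $\supp(p)$ and the generic can be made to write the target on a nowhere-stationary subset of it, yielding a surjection from a set of size $\kappa^{+}$ onto a set of size $(\kappa^{++})^V$. The only differences are cosmetic — the paper codes subsets of $\kappa$ by a $0/1$ pattern on successor points of $C$, whereas you code ordinals below $\kappa^{++}$ via a ground-model bijection with $\prod_{\alpha\in D_\gamma}f_\tau(\alpha)$, and your explicit choice of $D_\gamma\subseteq C_\gamma\cap I$ is in fact slightly more careful about landing inside the inaccessibles than the paper's use of successor points.
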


\begin{proof}
	Let $\po$ be the splitting forcing, and let $G \subseteq \po$ be generic over $V$. We claim that in $V[G]$ there exists a surjection from $\mathcal{C}$ to $(\power(\kappa))^V$.
	
	We say that for $C \in \mathcal{C}$ and $A \subseteq \kappa$ in $V$, the generic filter $G$ codes $A$ via $C$ if, for every $i < \kappa$,
	$$
	i \in A \iff (\cup G)(C^+_i) = 1
	$$
	where $C^+_i$ denotes the $i$-th successor point of $C$. 
	
	It suffices to prove that for every $A\subseteq \kappa$ in $V$ there exists $C\in \mathcal{C}$ such that $G$ codes $A$ via $C$.

	Indeed, fix $A \subseteq \kappa$ in $V$ and a condition $p \in \po$. By assumption, there is a club $C \in \mathcal{C}$ disjoint from $\supp(p)$. The sequence $\la C^+_i : i < \kappa \ra$ is nowhere stationary, so we may extend $p$ to a condition $q \in \po$ as follows:
	$$
	q = p \cup \{ \la C^+_i, 1 \ra : i \in A \} \cup \{ \la C^+_i, 0 \ra : i \notin A \}.
	$$
	By density, there exists $q \in G$ forcing that $\name{G}$ codes $A$ via some $C \in \mathcal{C}$.
\end{proof}

Claim \ref{Claim: Splitting might collapse cardinals} naturally leads to the following question.

\begin{question}
	Is there a variant of the splitting forcing that preserves $\kappa^{++}$ even when $2^\kappa = \kappa^{++}$ in $V$?
\end{question}

Finally, the following central question, raised by Friedman and Magidor in \cite{FriMag09}, has remained open ever since:

\begin{question}\label{Fine, normal ultrafulters}
	Is there a Friedman–Magidor theorem for fine, normal measures on $\power_\kappa(\lambda)$ for $\lambda>\kappa$?
\end{question}

\bibliographystyle{plain}
\bibliography{Bibliography}

\end{document}